\newtheorem{theorem}{Theorem}[section]
\newtheorem{lemma}[theorem]{Lemma}
\newtheorem{prop}[theorem]{Proposition}
\newtheorem{coro}[theorem]{Corollary}
\theoremstyle{definition}
\newtheorem{definition}[theorem]{Definition}
\newcommand{\Ec}{\mathcal{E}}
\newcommand{\setdef}{\ \vert \ }
\newcommand{\psh}{{\rm PSH}}
\newcommand{\vol}{{\rm Vol}}
\newcommand{\ddbar}{\partial\bar\partial}
\newcommand{\AM}{{\rm I}}
\newcommand{\PSH}{{\rm PSH}}
\newcommand{\Amp}{{\rm Amp}}
\newcommand{\id}{\mathbbm{1}}
\begin{document}

\title{$L^1$ metric geometry of big cohomology classes}

\author{Tam\'as Darvas, Eleonora Di Nezza, Chinh H. Lu}
\date{\emph{\footnotesize{Dedicated to Jean-Pierre Demailly on the occasion of his 60th birthday.}}}

\maketitle

\begin{abstract} Suppose $(X,\omega)$ is a compact K\"ahler manifold of dimension $n$, and $\theta$ is closed $(1,1)$-form representing a big cohomology class. We introduce a metric $d_1$ on the finite energy space $\mathcal{E}^1(X,\theta)$, making it a complete geodesic metric space. This construction is potentially more rigid compared to its analog from the K\"ahler case, as it only relies on pluripotential theory, with no reference to infinite dimensional $L^1$ Finsler geometry. Lastly, by adapting the results of Ross and Witt Nystr\"om to the big case, we show that one can construct geodesic rays in this space in a flexible manner. 
\end{abstract}

\section{Introduction}

Let $(X,\omega)$ be a K\"ahler manifold of complex dimension $n$. Going back to Yau's solution of the Calabi conjecture \cite{Yau78}, the study of complex Monge-Amp\`ere equations on $X$ has received a lot of attention. Several problems in K\"ahler geometry, related to canonical metrics,  boil down to solving an equation of complex Monge-Amp\`ere type. When trying to find weak solutions for such equations, one is naturally led to the space $\mathcal E^1(X,\omega)$, introduced by Guedj and Zeriahi \cite{GZ07} building on previous constructions of Cegrell in the local case \cite{Ce98}. Later, in the work of the first named author \cite{Dar14,Dar15} it was discovered that $\mathcal E^1(X,\omega)$ has a natural metric geometry, arising as the completion of a certain $L^1$ Finsler metric on the space of smooth K\"ahler potentials, an open subset of $C^\infty(X)$, reminiscent of the $L^2$ Riemannian metric of Mabuchi-Semmes-Donaldson (\cite{Ma87,Se92,Do99}). The exploration of the space $\Ec^1(X,\omega)$ and its metric structure led to numerous applications concerning existence of K\"ahler-Einstein and constant scalar curvature metrics (see \cite{BBEGZ11,BBGZ13,BBJ15,BDL16,CC18, DR17,Dar16,DNG16} as well as references in the recent survey \cite{Dar17}). 

For the rest of the paper we consider $\theta$, a closed $(1,1)$-form representing a big cohomology class. As pointed out in \cite{BEGZ10}, one can still consider the space $\mathcal E^1(X,\theta)$, in hopes of finding weak solutions to equations of  complex Monge-Amp\`ere type in the big context. However we can not recover this space using infinite dimensional $L^1$ Finsler geometry, as there is no Fr\'echet manifold readily available in this setting to replace the role of the space of K\"ahler potentials. In our first main result we show that this difficulty can be overcome, by defining the metric structure of $\mathcal E^1(X,\theta)$ directly, using only pluripotential theory, bypassing the Finsler geometry (compare with \cite{Dar16, DNG16} that deal with an intermediate particular case). 
The resulting space still enjoys the same properties as its analog the K\"ahler case, and we expect our construction to have applications in the study of complex Monge-Amp\`ere equations in the context of big cohomology classes. 

Let us briefly introduce the main terminology and concepts, leaving the details to the preliminaries section and thereafter. Roughly speaking, $\mathcal E^1(X,\theta)\subset \textup{PSH}(X,\theta)$ is the set of potentials whose Monge-Amp\`ere energy $I$ is finite. Given $u,v \in \mathcal E^1(X,\theta)$, it has been shown in \cite[Theorem 2.10]{DDL16} that $P(u,v)$, the largest $\theta$-psh function lying below $\min(u,v)$, belongs to $\Ec^1(X,\theta)$. Consequently, we can define $d_1(u,v)$ as the following finite quantity:
\begin{equation}\label{eq: d1_intr_def}
d_1(u,v)=\AM(u) + \AM(v) - 2\AM(P(u,v)).
\end{equation}
Thus defined, $d_1$ is symmetric, and non-degeneracy is a simple consequence of the domination principle. The main difficulty is to show that the triangle inequality also holds. We accomplish this, and we are also able to show that the resulting metric space $(\mathcal E^1(X,\theta),d_1)$ is complete, with metric geodesics running between any two points. These geodesic segments will be constructed as a Perron envelope, generalizing an observation of Berndtsson from the K\"ahler case \cite[Section 2]{Bern}. We record all of this in our first main theorem:

\begin{theorem} $(\mathcal E^1(X,\theta),d_1)$ is a complete geodesic metric space.
\end{theorem}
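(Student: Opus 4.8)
The plan is to establish the four assertions in sequence: symmetry (immediate from the definition), non-degeneracy, the triangle inequality, and finally completeness together with the existence of geodesics. Symmetry of $d_1$ is clear from \eqref{eq: d1_intr_def} since $P(u,v)=P(v,u)$. Non-degeneracy follows from the domination principle: if $d_1(u,v)=0$ then $\AM(P(u,v))=\tfrac12(\AM(u)+\AM(v))$, and by concavity/monotonicity properties of $\AM$ along with $P(u,v)\le \min(u,v)$ this forces $P(u,v)=u=v$ a.e., hence everywhere. So the real content is the triangle inequality and the metric completeness/geodesic statement.

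For the triangle inequality $d_1(u,w)\le d_1(u,v)+d_1(v,w)$, the key is a good understanding of the functional $\AM$ and the rooftop operator $P$. I would first record the cocycle/monotonicity identities for $\AM$: for $u\ge v$ in $\mathcal E^1(X,\theta)$ one has $\AM(u)-\AM(v)=\tfrac{1}{n+1}\sum_{j=0}^n\int_X (u-v)\,\theta_u^j\wedge\theta_v^{n-j}$, where $\theta_u=\theta+\ddbar u$ and the Monge--Amp\`ere products are taken in the non-pluripolar sense; in particular $\AM$ is monotone and concave along affine segments. The triangle inequality should reduce to an inequality of the form $\AM(u)+\AM(w)-2\AM(P(u,w))\le \AM(u)+\AM(w)-2\AM(P(u,v))+2\AM(v)-2\AM(P(v,w))$, i.e. $\AM(P(u,v))+\AM(P(v,w))\le \AM(v)+\AM(P(u,w))$. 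The natural route is to compare Monge--Amp\`ere measures: one shows that $P(u,v)+P(v,w)-v \le P(u,w)$ pointwise where things make sense, or more robustly that the relevant energy differences can be bounded by comparing $\theta_{P(u,v)}^n$ and $\theta_{P(u,w)}^n$ on the contact sets, using the fact (from pluripotential theory of big classes, e.g. orthogonality relations in \cite{BEGZ10, DDL16}) that $\theta_{P(u,v)}^n$ is concentrated on $\{P(u,v)=u\}\cup\{P(u,v)=v\}$. This comparison-of-measures argument, carried out carefully in the big setting where one must control the non-pluripolar products and the minimal singularity locus, is the main obstacle; the K\"ahler-case proof of \cite{Dar15} uses smooth geodesics and Finsler length, which are unavailable here, so the whole argument must be recast in purely pluripotential-theoretic terms.

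Once $d_1$ is a metric, I would address completeness and geodesics simultaneously. Define, for $u_0,u_1\in\mathcal E^1(X,\theta)$, the candidate geodesic $t\mapsto u_t$ as the Perron envelope
\[
u_t=\sup\bigl\{\, v_t \ :\ (v_s)_{s\in[0,1]}\ \theta\text{-psh subgeodesic},\ \limsup_{s\to 0}v_s\le u_0,\ \limsup_{s\to 1}v_s\le u_1 \,\bigr\},
\]
i.e. the usual psh envelope on $X\times\{0<\operatorname{Re} z<1\}$ with boundary data $u_0,u_1$, taking upper semicontinuous regularization; this generalizes Berndtsson's construction \cite[Section 2]{Bern}. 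One then shows $u_t\in\mathcal E^1(X,\theta)$ for each $t$ (using that $u_t\ge (1-t)u_0'+tu_1'$ for suitable minorants and the energy estimates from \cite{DDL16}), that $t\mapsto \AM(u_t)$ is affine (this is where the maximality of the envelope and the vanishing of the Monge--Amp\`ere measure of $u_t$ in the interior, in the $(n+1)$-dimensional sense, enter), and that $t\mapsto u_t$ is a $d_1$-geodesic, i.e. $d_1(u_s,u_t)=|t-s|\,d_1(u_0,u_1)$. The affineness of $\AM$ along $u_t$ combined with the identity $d_1(u_s,u_t)=\AM(u_s)+\AM(u_t)-2\AM(P(u_s,u_t))$ and a computation that $P(u_s,u_t)$ again lies along (a reparametrization related to) the family reduces this to the triangle-inequality machinery. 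For completeness, given a $d_1$-Cauchy sequence $u_j$, I would first pass to a decreasing-type regularization: show the sequence is bounded in energy, use that $d_1$ is comparable to the $L^1$-type quantity $\int_X|u_j-u_k|(\theta_{u_j}^n+\theta_{u_k}^n+\theta^n)$ (monotonicity of $\AM$ plus the rooftop inequalities give two-sided bounds), extract an $L^1(\theta^n)$-convergent subsequence with limit $u$, show $u\in\mathcal E^1(X,\theta)$ by lower semicontinuity of energy and the uniform energy bound, and finally upgrade $L^1$ convergence to $d_1$ convergence using the Cauchy property together with continuity of $\AM$ along monotone sequences. I expect the completeness argument to be technically the heaviest after the triangle inequality, because controlling $\AM$ under weak limits in a big class requires the finite-energy compactness results of \cite{BEGZ10, DDL16} rather than the cleaner K\"ahler statements.
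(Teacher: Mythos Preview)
Your reduction of the triangle inequality to the estimate
\[
\AM(P(u,v))+\AM(P(v,w))\le \AM(v)+\AM(P(u,w))
\]
is correct, but neither of your two suggested routes actually closes the gap. The pointwise inequality $P(u,v)+P(v,w)-v\le \min(u,w)$ does hold (since $P(u,v)\le v$ and $P(v,w)\le w$, and symmetrically), but the left-hand side is \emph{not} $\theta$-psh in general, so you cannot conclude that it lies below $P(u,w)$; and even if you could, $\AM$ is not additive, so a pointwise inequality $a+b-c\le d$ does not yield $\AM(a)+\AM(b)-\AM(c)\le\AM(d)$. Your alternative suggestion of comparing $\theta_{P(u,v)}^n$ and $\theta_{P(u,w)}^n$ on contact sets is in the right spirit but far too vague to constitute a proof. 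The paper's actual mechanism is a differentiation formula: for $u,v$ with minimal singularities and $\varphi_t:=P((1-t)u+tv,v)$, one computes $\frac{d}{dt}\AM(\varphi_t)=\int_X(v-\min(u,v))\,\theta_{\varphi_t}^n$ using concentration of $\theta_{\varphi_t}^n$ on the contact set. Integrating this and comparing with the analogous formula for $t\mapsto\AM((1-t)u+t\max(u,v))$ via the plurifine locality gives $d_1(\max(u,v),u)\ge d_1(v,P(u,v))$, and from this one deduces the contraction property $d_1(P(u,\varphi),P(v,\varphi))\le d_1(u,v)$, which is exactly the missing inequality. This differentiation step is the genuine new idea you are missing.

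Two further points. For the geodesic property, your claim that ``$P(u_s,u_t)$ again lies along (a reparametrization related to) the family'' is not correct and not how the argument goes; instead one compares the weak geodesic $t\mapsto\varphi_t$ joining $\varphi_0,\varphi_1$ with the weak geodesic $t\mapsto\psi_t$ joining $\varphi_0$ and $P(\varphi_0,\varphi_1)$, observes $\psi_t\le P(\varphi_0,\varphi_t)$, and uses linearity of $\AM$ along both to get $d_1(\varphi_0,\varphi_t)\le t\,d_1(\varphi_0,\varphi_1)$. For completeness, your ``extract an $L^1$ limit and upgrade'' sketch is plausible but the paper's argument is more direct and avoids the upgrading issue: from a rapidly Cauchy sequence $(\varphi_j)$ one forms the iterated rooftop envelopes $\psi_{j,k}:=P(\varphi_j,\ldots,\varphi_k)$, shows via the Pythagorean identity that $d_1(\varphi_j,\psi_{j,k})\le 2^{-j+1}$ uniformly in $k$, lets $k\to\infty$ to get $\psi_j\in\mathcal E^1$, and then observes that $\psi_j$ is increasing with limit in $\mathcal E^1$.
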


As alluded to above, in the K\"ahler case the $d_1$ metric is introduced quite differently. In that case, one puts an $L^1$ Finlser metric on the Fr\'echet manifold of smooth K\"ahler potentials and the completion of its path length metric will coincide with $(\mathcal E^1(X,\theta),d_1)$ \cite{Dar15}. In the K\"ahler case, formula \eqref{eq: d1_intr_def} is a result of a theorem (\cite[Corollary 4.14]{Dar15}), but in the big case we take it as our definition for $d_1$!

Though there is no apparent connection with infinite dimensional $L^1$ Finsler geometry in the big case. By the double estimate below, we will still refer to $d_1$ as the $L^1$ metric of $\mathcal E^1(X,\theta)$. Indeed, by this double inequality, it seems that one should think of $d_1$ as a kind of $L^1$ metric with ``moving measures'':
$$d_1(u,v) \leq \int_X |u-v| \theta_u^n + \int_X |u-v| \theta_v^n \leq 3 \cdot 2^{n} (n+1) d_1(u,v), \ \ \ \ u,v \in \mathcal E^1(X,\theta).$$
As a consequence of this inequality, the expression in the middle satisfies a quasi-triangle inequality. We note that this is true for even more general such expressions, as recently proved using completely different methods in \cite{GLZ17}.

To motivate our second main result, a short review of  historical developments is in order. The study of the geometry of the space of K\"ahler metrics was (and still is) closely connected with the uniqueness and existence of canonical K\"ahler metrics. 
The compactness principle of $\mathcal E^1(X,\theta)$ from \cite{BBEGZ11}, and the exploration of the $d_1$-geometry of this space \cite{Dar15} led to many recent advances in this direction \cite{BBJ15,DR17,BDL16}. Going back to earlier developments, Donaldson conjectured that a constant scalar curvature metric exists in a K\"ahler class if and only if the K-energy has certain growth along the geodesic rays of this space \cite{Do99}. This is closely related to the notion of K-stability and is the focus of intense research to this day. Motivated by this picture, there is special interest in regularity of geodesic segments and rays, as well as their geometric significance (see \cite{Ch00, AT03, PS10, CT08, CS14, RWN} to name only very few works in a fast expanding literature). Following the appearance of \cite{Dar15}, it became apparent that a weak version of Donaldson's conjectural picture generalizes to the $d_1$-metric completion. Using a mixture of novel PDE techniques and the method of \cite{DH17}, this latter conjecture was very recently fully addressed by Chen and Cheng \cite{CC18}.  We expect that results in the above papers will eventually find generalizations to the big setting. 

Given their importance in the above mentioned applications, we are interested to see how one can construct weak geodesic rays inside $(\mathcal E^1(X,\theta),d_1)$, with the hopes of using them in later investigations involving big cohomology classes. To this end, we point out below that the construction of Ross and Witt Nystr\"om \cite{RWN} not only generalizes to the big case, but it can be shown that their very flexible method gives all possible weak geodesic rays (with minimal singularity) in a unique manner. 

We skim over the main aspects of the construction.
Suppose $\phi \in \psh(X,\theta)$ has minimal singularity. Roughly speaking, we say that $\Bbb R \ni \tau \to \psi_\tau \in \textup{PSH}(X,\theta)$ is a test curve, if it is $\tau$-concave,  $\psi_\tau = \phi \in \textup{PSH}(X,\theta)$ for all $\tau \leq - C_\psi$, and  $\psi_\tau = -\infty$ for all $\tau \geq C_\psi$, for some constant $C_\psi >0$. Additionally a test curve is maximal, if (using the notation of \cite[Section 1]{DDL17}, see \eqref{eq: P_sing_def} below) it satisfies:
$$P[\psi_\tau](\phi)=\psi_\tau, \ \ \tau \in \Bbb R.$$
As opposed to weak geodesic rays, test curves can be easily constructed, and they can also be conveniently maximized (Proposition \ref{prop: maximization}). Roughly speaking,  our second main result points out a duality between rays and maximal test curves, via the partial Legendre transform:

\begin{theorem} The correspondence $\psi \to \check \psi$ gives a bijective map between maximal $\tau$-usc test curves $\tau \to \psi_\tau$ and weak geodesic rays with minimal singularity type $t \to u_t$. The inverse of this map is $u \to \hat u$.
\end{theorem}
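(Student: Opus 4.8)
The plan is to establish the bijection by showing that the partial Legendre transform $\psi \mapsto \check\psi$, defined roughly by $\check\psi_t(x) = \sup_{\tau \in \mathbb{R}}(\psi_\tau(x) + t\tau)$, sends maximal $\tau$-usc test curves to weak geodesic rays with minimal singularities, and that the inverse Legendre transform $u \mapsto \hat u$, defined by $\hat u_\tau(x) = \inf_{t \geq 0}(u_t(x) - t\tau)$, does the reverse. So the argument splits into four parts: (i) if $\psi$ is a maximal $\tau$-usc test curve then $\check\psi$ is a weak geodesic ray with minimal singularities; (ii) if $u$ is such a ray then $\hat u$ is a maximal $\tau$-usc test curve; (iii) $\widehat{\check\psi} = \psi$ for every maximal test curve; and (iv) $\check{\hat u} = u$ for every such ray.

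First I would handle the easier direction, (ii). Given a weak geodesic ray $t \mapsto u_t$ with minimal singularities and $u_0 = \phi$, the function $(t,x) \mapsto u_t(x)$ is, after the change of variable $t = \log|w|$ on a strip, $\omega$-psh in the total space, hence its partial Legendre transform in the $t$-variable is concave in $\tau$ and $\theta$-psh in $x$ for each fixed $\tau$; this is a standard feature of the Legendre transform of a convex/psh family (Kiselman-type minimum principle). The linear growth of $t \mapsto d_1(u_0, u_t)$, equivalently the bound $\sup_X (u_t - \phi) \leq C t$ plus a lower slope bound, forces $\hat u_\tau = \phi$ for $\tau$ very negative and $\hat u_\tau \equiv -\infty$ for $\tau$ large, so $\hat u$ is a test curve; one then checks $\tau$-usc-ness and the maximality condition $P[\hat u_\tau](\phi) = \hat u_\tau$ — the latter should follow because $\hat u_\tau$ is already an infimum of $\theta$-psh functions dominated by $\phi$ and the geodesic is a Perron envelope, so no further maximization is possible.

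For (i), starting from a maximal $\tau$-usc test curve, I would show $\check\psi$ is a subgeodesic by the same Legendre-transform convexity/psh-regularity argument run backwards, that it has minimal singularities (using $\psi_\tau = \phi$ for $\tau \leq -C_\psi$ to control $\check\psi_t$ from below by $\phi - C_\psi t$, and $\psi_\tau = -\infty$ for $\tau \geq C_\psi$ to control it from above by $\phi + C_\psi t$ up to a bounded error), and — the crucial point — that it is actually a \emph{geodesic}, i.e. the Perron envelope of its endpoint data, not merely a subgeodesic. This is where the maximality hypothesis $P[\psi_\tau](\phi) = \psi_\tau$ is used: it guarantees that $\check\psi$ is "as large as possible", matching the Perron construction. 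I expect this is the main obstacle, since one must show that the double transform applied to any competing subgeodesic cannot exceed $\check\psi$, which amounts to the reciprocal inequality $\check{\hat v} \leq$ (the geodesic) for subgeodesics $v$, combined with monotonicity of the transforms. The inversion identities (iii) and (iv) then follow from the Fenchel–Moreau biduality for closed concave functions: $\widehat{\check\psi} = \psi$ because $\psi$ is $\tau$-usc and concave, so it equals its own biconjugate (after checking that the $\theta$-psh upper envelope operation in the $x$-variable does not spoil equality, again using maximality); and $\check{\hat u} = u$ because $u$ is already a geodesic, hence recovered as the Legendre transform of its own transform. I would organize these as a short sequence of lemmas — one on the Legendre transform preserving the relevant regularity, one identifying the envelopes, one on the growth estimates — and assemble them at the end. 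Throughout, the results of \cite{DDL16} ensuring $P(u,v) \in \mathcal{E}^1(X,\theta)$ and the machinery of $P[\cdot]$ from \cite{DDL17} are what make the envelope manipulations legitimate in the big setting.
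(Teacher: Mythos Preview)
Your outline correctly identifies the four-part structure and rightly singles out step (i) --- showing that $\check\psi$ is a \emph{geodesic} ray and not merely a subgeodesic --- as the crux. But your proposed mechanism for this step (``the double transform applied to any competing subgeodesic cannot exceed $\check\psi$'') is too vague to constitute a plan. For a \emph{ray} there is no second boundary datum, so the phrase ``competing subgeodesic'' has no immediate meaning; and if you restrict to segments $[0,s]$ with endpoints $\phi$ and $\check\psi_s$, it is not clear how the finite-interval Legendre transform of a competitor relates to the test curve $\tau\mapsto\psi_\tau$, nor how maximality would enter that comparison.

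The paper supplies a concrete device your plan is missing. For each $D>0$ one forms the envelope $t\mapsto w_t^D$ of all subgeodesic rays lying below $\min(\phi+C_\psi t,\,\check\psi_t+D)$. A separate balayage lemma (Lemma~\ref{lem: balyage}) shows that on every finite interval $[0,s]$ this $w_t^D$ already coincides with the geodesic segment joining its endpoints once $D$ is large, and that the increasing limit $w_t=\lim_{D\to\infty} w_t^D$ is therefore a genuine weak geodesic ray. Taking Legendre transforms and using Kiselman's principle gives $\hat w^D_\tau \leq P(\phi,\psi_\tau+D)$, and one checks the reverse inequality directly, so that $w_t^D=\sup_\tau\big(P(\phi,\psi_\tau+D)+t\tau\big)$; letting $D\to\infty$ and invoking maximality $P[\psi_\tau](\phi)=\psi_\tau$ yields $w_t=\check\psi_t$. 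This truncation-plus-balayage construction is the missing key idea; without it you have no way to produce a geodesic ray to compare $\check\psi$ against.

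A smaller gap occurs in (ii): the assertion that $\hat u_\tau$ is maximal ``because it is already an infimum of $\theta$-psh functions dominated by $\phi$ and the geodesic is a Perron envelope'' is not an argument for $P[\hat u_\tau](\phi)=\hat u_\tau$. The paper appeals to \cite[Lemma 3.17]{DDL16}, which is a genuine structural input about potentials arising along weak geodesics, not a formality.
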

Here $\check \psi$ and $\hat u$ represent the partial (inverse) Legendre transforms of $\psi$ and $u$ respectively, defined by:
$$\check \psi_t := \sup_{\tau \in \Bbb R}(u_\tau + t\tau), \ \ \ \hat u_\tau := \inf_{t \geq 0} (u_t - t\tau).$$
As a corollary we recover the main analytic result of \cite{RWN} in the big context:
\begin{coro}\label{cor: main_from test curve to ray} Let $\tau\rightarrow \psi_\tau$ be a test curve such that $\psi_{-\infty} =\phi$.
Define $$w_t= \sup_{\tau\in \mathbb{R}} (P[\psi_\tau](\phi) + t\tau), \ \ t \geq 0.$$ Then the curve $t\rightarrow w_t$ is a weak geodesic ray, with minimal singularity, emanating from $\phi$.
\end{coro}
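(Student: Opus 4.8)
The plan is to deduce the corollary directly from our second main theorem (the bijection $\psi \mapsto \check\psi$ between maximal $\tau$-usc test curves and weak geodesic rays with minimal singularities), together with Proposition \ref{prop: maximization}; the only real task is to recognize $t \mapsto w_t$ as the partial Legendre transform of an appropriate \emph{maximal} test curve. To this end I would set
$$\hat\psi_\tau := P[\psi_\tau](\phi), \qquad \tau \in \mathbb{R},$$
passing, if necessary, to the $\tau$-usc regularization of $\hat\psi$ at the single point $\tau = C_\psi$.

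First I would check that $\tau \mapsto \hat\psi_\tau$ is again a test curve, now maximal, with $\hat\psi_{-\infty} = \phi$. That $\hat\psi$ is a maximal $\tau$-usc test curve is precisely the content of Proposition \ref{prop: maximization}: the maximization operator $u \mapsto P[u](\phi)$ preserves $\tau$-concavity, keeps the regime $\hat\psi_\tau \equiv -\infty$ for $\tau \geq C_\psi$, and forces the fixed-point identity $P[\hat\psi_\tau](\phi) = \hat\psi_\tau$. For the left endpoint, note that $\psi_\tau = \phi$ for all $\tau \leq -C_\psi$, so there $\hat\psi_\tau = P[\phi](\phi)$; since $\phi$ has minimal singularity, $\phi$ itself is the largest $\theta$-psh competitor in the envelope defining $P[\phi](\phi)$ (equivalently, this is an instance of the domination principle), whence $P[\phi](\phi) = \phi$. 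Thus $\hat\psi_{-\infty} = \phi$, and the data of $\hat\psi$ are exactly those required by the hypotheses of our second main theorem.

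Finally I would invoke the bijection: by the second main theorem, $t \mapsto \check{\hat\psi}_t$ is a weak geodesic ray with minimal singularities, and
$$\check{\hat\psi}_t = \sup_{\tau \in \mathbb{R}}\big(\hat\psi_\tau + t\tau\big) = \sup_{\tau \in \mathbb{R}}\big(P[\psi_\tau](\phi) + t\tau\big) = w_t, \qquad t \geq 0,$$
where the middle equality is unaffected by the $\tau$-usc regularization, which modifies $\hat\psi$ on at most one value of $\tau$ and hence leaves the supremum unchanged. To see that $w$ emanates from $\phi$, observe that $P[\psi_\tau](\phi) \leq \phi$ for every $\tau$, while for $\tau \leq -C_\psi$ we have $\psi_\tau = \phi$ and therefore $P[\psi_\tau](\phi) = P[\phi](\phi) = \phi$; hence $w_0 = \sup_\tau P[\psi_\tau](\phi) = \phi$. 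Since essentially all of the work is imported from the second main theorem and Proposition \ref{prop: maximization}, I do not expect a serious obstacle here; the only two points needing genuine care are that maximization preserves the endpoint conditions — in particular that the left endpoint remains $\phi$, which is exactly where the minimal singularity assumption on $\phi$ enters — and the bookkeeping of the $\tau$-usc regularization, which must be checked to affect neither the supremum defining $w_t$ nor the boundary data.
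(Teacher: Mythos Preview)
Your proposal is correct and follows essentially the same route as the paper: recognize $w_t$ as the inverse Legendre transform of the (regularized) maximized test curve $\tau \mapsto P[\psi_\tau](\phi)$, invoke Proposition \ref{prop: maximization} for maximality, and apply the main bijection theorem, with Lemma \ref{lem: uscify} guaranteeing that the single-point $\tau$-usc regularization leaves the supremum unchanged. One cosmetic slip: the regularization point is $\tau^+_\psi$ rather than $C_\psi$, and your notation $\hat\psi$ collides with the paper's use of $\hat{\;}$ for the Legendre transform, but neither affects the argument.
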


\paragraph{Organization.} 
Our notation and terminology carry over from \cite{DDL16} and \cite{DDL17}. In Section \ref{preliminaries} we review some background on the Monge-Amp\`ere theory in big cohomology classes. Section \ref{section:  distance d1} is devoted to the proof of Theorem 1.1. In Section \ref{section: rays} we adapt the concepts of \cite{RWN} to the big context and prove Theorem 1.2 and Corollary 1.3.
 \medskip
 
 \paragraph{Acknowledgement.} We thank the referee for careful reading and useful comments that significantly improved the presentation.

\section{Preliminaries}\label{preliminaries}
We lay down our notation, and we review several basic results from the Monge-Amp\`ere theory and geodesics in big cohomology classes.
Let $X$ be a compact K\"ahler manifold of dimension $n$ and fix $\theta$, a closed smooth real $(1,1)$-form on $X$.

A potential $u\in L^1(X,\omega^n)$  is quasi-plurisubharmonic (quasi-psh for short) if near every point $x \in X$ there exists a coordinate patch $V \subset X$, identifying $x \in X$ with $0 \in \Bbb C^n$, such that  $u|_V$ is the difference of a plurisubharmonic (psh)  and a smooth function. Additionally, $u$ is called $\theta$-plurisubharmonic ($\theta$-psh for short) if $\theta_u :=\theta+i\ddbar u \geq 0$ in the sense of currents. The set of all $\theta$-psh functions  is denoted by $\textup{PSH}(X,\theta)$. In our convention, the potential equal to $-\infty$ everywhere on $X$ is an element of $\textup{PSH}(X,\theta)$.

We say that $\{\theta \}$ is \emph{pseudoeffective} if $\textup{PSH}(X,\theta)$ is non-empty. Along these lines, $\{\theta\}$ is \emph{big} if $\textup{PSH}(X,\theta-\varepsilon \omega)$ is non-empty for some $\varepsilon >0$.  

In case $\{\theta\}$ is big, the \emph{ample locus} $\textup{Amp}(\{\theta\}) \subset X$ is the open dense set of points $x \in X$ such that there exists $u \in \textup{PSH}(X,\omega)$, smooth in a neigborhood of $x$, and satisfying $\theta + i\ddbar u > \varepsilon \omega$ in the same neighborhood, for some $\varepsilon(x)>0$. 

Let $u \in \textup{PSH}(X,\theta)$. Given that locally $u$ can be written as a sum of a psh function and a smooth function we obtain that 
$$u(x) = \lim_{r \to 0}\frac{1}{d\mu(B(x,r))} \int_{B(x,r)}u(y) d\mu(y), \ \ x \in X,$$
where $B(x,r)$ is a coordinate ball of radius $r>0$ centered at $x \in X$, and $d\mu$ is the Lebesque measure (see \cite[Theorem 1.2.3(iv)]{Bl97}). As a consequence of this we immediately obtain the following:
\begin{lemma}\label{lem: aeineq_everywhere} Let $u, v \in \textup{PSH}(X,\theta)$ such that $u \leq v$ a.e. on $X$. Then $u \leq v$ everywhere on $X$.
\end{lemma}

Given $u_1,\ldots,u_p \in \textup{PSH}(X,\theta)$, we recall the definition of the current $\theta_{u_1}\wedge \ldots \wedge \theta_{u_p}$ from \cite[Section 1]{BEGZ10}.  This generalizes a construction of Bedford and Taylor \cite{BT76, BT87} applicable for bounded potentials. Indeed, in a small enough coordinate patch $V \subset X$ we can write that $\theta := i\partial \bar \partial \phi$ for some $\phi \in C^\infty(X)$. Then in this neighborhood we introduce:
\begin{equation}\label{eq: non-pluripolar_def}
\theta_{u_1}\wedge \ldots \wedge \theta_{u_p}|_V:= \lim_{k \to \infty} \mathbbm{1}_{V \cap \{\phi + u_1 > -k\} \cap \ldots\cap \{\phi + u_p > -k\}}\theta_{\max(\phi + u_1, - k)}\wedge \ldots \wedge \theta_{\max(\phi + u_p,-k)},
\end{equation}

In \cite[Section 1]{BEGZ10} it is  argued that this limit of currents is well-defined, invariant under change of coordinates  and it is a  closed positive $(p,p)$-current which does not charge pluripolar sets. For a $\theta$-psh function $u$, the \emph{non-pluripolar complex Monge-Amp{\`e}re measure} of $u$ is simply $\theta_u^n:=\theta_u\wedge \ldots\wedge \theta_u$.  

Many properties of  $\theta_{u_1}\wedge \ldots \wedge \theta_{u_p}$ carry over from Bedford--Taylor theory \cite{BT76, BT87}  directly. Here we only highlight the ones that will come up the most in this work, for example locality of \eqref{eq: non-pluripolar_def} with respect to the plurfine topology. This latter topology is the coarsest topology making local plurisubharmonic functions continuous on $X$, and it is easy to see that it refines the usual Euclidean topology. Moreover, from \cite[Proposition 1.4]{BEGZ10} it follows that the construction  \eqref{eq: non-pluripolar_def} is \emph{local in the plurifine topology}:
\begin{lemma}\label{lem: plurifine_prop} If $u_j,v_j \in \textup{PSH}(X,\theta)$ such that 
and $u_j = v_j$ on a plurifine open set $O \subset X$. Then
$$\mathbbm{1}_{O} \theta_{u_1}\wedge \ldots \wedge \theta_{u_p}=\mathbbm{1}_{O}\theta_{v_1}\wedge \ldots \wedge \theta_{v_p}.$$
\end{lemma}

Moreover, as pointed out by Guedj-Zeriahi \cite[Corollary 2.8]{GZ05}, every element $u \in \textup{PSH}(X,\theta)$ is \emph{quasi-continuous} in the sense that for any $\varepsilon>0$ it is possible to find a Euclidean open set $O \subset X$ such that $u|_{O}$ is continuous and $\textup{Cap}_\theta(X\setminus O) \leq \varepsilon$. By $\textup{Cap}_\theta(\cdot)$ we mean the Monge--Amp\`ere capacity defined in \cite[Section 4.1]{BEGZ10}. We note that by \cite[Theorem 2.8]{DDL16} all notions of Monge--Amp\`ere capacity are (essentially) independent of the choice of form $\theta$. 

Related to the above, we say that a sequence  of functions $\{f_j\}_j$ \emph{converges in capacity} to a function $f$ on $X$ if $\lim_{\varepsilon \to 0} \textup{Cap}_\theta\{|f_j -f| > \varepsilon\} =0$. When $f_j,f$ are $\theta$-psh then convergence in capacity has important ramifications related to convergence of non-pluripolar measures (see \cite[Theorem 2.3]{DDL17}).

For an extensive treatment of non-pluripolar products in the setting of big cohomology classes we refer to \cite[Section 1 and  2]{BEGZ10} and \cite[Section 2 and 3]{DDL17}.

If $u, v \in \textup{PSH}(X,\theta)$, then $u$ is said to be \emph{less singular} than $v$ if $v\leq u+C$ for some $C\in \Bbb R$, while they are said to have the \emph{same singularity type} if $u-C \leq v\leq u+C$, for some $C\in \mathbb{R}$. A  $\theta$-psh function $u$ is said to have \emph{minimal singularity type} if it is less singular than any other $\theta$-psh function. An example of a $\theta$-psh function with minimal singularity is 
$$
V_\theta(x):=\sup\{ u(x) \setdef  u \in \psh(X, \theta),\; \; u\leq 0\}.
$$
For simplicity, in the whole paper we make the following normalization: 
$$\vol(\{\theta\}):=\int_X \theta_{V_{\theta}}^n =1.$$ 
By multiplying $\theta$ with a constant this can always be attained.
\begin{lemma}
	\label{lem: Berman Vtheta}
	We have $\theta_{V_{\theta}}^n \leq \id_{\{V_{\theta}=0\}} \theta^n$. 
\end{lemma}
Note here that the form $\theta$ may fail to be semi-positive at some points but it is semipositive on the set $\{V_{\theta}=0\}$. This lemma is a result of Berman \cite{Ber13} (for a detailed argument in the big case we refer to \cite[Theorem 2.6]{DDL16} (arXiv version)). 

If $u$ has minimal singularity type  then $\int_X \theta_u^n$, the \emph{total mass} of $\theta_u^n$, is equal to $\int_X \theta_{V_{\theta}}^n$ which was normalized to be $1$. With this convention, for a general $u\in \PSH(X,\theta)$,  $\int_X \theta_u^n$ may take any value in $[0,1]$. Lastly, according to \cite[Theorem 1.2]{WN17} if $u$ is less singular than $v$ then $\int_X \theta_v^n \leq \int_X \theta_u^n$.

\subsection{The energy functionals}

If $u\in \PSH(X,\theta)$ has minimal singularity type then its Monge-Amp\`ere energy  is defined as
\[
\mathrm{I} (u) :=\frac{1}{n+1} \sum_{k=0}^n \int_X (u-V_\theta) \theta_u^k \wedge\theta_{V_\theta}^{n-k}. 
\]

\noindent We collect basic properties of the Monge-Amp\`ere energy:

\begin{theorem}\label{thm: basic I energy} Suppose $u,v \in \PSH(X, \theta)$ have minimal singularity type. The following hold:\\
\noindent (i) $ \mathrm{I}(u)-\mathrm{I}(v) = \frac{1}{n+1}\sum_{k=0}^n \int_X (u-v) \theta_{u}^k \wedge \theta_{v}^{n-k}.$\\
\noindent (ii) $\mathrm{I}$ is non-decreasing and concave along affine curves. Additionally, the following estimates hold: $
	\int_X (u-v) \theta_u^n \leq \AM(u) -\AM(v) \leq \int_X (u-v) \theta_v^n.$\\
\noindent (iii) If $v\leq u$ then, $
\frac{1}{n+1} \int_X (u-v) \theta_v^n \leq  \AM(u) - \AM(v) \leq  \int_X (u-v) \theta_{v}^n. $ In particular, $\AM(v) \leq \AM(u)$
\end{theorem}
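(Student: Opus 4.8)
The plan is to reduce all three statements to a single differential identity for $\mathrm{I}$ along affine paths. Fix $u,v$ of minimal singularity type and set $u_t := (1-t)v + tu$, $t\in[0,1]$. Since $h:=u-v$ is bounded (same, minimal, singularity type), each $u_t$ is again $\theta$-psh of minimal singularity type with $\int_X\theta_{u_t}^n=1$, and $t\mapsto u_t$ converges uniformly as $t$ varies. The key claim is the \emph{derivative formula}
$$\frac{d}{dt}\mathrm{I}(u_t)=\int_X (u-v)\,\theta_{u_t}^n .$$
Everything else should fall out of this by elementary manipulations.

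To prove the claim I would differentiate the defining sum term by term, using $\tfrac{d}{dt}\theta_{u_t}^k=k\,i\partial\bar\partial h\wedge\theta_{u_t}^{k-1}$, so that
$$\frac{d}{dt}\mathrm{I}(u_t)=\frac{1}{n+1}\sum_{k=0}^n\Big[\int_X h\,\theta_{u_t}^k\wedge\theta_{V_\theta}^{n-k}+k\int_X (u_t-V_\theta)\,i\partial\bar\partial h\wedge\theta_{u_t}^{k-1}\wedge\theta_{V_\theta}^{n-k}\Big].$$
Integrating by parts in the second integral (legitimate precisely because $u_t$ and $V_\theta$ have the same singularity type and full mass) replaces $i\partial\bar\partial h=\theta_{u_t}-\theta_{V_\theta}$, turning that term into $\int_X h\,\theta_{u_t}^k\wedge\theta_{V_\theta}^{n-k}-\int_X h\,\theta_{u_t}^{k-1}\wedge\theta_{V_\theta}^{n-k+1}$. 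Writing $a_k:=\int_X h\,\theta_{u_t}^k\wedge\theta_{V_\theta}^{n-k}$, the $k$-th bracket becomes $(k+1)a_k-k\,a_{k-1}$, and $\sum_{k=0}^n\big[(k+1)a_k-k\,a_{k-1}\big]$ telescopes to $(n+1)a_n$; hence $\tfrac{d}{dt}\mathrm{I}(u_t)=a_n=\int_X h\,\theta_{u_t}^n$, which is the claim. Differentiating once more and integrating by parts gives $\tfrac{d^2}{dt^2}\mathrm{I}(u_t)=-n\int_X i\partial h\wedge\bar\partial h\wedge\theta_{u_t}^{n-1}\le 0$, so $f(t):=\mathrm{I}(u_t)$ is concave; and if $v\le u$ then $h\ge 0$, so $f'\ge 0$ and $\mathrm{I}$ is non-decreasing.

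Now (i): since $\theta_{u_t}=(1-t)\theta_v+t\theta_u$, one has $\theta_{u_t}^n=\sum_{k=0}^n\binom nk t^k(1-t)^{n-k}\theta_u^k\wedge\theta_v^{n-k}$; integrating the derivative formula over $[0,1]$ and using the Beta integral $\binom nk\int_0^1 t^k(1-t)^{n-k}\,dt=\tfrac1{n+1}$ yields $\mathrm{I}(u)-\mathrm{I}(v)=\tfrac1{n+1}\sum_{k=0}^n\int_X (u-v)\theta_u^k\wedge\theta_v^{n-k}$. For (ii), concavity of $f$ gives $f'(1)\le f(1)-f(0)\le f'(0)$, i.e. $\int_X(u-v)\theta_u^n\le\mathrm{I}(u)-\mathrm{I}(v)\le\int_X(u-v)\theta_v^n$, and monotonicity was noted above. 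For (iii), when $v\le u$ every mixed term $\int_X(u-v)\theta_u^k\wedge\theta_v^{n-k}$ appearing in (i) is nonnegative, so discarding all but the $k=0$ term gives $\mathrm{I}(u)-\mathrm{I}(v)\ge\tfrac1{n+1}\int_X(u-v)\theta_v^n$, while the upper bound and $\mathrm{I}(v)\le\mathrm{I}(u)$ are immediate from (ii) and positivity.

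The genuine obstacle — where I would spend the most care — is justifying the manipulations in the derivative formula in the \emph{unbounded} setting: weak continuity of $t\mapsto\theta_{u_t}^k$, validity of differentiation under the integral sign, and above all integration by parts for non-pluripolar mixed products. All of this rests on the fact that $V_\theta$, $u$, $v$, hence every $u_t$, share the \emph{same} (minimal) singularity type and carry full mass, under which hypothesis the relevant continuity and integration-by-parts statements for non-pluripolar products hold, and $u-v$ being bounded controls the error terms. A more robust alternative is to prove directly the finite-difference ("summation by parts") form of identity (i) for potentials with minimal singularities, which is in fact the only form one needs downstream.
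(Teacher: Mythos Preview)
Your argument is correct and is precisely the standard integration-by-parts computation the paper is alluding to: the paper does not give a detailed proof but simply observes that, since integration by parts for non-pluripolar products holds for potentials of minimal singularity type \cite[Theorem 1.14]{BEGZ10}, the K\"ahler-case proof goes through verbatim (referring to \cite[Section 2.2]{BEGZ10}). Your write-up is exactly that proof spelled out, with the derivative formula $\frac{d}{dt}\mathrm{I}(u_t)=\int_X(u-v)\theta_{u_t}^n$ as the engine and the Beta-integral trick as a clean way to recover the symmetric expression in (i); the alternative you mention at the end (direct summation by parts on the defining sum for $\mathrm{I}(u)-\mathrm{I}(v)$) is the other standard route and is arguably closer to how \cite{BEGZ10} presents it, but the two are equivalent.
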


In the K\"ahler case, the above formulas and inequalities can be established using integration by parts. When dealing with potentials  having minimal singularity, integration by parts works in the big case as well \cite[Theorem 1.14]{BEGZ10}, hence the proof from the K\"ahler case works with only superficial changes (see \cite[Section 2.2]{BEGZ10}).

Using the monotonicity property of $I=$ from above we can introduce the Monge-Amp\`ere energy for arbitrary $u\in \mathrm{PSH}(X,\theta)$ as
\[
\AM(u) : =\inf \{\AM(v) \setdef  v\in \mathrm{PSH} (X,\theta), \; v\ \textrm{has minimal singularity type, and } u\leq v\}. 
\]
We let $\mathcal{E}^1(X,\theta)$ denote the set of all $u\in \psh(X,\theta)$  such that $\AM(u)>-\infty$. Since $\theta$ will be fixed throughout the paper we will often denote this space simply as $\mathcal{E}^1$.  As shown in \cite[Proposition 2.10]{BEGZ10} the functional $\AM$, thus defined on $\PSH(X,\theta)$ (and may take value $-\infty$) is non-decreasing, concave, upper semicontinuous on ${\rm PSH}(X,\theta)$, and continuous along decreasing sequences. 

It follows from \cite[Proposition 2.11]{BEGZ10} that $\int_X (V_{\theta}-u) \theta_u^n <+\infty$ whenever $u\in \Ec^1$. For $C>0$, by Lemma \ref{lem: plurifine_prop} we have $\id_{\{u>V_{\theta}-C\}} \theta_{\max(u,V_{\theta}-C)}^n= \id_{\{u>V_{\theta}-C\}} \theta_{u}^n$.  Since $\int_X \theta_{u}^n=\int_X \theta_{\max(u,V_{\theta}-C)}^n=1$ we can write
\begin{eqnarray}
\label{eq: E1 by mass}
\lim_{C\to +\infty} C\int_{\{u\leq V_{\theta}-C\}}\theta_{\max(u,V_{\theta}-C)}^n & = & \lim_{C\to +\infty} C\int_{\{u\leq V_{\theta}-C\}}\theta_{u}^n  \\
&\leq & \lim_{C\to +\infty} \int_{\{u\leq V_{\theta}-C\}} (V_{\theta}-u)\theta_{u}^n =0. \nonumber
\end{eqnarray}

\begin{prop}\label{prop: basic I energy}
	The conclusions of Theorem \ref{thm: basic I energy} still hold for $u,v \in \Ec^1$. 
\end{prop}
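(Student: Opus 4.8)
The plan is to reduce Proposition~\ref{prop: basic I energy} to Theorem~\ref{thm: basic I energy} by an exhaustion argument, approximating arbitrary potentials in $\mathcal{E}^1$ by potentials with minimal singularities to which the known formulas apply. Given $u \in \mathcal{E}^1(X,\theta)$, set $u_C := \max(u, V_\theta - C)$; these have minimal singularities, decrease to $u$ as $C \to +\infty$, and satisfy $\AM(u_C) \to \AM(u)$ by continuity of $\AM$ along decreasing sequences (as recorded after the definition of $\mathcal{E}^1$). The same construction applies to $v$, giving $v_C$. The strategy is then to write each of the three items for the pair $(u_C, v_C)$ using Theorem~\ref{thm: basic I energy}, and pass to the limit $C \to +\infty$.

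For item (i), one must show that $\frac{1}{n+1}\sum_{k=0}^n \int_X (u_C - v_C)\, \theta_{u_C}^k \wedge \theta_{v_C}^{n-k}$ converges to $\frac{1}{n+1}\sum_{k=0}^n \int_X (u - v)\, \theta_{u}^k \wedge \theta_{v}^{n-k}$. The subtlety here is that the mixed non-pluripolar products $\theta_{u_C}^k \wedge \theta_{v_C}^{n-k}$ do not simply converge weakly to $\theta_u^k \wedge \theta_v^{n-k}$ in general, and the integrand $u_C - v_C$ is only quasi-psh, not continuous. I would handle this by the standard device used for $\mathcal{E}^1$ estimates: split each integral over the regions $\{u > V_\theta - C\}\cap\{v > V_\theta - C\}$ (where $u_C = u$, $v_C = v$ and plurifine locality gives $\theta_{u_C}^k\wedge\theta_{v_C}^{n-k} = \theta_u^k\wedge\theta_v^{n-k}$ there) and its complement, then control the contribution from the complement using the mass-decay estimate~\eqref{eq: E1 by mass} together with the finiteness $\int_X (V_\theta - u)\theta_u^n < +\infty$ (and its mixed analogues, which follow by comparing mixed masses to pure ones). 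The monotone/dominated convergence of $\int_{\{u>V_\theta-C\}\cap\{v>V_\theta-C\}}(u-v)\theta_u^k\wedge\theta_v^{n-k}$ to the full integral is then routine.

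Items (ii) and (iii) follow more easily once (i)-type convergence is in hand. For monotonicity: if $u \leq v$ in $\mathcal{E}^1$ then $u_C \leq v_C$, so $\AM(u_C) \leq \AM(v_C)$ by Theorem~\ref{thm: basic I energy}(iii), and passing to the limit gives $\AM(u) \leq \AM(v)$; this also makes the two-sided definition of $\AM$ on $\mathcal{E}^1$ consistent. Concavity along affine curves $t \mapsto tu + (1-t)v$ passes to the limit directly since $\AM$ is already known to be concave on all of $\PSH(X,\theta)$. The sandwich estimates in (ii) and (iii) are obtained by writing them for $(u_C,v_C)$, where the boundary terms $\int_X(u_C-v_C)\theta_{u_C}^n$ and $\int_X(u_C-v_C)\theta_{v_C}^n$ converge to $\int_X(u-v)\theta_u^n$ and $\int_X(u-v)\theta_v^n$ by the same splitting argument (now with $k=n$), while $\AM(u_C)-\AM(v_C) \to \AM(u)-\AM(v)$.

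The main obstacle is the convergence of the mixed-product integrals in step (i): one needs that truncation error is genuinely controlled uniformly, which requires knowing that the mixed non-pluripolar masses $\int_{\{u \leq V_\theta - C\}} \theta_{u_C}^k \wedge \theta_{v_C}^{n-k}$ decay, and that the potential differences $|u-v|$ are integrable against these mixed measures. These are exactly the kind of estimates developed in \cite{BEGZ10} and \cite{DDL16} for $\mathcal{E}^1$, so I expect the argument to go through by assembling those ingredients; everything else is a limiting formality.
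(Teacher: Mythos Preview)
Your approach is essentially the paper's: truncate via $u^C=\max(u,V_\theta-C)$, apply Theorem~\ref{thm: basic I energy} to $(u^C,v^C)$, use plurifine locality on the coincidence set, and control the remainder. The one step you leave vague---``comparing mixed masses to pure ones''---is precisely where the paper does the work, and it is not quite as automatic as you suggest: the paper uses the \emph{partial comparison principle} \cite[Proposition~2.2]{BEGZ10} together with the set inclusions
\[
\{u\leq V_\theta-C\}\ \subset\ \Big\{u^C\leq \tfrac{v^C+V_\theta-C}{2}\Big\}\ \subset\ \{u\leq V_\theta-C/2\}
\]
and the trivial bound $\theta_{v^C}^{n-k}\leq 2^{n-k}\,\theta_{(v^C+V_\theta-C)/2}^{n-k}$ to obtain
\[
C\int_{\{u\leq V_\theta-C\}}\theta_{u^C}^k\wedge\theta_{v^C}^{n-k}\ \leq\ 2^{n-k}C\int_{\{u\leq V_\theta-C\}}\theta_{u^C}^n + 2^{n-k}C\int_{\{u\leq V_\theta-C/2\}}\theta_u^n,
\]
both of which tend to zero by \eqref{eq: E1 by mass}. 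Once this is in hand, your outline is the paper's proof; but note that generic ``mixed analogues of \eqref{eq: E1 by mass}'' do not follow from integrability alone---the comparison principle is the actual mechanism that converts mixed masses on sublevel sets into pure ones.
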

\begin{proof}
	We can assume that $u,v\leq 0$.  We set $u^C:= \max(u,V_{\theta}-C)$ for $C>0$. We want to prove that, for $k\in \{0,...,n\}$, 
    \begin{equation}\label{eq: basic I energy 11}
		\lim_{C\to +\infty}\int_X (u^C-v^C) \theta_{u^C}^k \wedge \theta_{v^C}^{n-k}  =\int_X (u-v) \theta_{u}^k \wedge \theta_{v}^{n-k}. 
	\end{equation}
    Clearly, it suffices to check that
	\begin{equation}\label{eq: basic I energy 1}
		\lim_{C\to +\infty}\int_X (u^C-V_{\theta}) \theta_{u^C}^k \wedge \theta_{v^C}^{n-k}  =\int_X (u-V_{\theta}) \theta_{u}^k \wedge \theta_{v}^{n-k}. 
	\end{equation}
	By decomposing the integral into two parts $\int_{\{\min(u,v)>V_{\theta}-C\}}$ and $\int_{\{\min(u,v)\leq V_{\theta}-C\}}$, using Lemma \ref{lem: plurifine_prop} and noting that $\{\min(u,v)\leq V_{\theta}-C\} \subseteq \{u\leq V_{\theta}-C\}\cup \{v\leq V_{\theta}-C\}$, we see that proving \eqref{eq: basic I energy 1}  boils down to showing that 
	\begin{equation}
		\label{eq: energy estimate 1}
		\lim_{C\to +\infty} C \int_{\{u\leq V_{\theta}-C\}}\theta_{u^C}^k \wedge \theta_{v^C}^{n-k} =0,\  \text{and}\ \lim_{C\to +\infty} C \int_{\{v\leq V_{\theta}-C\}}\theta_{u^C}^k \wedge \theta_{v^C}^{n-k} =0, \  \forall k. 
	\end{equation}

	We will prove the first equality and the same arguments apply to prove the second one. 
	Observing that $V_\theta-C \leq v^C \leq V_\theta$ we have the inclusion 
	\[
	\{u\leq V_{\theta}-C\} \subset \left \{u^C\leq \frac{v^C+V_{\theta}-C}{2}\right \} \subset  \{u\leq V_{\theta}-C/2\}.
	\]
	Using the partial comparison principle \cite[Proposition 2.2]{BEGZ10} and that 
    $$\theta_{v^C}^{n-k} \leq  2^{n-k} \theta_{\frac{v^C+V_\theta-C}{2}}^{n-k}$$ 
    we get 
	\begin{eqnarray*}
		C \int_{\{u\leq V_{\theta}-C\}}\theta_{u^C}^k \wedge \theta_{v^C}^{n-k}  &\leq &  C   \int_{\{u^C\leq \frac{v^C+V_{\theta}-C}{2}\}}\theta_{u^C}^k \wedge \theta_{v^C}^{n-k}\\
		&\leq & 2^{n-k} C  \int_{\{u^C\leq \frac{v^C+V_{\theta}-C}{2}\}}\theta_{u^C}^n\\
		&\leq & 2^{n-k} C  \int_{\{u\leq V_{\theta}-C/2\}}\theta_{u^C}^n\\
		&\leq & 2^{n-k} C \int_{\{u\leq V_{\theta}-C/2\}}\theta_{u}^n,
	\end{eqnarray*}
    where in the last inequality we used Lemma \ref{lem: plurifine_prop} and $1 = \int_X \theta_{u}^n= \int_X \theta_{u^C}^n$.
From this and \eqref{eq: E1 by mass} we obtain \eqref{eq: energy estimate 1}, hence  \eqref{eq: basic I energy 1}, completing the proof. 
\end{proof}

\begin{lemma}\label{lem: bound energy 1}
	If $u\leq v \leq 0$ are in $\Ec^1$ then, for every $C>0$,
	\[
	 \theta_v^n (v\leq V_{\theta}-C) \leq 2^{n}  \theta_u^n(u\leq V_{\theta}-C/2). 
 	\]
\end{lemma}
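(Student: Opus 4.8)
The plan is to reduce the estimate on the sublevel set $\{v \le V_\theta - C\}$ to one on the sublevel set $\{u \le V_\theta - C/2\}$ via the partial comparison principle, exactly in the spirit of the inclusion-of-sets trick used in the proof of Proposition 1.5. First I would normalize so that $u \le v \le 0$, and truncate: set $u^C := \max(u, V_\theta - C)$ and $v^C := \max(v, V_\theta - C)$, which both have minimal singularities, so that Bedford-Taylor/pluripotential tools (partial comparison principle, plurifine locality) apply directly to $\theta_{u^C}^n$ and $\theta_{v^C}^n$.

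The key geometric observation is the chain of inclusions
\[
\{v \le V_\theta - C\} \subset \Bigl\{ v^C \le \tfrac{u^C + V_\theta - C}{2} \Bigr\} \subset \{u \le V_\theta - C/2\},
\]
where the first inclusion uses $V_\theta - C \le u^C \le V_\theta$ (so on $\{v \le V_\theta-C\}$ we have $v^C = V_\theta - C \le \tfrac{u^C + V_\theta - C}{2}$), and the second uses $v^C \le v$ together with the elementary fact that $v^C \le \tfrac{u^C+V_\theta-C}{2}$ forces $\tfrac{u^C - (V_\theta - C)}{2} \le V_\theta - v \le V_\theta$... more carefully, $v^C \le \tfrac{u^C + V_\theta-C}{2} \le \tfrac{V_\theta + V_\theta - C}{2} = V_\theta - C/2$, hence $v \ge v^C$ is irrelevant; rather one wants $v^C = V_\theta-C$ on the set and compares against $u^C$ — I would write this out carefully to make sure the middle set lands inside $\{u \le V_\theta - C/2\}$, using that on the middle set $u^C \ge 2v^C - (V_\theta - C) \ge 2(V_\theta - C) - (V_\theta-C) = V_\theta - C$ gives nothing, so instead I compare using $u^C \le V_\theta$: from $v^C \le \tfrac{u^C+V_\theta-C}{2}$ and the plurifine/truncation structure one reads off $u \le V_\theta - C/2$ on the part where $v^C < V_\theta$, which is where the mass lives. (The point is to mimic the double-inclusion in Proposition 1.5 with the roles of $u$ and $v$ swapped appropriately.)

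Granting the inclusions, I would then estimate: on $\{v \le V_\theta - C\}$, by the plurifine property $\theta_v^n = \theta_{v^C}^n$ there, so
\[
\theta_v^n(v \le V_\theta - C) = \theta_{v^C}^n(v \le V_\theta - C) \le \theta_{v^C}^n\Bigl(v^C \le \tfrac{u^C + V_\theta - C}{2}\Bigr).
\]
Now apply the partial comparison principle \cite[Proposition 2.2]{BEGZ10} to the pair $v^C$ and $w := \tfrac{u^C + V_\theta - C}{2}$ (both with minimal singularities, with $v^C \le w$ on the relevant set), yielding $\int_{\{v^C \le w\}} \theta_{v^C}^n \le \int_{\{v^C \le w\}} \theta_w^n$, and then $\theta_w^n \le 2^n \theta_{u^C}^n$ since $\ddbar w = \tfrac12 \ddbar u^C$ up to the smooth, $u$-independent piece — more precisely $\theta_w \le \tfrac12(\theta_{u^C} + \theta_{V_\theta - C}) $ is not quite it; rather $w = \tfrac{u^C}{2} + \tfrac{V_\theta - C}{2}$ so $\theta_w = \tfrac12 \theta_{u^C} + \tfrac12 \theta_{V_\theta}$, hence $\theta_w \le \theta_{u^C}$ pointwise as currents after noting $\theta_{V_\theta} \le$ ... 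I must be careful: $\theta_w^n \le 2^n \theta_{(u^C + V_\theta-C)/2}^n$ already displayed in Proposition 1.5 as $\theta_{v^C}^{n-k} \le 2^{n-k}\theta_{(v^C+V_\theta-C)/2}^{n-k}$, so the analogous bound gives a factor $2^n$. Finally, plurifine locality converts $\theta_{u^C}^n$ back to $\theta_u^n$ on $\{u \le V_\theta - C/2\}$ (where $u^C$ may differ from $u$, so instead I bound $\theta_{u^C}^n(u \le V_\theta - C/2)$ by splitting into $\{u \le V_\theta - C\}$, where one can iterate, and $\{V_\theta - C < u \le V_\theta - C/2\}$, where $u^C = u$), assembling the constant $2^n$.

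The main obstacle I anticipate is getting the \emph{second} inclusion and the final conversion $\theta_{u^C}^n \rightsquigarrow \theta_u^n$ exactly right, since $u^C \ne u$ on a large set; the honest statement to aim for is $\theta_{u^C}^n(u^C \le V_\theta - C/2) \le \theta_u^n(u \le V_\theta - C/2)$ by plurifine locality (the two truncations agree on $\{u \le V_\theta - C/2\} \cap \{u > V_\theta - C\}$ and on $\{u \le V_\theta - C\}$ one has $u^C = V_\theta - C$ which is smooth, contributing no non-pluripolar mass where it stays above $V_\theta - C$...), so this bookkeeping, rather than any deep new idea, is where care is needed. Everything else is a direct transcription of the partial-comparison-plus-dilation argument already deployed in the proof of Proposition 1.5.
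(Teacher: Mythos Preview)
Your plan has a genuine gap: the comparison principle is being invoked in the wrong direction, and this is forced by your choice of auxiliary function. With $w=\tfrac{u^C+V_\theta-C}{2}$ and middle set $\{v^C\le w\}$, the comparison principle gives
\[
\int_{\{v^C\le w\}}\theta_w^n \;\le\; \int_{\{v^C\le w\}}\theta_{v^C}^n,
\]
which is the reverse of what you claim. Likewise, since $\theta_w=\tfrac12\theta_{u^C}+\tfrac12\theta_{V_\theta}$, one only has $\theta_{u^C}^n\le 2^n\theta_w^n$, not $\theta_w^n\le 2^n\theta_{u^C}^n$. Both steps in your chain therefore go backwards.

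The paper's proof fixes this by building the auxiliary function from $v$ rather than $u$: set $w=\tfrac{v+V_\theta-C}{2}$ (no truncation needed, since the comparison principle holds in $\mathcal E^1$). Then the inclusions
\[
\{v\le V_\theta-C\}\subset\{u\le w\}\subset\{u\le V_\theta-C/2\}
\]
are immediate (the first because $u\le v$ and $w\ge v$ on $\{v\le V_\theta-C\}$; the second because $v\le V_\theta$ gives $w\le V_\theta-C/2$). Now $\theta_v\le 2\theta_w$ yields $\theta_v^n\le 2^n\theta_w^n$ in the correct direction, and on $\{u\le w\}$ the comparison principle converts $\theta_w^n$ to $\theta_u^n$, again in the correct direction. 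This also eliminates the truncation bookkeeping you were worried about: there is no need to pass through $u^C,v^C$ at all.
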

\begin{proof}
Fix  $C>0$ and set $w:= \frac{v+V_{\theta}-C}{2}$. Using the inclusion of sets 
\[
\{v\leq V_{\theta}-C\} \subset \left \{u\leq w\right \} \subset  \{u\leq V_{\theta}-C/2\}
\]
and  the comparison principle  \cite[Corollary 2.3]{BEGZ10} we obtain 
\begin{eqnarray*}
	 \theta_v^n (v\leq V_{\theta}-C)  &\leq &    \theta_v^n(u\leq w) 
\leq    2^n \theta_{w}^n(u\leq w) \\ 
	&\leq &    2^n  \theta_u^n(u\leq w) \leq    2^n   \theta_u^n(u\leq V_{\theta}-C/2). 
\end{eqnarray*}
\end{proof}

In the study of the metric space $(\mathcal{E}^1,d_1)$ we will also make use of the $I_1$-functional introduced in \cite{Dar15} (inspired by the $I_2$ functional of \cite{G14}): 
\[
I_1(u,v) =\int_X |u-v| (\theta_u^n+\theta_v^n), \quad u,v\in \Ec^1(X,\theta).
\]
It follows directly from Lemma \ref{lem: plurifine_prop} that 
\begin{equation}
	\label{eq: plurifine Pythagore max}
	I_1(u,v) = I_1(\max(u,v),u) + I_1(\max(u,v),v), \ \forall u,v \in \Ec^1. 
\end{equation}

\begin{prop}\label{prop: BEGZ convergence I and I1}
	Let $\{u_j\}_j \subset \mathcal{E}^1$ be  a sequence  converging decreasingly (or increasingly a.e.) towards $u\in \mathcal{E}^1$.  Then $I_1(u_j,u) \to 0$ and $\AM(u_j) \to \AM(u)$. 
\end{prop}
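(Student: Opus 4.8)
The plan is to prove $I_1(u_j,u)\to 0$ directly; the convergence $\AM(u_j)\to\AM(u)$ will come out along the way (and, conversely, $|\AM(u_j)-\AM(u)|\le I_1(u_j,u)$ by Proposition~\ref{prop: basic I energy}(ii), so the two statements are essentially equivalent here). Write $I_1(u_j,u)=\int_X|u-u_j|\,\theta_{u_j}^n+\int_X|u-u_j|\,\theta_u^n$ and handle the ``moving'' and the ``fixed'' measure term, and the two monotone directions, separately. \emph{Decreasing case} ($u_j\downarrow u$, so $u\le u_j$): this is immediate from the energy estimates. By Proposition~\ref{prop: basic I energy}(ii), $\int_X(u_j-u)\,\theta_{u_j}^n\le\AM(u_j)-\AM(u)$, and by (iii), $\int_X(u_j-u)\,\theta_u^n\le (n+1)(\AM(u_j)-\AM(u))$, hence $I_1(u_j,u)\le(n+2)(\AM(u_j)-\AM(u))$; since $\AM$ is continuous along decreasing sequences on $\PSH(X,\theta)$, the right-hand side tends to $0$.

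\emph{Increasing case} ($u_j\uparrow u$ a.e.; we may assume $u\le 0$, hence $u_j\le u\le 0$). For the fixed-measure term: $u-u_j\downarrow 0$ outside a pluripolar set, hence $\theta_u^n$-a.e., and $0\le u-u_j\le u-u_1$ with $\int_X(u-u_1)\,\theta_u^n\le\AM(u)-\AM(u_1)<+\infty$ by Proposition~\ref{prop: basic I energy}(ii); dominated convergence gives $\int_X(u-u_j)\,\theta_u^n\to 0$. For the moving-measure term, Proposition~\ref{prop: basic I energy}(iii) yields $\int_X(u-u_j)\,\theta_{u_j}^n\le(n+1)(\AM(u)-\AM(u_j))$, so the whole statement reduces to proving $\AM(u_j)\to\AM(u)$. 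As $\AM(u_j)\le\AM(u)$ by monotonicity, it suffices to bound $\limsup_j(\AM(u)-\AM(u_j))$ from above, and here one truncates: with $u^C:=\max(u,V_\theta-C)$ and $u_j^C:=\max(u_j,V_\theta-C)$, all of these have minimal singularities, $u^C\ge u$, and for each fixed $C$ one has $u_j^C\uparrow u^C$ as $j\to\infty$. Writing
$$\AM(u)-\AM(u_j)=\bigl(\AM(u)-\AM(u^C)\bigr)+\bigl(\AM(u^C)-\AM(u_j^C)\bigr)+\bigl(\AM(u_j^C)-\AM(u_j)\bigr),$$
the first bracket is $\le 0$; the second tends to $0$ as $j\to\infty$ for each fixed $C$, because $u_j^C\uparrow u^C$ is a monotone sequence of minimal-singularity potentials, for which continuity of $\AM$ is standard (Bedford--Taylor; see also \cite[\S2.2]{BEGZ10}); and the third is controlled \emph{uniformly in $j$} via Proposition~\ref{prop: basic I energy}(iii) and Lemma~\ref{lem: bound energy 1}: $\AM(u_j^C)-\AM(u_j)\le\int_X(u_j^C-u_j)\,\theta_{u_j}^n\le\int_{\{u_j\le V_\theta-C\}}(V_\theta-u_j)\,\theta_{u_j}^n$, and a layer-cake estimate combining $\theta_{u_j}^n(u_j\le V_\theta-s)\le 2^n\theta_{u_1}^n(u_1\le V_\theta-s/2)$ with $\int_X(V_\theta-u_1)\,\theta_{u_1}^n<+\infty$ bounds this by some $\delta(C)$ with $\delta(C)\to 0$ as $C\to\infty$, independent of $j$. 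Letting $j\to\infty$ and then $C\to\infty$ gives $\limsup_j(\AM(u)-\AM(u_j))\le 0$, hence $\AM(u_j)\to\AM(u)$, and then $I_1(u_j,u)\to 0$ as above.

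The main obstacle is exactly the moving-measure term in the increasing case: the measures $\theta_{u_j}^n$ may a priori concentrate, so one cannot pass to the limit naively; the truncation reduces matters to the minimal-singularity situation (where Bedford--Taylor gives the monotone convergence of the energy), and the uniform-in-$j$ tail bound afforded by Lemma~\ref{lem: bound energy 1} together with \eqref{eq: E1 by mass} is what makes the limits in $j$ and $C$ interchangeable. If one is content to cite continuity of $\AM$ along increasing sequences in $\Ec^1$ directly from \cite{BEGZ10}, the increasing case collapses to the two-line estimate above plus the dominated-convergence argument for the fixed-measure term.
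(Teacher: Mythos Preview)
Your proof is correct and follows essentially the same route as the paper. The convergence of $\AM$ in the increasing case is handled identically (truncation $u_j^C$, minimal-singularity continuity for the middle term, and the uniform tail bound via Lemma~\ref{lem: bound energy 1} and layer-cake for the third term). The only organizational difference is in deducing $I_1\to 0$: the paper observes that by Proposition~\ref{prop: basic I energy}(i) the difference $\AM(u_j)-\AM(u)$ is a sum of $n+1$ terms of the same sign, so each (in particular the $k=0$ and $k=n$ terms) tends to $0$; you instead split $I_1$ into the fixed- and moving-measure parts and treat the fixed-measure term by dominated convergence. Both arguments are valid and of comparable length.
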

\begin{proof}
Observe first that in the case the functions $u_j, u$ have minimal singularity type the result was known (see e.g. \cite[Proposition 2.10, Theorem 2.17]{BEGZ10}, \cite[Proposition 4.3]{BB10}, or \cite[Lemma 4.1]{DDL17}).

We first prove the convergence of $\AM$. 
If the sequence is decreasing this was known by \cite[Proposition 2.10]{BEGZ10}. Assume now that $u_j\nearrow u\leq 0$. Again we denote $u^C:= \max(u,V_{\theta}-C)$ and observe that $u^C$ and $u_j^C$ have minimal singularity type. Since $\AM(u_j^C) \to \AM(u^C)$ as $j\to +\infty$ for any $C>0$ fixed and $\AM(u^C) \to \AM(u)$ as $C\to +\infty$, it suffices to show that 
\[
\lim_{C\to +\infty} (\AM(u_j^C) -\AM(u_j))  =0
\]
uniformly in $j$. By concavity (Proposition \ref{prop: basic I energy}) we have that 
\begin{eqnarray*}
	0\leq \AM(u_j^C) -\AM(u_j)& \leq & \int_X (u_j^C-u_j) \theta_{u_j}^n
	 \leq   \int_{\{u_j\leq V_{\theta}-C\}} (V_{\theta}-C-u_j) \theta_{u_j}^n\\
	 & = &  \int_{C}^{+\infty} \theta_{u_j}^n (u_j\leq V_{\theta}-t) dt.
\end{eqnarray*}
But it follows from Lemma \ref{lem: bound energy 1} that 
\[
\int_{\{u_j\leq V_{\theta}-t\}} \theta_{u_j}^n \leq 2^{n} \int_{\{u_1\leq V_{\theta}-t/2\}} \theta_{u_1}^n.
\]
Hence we can continue the above estimate and write
\begin{eqnarray*}
	0\leq \AM(u_j^C) -\AM(u_j)	 & \leq  &  \int_{C}^{+\infty} \theta_{u_j}^n (u_j\leq V_{\theta}-t) dt\\
	&\leq & 2^{n+1}\int_{C/2}^{+\infty}  \theta_{u_1}^n(u_1\leq V_{\theta}-t)dt\\
	&=& 2^{n+1} \int_{\{u_1\leq V_{\theta}-C/2\}} (V_{\theta}-u_1-C/2)  \theta_{u_1}^n. 
\end{eqnarray*}
 Since $u_1\in \Ec^1$ the last term above converges to $0$ as $C\to +\infty$ (in view of \eqref{eq: E1 by mass}), finishing the proof of the convergence of $\AM$.

We now prove the convergence of $I_1$. It follows from Proposition \ref{prop: basic I energy}  that $\AM(u_j)-\AM(u)$ is the sum of $(n+1)$ terms having the same sign (which is positive if the sequence is decreasing and negative if the sequence is increasing). Hence the convergence of $\AM$ implies that each term converges to $0$. In particular, 
\[
\lim_{j\to +\infty}\int_X |u_j-u|(\theta_u^n +\theta_{u_j}^n)  = \lim_{j\to +\infty}\int_X (u_j-u)(\theta_u^n +\theta_{u_j}^n)  =0. 
\]
\end{proof}

Next we record a particular case of the domination principle (see \cite[Proposition 5.9]{BL12} and \cite[Proposition 2.4]{DDL16}) that will be useful for us:

\begin{prop}\label{prop: domination} Let $u,v \in \mathcal E^1$ such that $u \leq v$ a.e. with respect to $\theta_v^n$. Then $u \leq v$.
\end{prop}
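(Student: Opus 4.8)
The plan is to reduce the statement to the minimal-singularity case, where the domination principle is classical (it goes back to the comparison principle of Bedford–Taylor, adapted to big classes in \cite{BEGZ10}), and then use the canonical cutoff approximation $u^C := \max(u, V_\theta - C)$, $v^C := \max(v, V_\theta - C)$ together with the energy estimates already available in the excerpt. First I would normalize so that $u, v \le 0$. The hypothesis is $u \le v$ a.e.\ with respect to $\theta_v^n$, and we want to upgrade this to $u \le v$ everywhere. The obvious strategy is to look at the set $D := \{u > v\}$ and show $\theta_v^n(D) = 0$ forces $u \le v$; but since $\theta_v^n$ need not be a ``nice'' measure, the efficient route is to work with the cutoffs, for which total masses are controlled.

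The key steps, in order: (1) Set $w^C = \max(u^C, v) = \max(u, v, V_\theta - C)$ and observe $w^C \ge v^C$, with $w^C = v^C$ precisely off the set $D \cap \{u > V_\theta - C\} \subset D$. By the plurifine locality of the non-pluripolar Monge–Ampère operator, $\theta_{w^C}^n = \theta_{v^C}^n$ on the open plurifine set $\{w^C = v^C\}$, while on $D$ we only know $\theta_v^n(D) = 0$; the point is that $w^C \ge v^C$ are both of minimal singularity type, so $\mathrm I(w^C) \ge \mathrm I(v^C)$ by Theorem \ref{thm: basic I energy}(iii), and more precisely
$$0 \le \mathrm I(w^C) - \mathrm I(v^C) \le \int_X (w^C - v^C)\, \theta_{v^C}^n = \int_{D} (w^C - v^C)\, \theta_{v^C}^n.$$
(2) Relate $\theta_{v^C}^n$ on $D$ to $\theta_v^n$ on $D$: on the plurifine set $\{v > V_\theta - C\}$ one has $\theta_{v^C}^n = \theta_v^n$, so $\int_{D \cap \{v > V_\theta - C\}}(w^C - v^C)\theta_{v^C}^n \le \int_D (w^C - v)\,\theta_v^n = 0$ since $\theta_v^n(D) = 0$; and on $D \cap \{v \le V_\theta - C\}$ one has $v^C = V_\theta - C \ge v$, so $w^C - v^C \le u^C - (V_\theta - C) \le (u - V_\theta + C)^+$, and this contribution is bounded by $\int_{\{u \le V_\theta - C\} \cup \{v \le V_\theta - C\}} \cdots$, which goes to $0$ as $C \to +\infty$ by \eqref{eq: E1 by mass} (using that $u, v \in \Ec^1$ and the mass estimate of Lemma \ref{lem: bound energy 1}). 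Hence $\mathrm I(w^C) - \mathrm I(v^C) \to 0$. (3) Now let $C \to +\infty$: $v^C \searrow$ (no—$v^C$ is nonincreasing in $C$, decreasing to $v$) and $w^C$ decreases to $\max(u,v)$, so by continuity of $\mathrm I$ along decreasing sequences (Proposition \ref{prop: BEGZ convergence I and I1}, after noting $\max(u,v) \in \Ec^1$ since it dominates $v$) we get $\mathrm I(\max(u,v)) = \mathrm I(v)$. (4) Finally, $v \le \max(u,v)$ with equal energy forces, via Theorem \ref{thm: basic I energy}(iii) / Proposition \ref{prop: basic I energy}, that $\int_X (\max(u,v) - v)\,\theta_v^n = 0$; but we want the pointwise conclusion. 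At this last step I would instead invoke the characterization that if $\phi \le \psi$ in $\Ec^1$ with $\mathrm I(\phi) = \mathrm I(\psi)$ then $\phi = \psi$ — which itself follows because $\frac{1}{n+1}\int_X (\psi - \phi)\theta_\phi^n \le \mathrm I(\psi) - \mathrm I(\phi) = 0$ gives $\psi = \phi$ a.e.\ $\theta_\phi^n$, and then one runs the classical minimal-singularity domination principle on the cutoffs $\phi^C \le \psi^C$ to conclude $\phi^C = \psi^C$ for every $C$, hence $\phi = \psi$.

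The main obstacle I anticipate is step (4): closing the loop from ``equal energy'' back to ``equal pointwise'' cleanly without circularity. The honest fix is to prove the minimal-singularity version of the domination principle first as a separate lemma — $u^C \le v^C$ a.e.\ $\theta_{v^C}^n$ implies $u^C \le v^C$ — which is genuinely classical (comparison principle applied to $\max(u^C, v^C)$ and $v^C$ on the open set where they could differ, exactly as in \cite[Corollary 2.5]{BEGZ10}), and then the cutoff/energy argument above simply transports the hypothesis $u \le v$ a.e.\ $\theta_v^n$ down to $u^C \le v^C$ a.e.\ $\theta_{v^C}^n$ for each $C$; letting $C \to \infty$ then finishes it without any energy computation at all. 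So in fact the cleanest plan is: reduce to the minimal-singularity case via $\max$ with $V_\theta - C$, check the a.e.-domination hypothesis is preserved under this cutoff using the plurifine property plus the vanishing \eqref{eq: E1 by mass} on the ``bad'' region, apply the known result, and take $C \to +\infty$. I would expect the write-up to follow this last route, with the bulk of the work being the bookkeeping in verifying that $\theta_{v^C}^n$-a.e.\ domination holds — the one spot where $u, v \in \Ec^1$ (rather than merely having small mass) is essential.
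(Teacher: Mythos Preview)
The paper does not prove this proposition; it simply records it as a known result, citing \cite[Proposition 5.9]{BL12} and \cite[Proposition 2.4]{DDL16}. Your proposal is therefore attempting considerably more than the paper itself does.

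Your steps (1)--(3) are sound and do yield $\AM(\max(u,v)) = \AM(v)$: the splitting into $\{v > V_\theta - C\}$ and $\{v \leq V_\theta - C\}$ works, with the first piece vanishing by the hypothesis and plurifine locality, and the second bounded by $C\cdot \theta_{v^C}^n(\{v \leq V_\theta - C\}) \to 0$ via \eqref{eq: E1 by mass}.

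The gap is exactly where you suspect, in step (4), and your proposed ``cleanest plan'' does not close it. The hypothesis $u \leq v$ a.e.\ $\theta_v^n$ does \emph{not} transfer to $u^C \leq v^C$ a.e.\ $\theta_{v^C}^n$ for fixed $C$: on the set $\{v < V_\theta - C\} \cap \{u > V_\theta - C\}$ one has $u^C = u > V_\theta - C = v^C$, and by plurifine locality $\theta_{v^C}^n = \theta_{V_\theta}^n$ on the plurifine open set $\{v < V_\theta - C\}$, which has no reason to vanish on that intersection. The estimate \eqref{eq: E1 by mass} only says this mass tends to $0$ as $C \to +\infty$, not that it is zero for any given $C$; but the minimal-singularity domination principle requires exact vanishing. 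The same obstruction reappears if you try to run the minimal-singularity argument on the pair $v^C \leq \max(u,v)^C$: you again need a.e.\ equality with respect to $\theta_{v^C}^n$ for each fixed $C$, which you do not have. Equality of energies $\AM(\max(u,v)) = \AM(v)$ by itself only gives $\max(u,v) = v$ a.e.\ with respect to every mixed measure $\theta_{\max(u,v)}^k \wedge \theta_v^{n-k}$, and without an additional input --- typically a comparison/capacity argument involving an auxiliary convex combination $(1-\varepsilon)v + \varepsilon\psi$ with $\psi$ of minimal singularity, as in the cited references --- this does not force pointwise equality.
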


The next result is a consequence of \cite[Lemma 5.8]{BBGZ13} and its proof:

\begin{prop}\label{prop: I_est} Suppose $C>0$ and $\phi,\psi,u,v \in \mathcal E^1$ satisfies
\[
I_1(\phi,V_{\theta}),I_1(\psi,V_{\theta}),I_1(u,V_{\theta}),I_1(v,V_{\theta}) \leq C.
\]

Then there exists a continuous increasing function $f_C: \Bbb R^+ \to \Bbb R^+ $ (only dependent on $C$) with $f_C(0)=0$ such that
\begin{equation}\label{eq: I_script_est2}\Big|\int_X (u-v) (\theta_\phi^n-\theta_\psi^n)\Big| \leq f_C(I_1(u,v)).
\end{equation}
\end{prop}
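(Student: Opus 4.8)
The plan is to reduce the claim to the case where all four potentials have minimal singularities, and then invoke (the proof of) \cite[Lemma 5.8]{BBGZ13} directly. The key point is that the left-hand side of \eqref{eq: I_script_est2} depends on $u,v$ through their difference paired against two Monge--Amp\`ere measures, and by the plurifine/monotonicity machinery already set up in Proposition \ref{prop: basic I energy}, Lemma \ref{lem: bound energy 1}, and Proposition \ref{prop: BEGZ convergence I and I1}, such quantities are stable under the canonical approximation $w \mapsto w^C := \max(w, V_\theta - C)$. So first I would replace $\phi,\psi,u,v$ by $\phi^C,\psi^C,u^C,v^C$; these have minimal singularities and, for $C$ large, satisfy $I_1(\phi^C,V_\theta),\dots,I_1(v^C,V_\theta) \leq 2C$ (say), using that $I_1(\cdot,V_\theta)$ is controlled after truncation — indeed $\int_X(V_\theta - w^C)\theta_{w^C}^n \le \int_X (V_\theta-w)\theta_w^n$ up to the error terms estimated via \eqref{eq: E1 by mass} and Lemma \ref{lem: bound energy 1}, exactly as in the proof of Proposition \ref{prop: BEGZ convergence I and I1}.

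Next, for potentials with minimal singularities one has genuine integration by parts \cite[Theorem 1.14]{BEGZ10}, so the argument of \cite[Lemma 5.8]{BBGZ13} applies verbatim: writing $\theta_\phi^n - \theta_\psi^n = \sum_{k=0}^{n-1} \theta_\phi^{k}\wedge\theta_\psi^{n-1-k}\wedge i\partial\bar\partial(\phi-\psi)$, integrating by parts to move the $i\partial\bar\partial$ onto $u-v$, and applying Cauchy--Schwarz for the mixed Monge--Amp\`ere form, one bounds $|\int_X (u-v)(\theta_\phi^n-\theta_\psi^n)|$ by a product of a power of $I_1(u,v)$ and a bounded factor depending only on the energy bound $C$; this produces the function $f_C$, continuous, increasing, with $f_C(0)=0$. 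The mild point here is to keep the constants depending only on $C$ (and $n$), which is exactly what \cite[Lemma 5.8]{BBGZ13} delivers.

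Finally I would pass to the limit $C \to +\infty$. Since $\phi^C \searrow \phi$, $\psi^C\searrow\psi$, $u^C\searrow u$, $v^C\searrow v$, Proposition \ref{prop: BEGZ convergence I and I1} gives $I_1(u^C,u),I_1(v^C,v)\to 0$, hence $I_1(u^C,v^C) \to I_1(u,v)$ (by the triangle-type inequality for $I_1$, or directly from \eqref{eq: plurifine Pythagore max} together with the convergence of each term in the energy difference); and the integrand convergence $\int_X (u^C-v^C)(\theta_{\phi^C}^n - \theta_{\psi^C}^n) \to \int_X (u-v)(\theta_\phi^n - \theta_\psi^n)$ follows from \eqref{eq: basic I energy 11}-type arguments applied to the mixed terms. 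Taking $f_C$ to be (monotone in $C$, or replaced by a fixed majorant $f := f_{2C'}$ for the relevant bound) and using continuity of $f_C$ at the limiting value of $I_1$, we recover \eqref{eq: I_script_est2} for the original potentials. The main obstacle I anticipate is the bookkeeping in the first step: ensuring that the energy bounds on the truncations stay uniformly controlled by a constant depending only on $C$, so that a single function $f$ works for all the truncated quadruples simultaneously; this is where Lemma \ref{lem: bound energy 1} and \eqref{eq: E1 by mass} do the real work, and it is essentially the only place where the big (as opposed to K\"ahler) setting requires care.
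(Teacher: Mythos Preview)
Your proposal is correct and matches the paper's approach: the paper gives no argument at all, simply recording the proposition as ``a consequence of \cite[Lemma 5.8]{BBGZ13} and its proof.'' Your truncation-and-limit scheme, using Proposition \ref{prop: basic I energy}, Lemma \ref{lem: bound energy 1}, and \eqref{eq: E1 by mass} to pass from minimal singularities to general $\mathcal{E}^1$, is exactly how one makes that citation precise in the big setting.
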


Following the terminology and results of  \cite{BEGZ10,Dar15} we say that a sequence $\{u_j\}_j\subset \mathcal{E}^1$ converges in energy towards $u\in \mathcal{E}^1$ if $I_1(u_j,u)\to 0$ as $j\to +\infty$.

\subsection{Quasi-psh envelopes}\label{section: envelope}
Given a measurable function $f$ on $X$ we define 
\[
P(f) := P_{\theta}(f) := \textup{usc}\left (\sup \{u \in \PSH(X,\theta) \setdef u \leq f \}\right),
\]
as the largest $\theta$-psh function lying below $f$. If $f=\min(u,v)$ for $u,v$ quasi-psh then there is no need to take the upper semicontinuous regularization in the definition of $P(u,v):= P_{\theta}(\min(u,v))$. The latter is the largest $\theta$-psh function lying below both $u$ and $v$, and was called the rooftop envelope of $u$ and $v$ in \cite{DR16}.

Given $\phi, \psi \in \PSH(X,\theta)$ the envelope of $\phi$ with respect to the singularity type of $\psi$, introduced by Ross and Witt-Nystr\"om \cite{RWN}, is defined as \begin{equation}\label{eq: P_sing_def}
P[\psi](\phi) := \textup{usc}\Big(\lim_{C \to +\infty}P(\psi+C,\phi)\Big).
\end{equation}
When $\phi = V_\theta$, we will simply write $P[\psi]:=P[\psi](V_\theta)$. This potential is the maximal element of the set of $u\in \PSH(X,\theta), u\leq 0$ and $\int_X \theta_u^n = \int_X \theta_{\psi}^n$ as shown in \cite{DDL17}. 

\begin{lemma}
	\label{lem: mass envelope}
	Suppose $u,v \in \PSH(X,\theta)$and $v$ is less singular than $u$.  Then  
	$$
	\int_X \theta_{P[u](v)}^n = \int_X \theta_{u}^n. 
	$$
\end{lemma}
The proof is essentially given in \cite{DDL17} but we recall it here for the reader's convenience. 
\begin{proof}
Let $C>0$ be such that $v\geq u-C$. For each $j>0$ set $u_j:= P(u+j,v)$. Then $u_j$ has the same singularity type as $u$ since $u-C\leq u_j \leq u+j$. It follows from \cite[Theorem 1.2]{WN17} that $\int_X \theta_{u_j}^n =\int_X \theta_u^n$. By definition $u_j \nearrow P[u](v)$  a.e. on $X$. It thus follows from \cite[Theorem 2.3 and Remark 2.5]{DDL17} that 
	$$
	\int_X \theta_{u}^n=\lim_{j\to +\infty} \int_X \theta_{u_j}^n = \int_X \theta_{P[u](v)}^n. 
	$$
\end{proof}

We refer to \cite{DDL16} and \cite{DDL17} for a detailed account on the properties of such envelopes that goes beyond the scope of our present investigations.

Finally, we recall that the Monge-Amp\`ere measure of such envelopes is concentrated on the contact set. Indeed, thanks to \cite[Lemma 3.7]{DDL17} we know that if $\psi,\phi \in \mathrm{PSH}(X, \theta)$ and $P(\psi,\phi)\neq -\infty$ then
\begin{equation}\label{eq: env_measure_concentrate}
	\theta_{P(\psi, \phi)}^n \leq\mathbbm{1}_{\{P(\psi, \phi)=\psi\}} \theta_{\psi}^n + \mathbbm{1}_{\{P(\psi, \phi)=\phi\}} \theta_{\phi}^n. 
\end{equation}
In the K\"ahler case this was proved in \cite[Proposition 3.3]{Dar14}. Moreover, \cite[Theorem 3.8]{DDL17} ensures that
$$\theta^n_{P[\psi]} \leq \mathbbm{1}_{\{P[\psi]=0\}}\theta^n.$$
In the following  we are going to make use of the above inequalities in a crucial way.

\subsection{Weak geodesic segments and rays} \label{section ray}
In this subsection, following  Berndtsson \cite{Bern} we  adapt the definition of (sub)geodesics to the context of big cohomology classes (see also \cite{DDL16}). 

Fix $0 < \ell \leq \infty$. For a curve $(0,\ell) \ni t \mapsto u_t \in \PSH(X,\theta)$  we define its complexification as a function in $X\times D_{\ell}$,  
\[
X\times D_{\ell} \ni (x,z) \mapsto U(x,z) := u_{\log |z|}(x),
\]
where $D_{\ell}:= \{z\in \mathbb{C} \setdef 1< |z|<e^{\ell}\}$, and $\pi$ is the projection on $X$. 
\begin{definition}
	We say that $t \to u_t$ is a subgeodesic segment (resp. ray) if  $U(x,z) \in \textup{PSH}(X \times D_{\ell},\pi^{*}\theta)$ with $\ell <\infty$ (resp. $U(x,z) \in \textup{PSH}(X \times D_{\infty},\pi^{*}\theta)$). 
\end{definition}

Before proceeding, let us recall the Kiselman minimum principle adapted to our context \cite[Theorem 2.2]{Kis78}:

\begin{theorem}\label{thm: Kiselman} Let $(0,\ell) \ni t \to u_t \in \textup{PSH}(X,\theta)$ be a subgeodesic segment or ray. Given $x \in X$ define $v(x): = \inf_{t \in (0,\ell)} u_t(x)$. Then $v\in \textup{PSH}(X,\theta)$, with  $v$ possible equal to $-\infty$ everywhere.
\end{theorem}
\begin{proof} This is indeed a straightforward consequence of the (local) Kiselman principle, applicable for domains of $\mathbb{C}^m$. A simple proof of the local result can be found in \cite[Theorem I.7.5]{De12}. The general result  follows after an analysis of $U \in \textup{PSH}(X \times D_l,\pi^* \theta)$ in coordinate patches of $X$. 
\end{proof}

\begin{definition}
	For  $\varphi,\psi \in \PSH(X,\theta)$, we let $\mathcal{S}_{(0,\ell)}(\varphi,\psi)$ denote the set of  all subgeodesic segments $(0,\ell) \ni t \mapsto u_t\in \PSH(X,\theta)$ that satisfy $\limsup_{t\to 0} u_t\leq \varphi$ and $\limsup_{t\to \ell} u_t\leq \psi$. 
\end{definition}

Now, for $\varphi,\psi \in \PSH(X,\theta)$,  the \emph{weak (Mabuchi) geodesic segment} connecting $\varphi$ and $\psi$ is defined as the upper envelope of all subgeodesic segments in $\mathcal{S}_{(0,\ell)}(\varphi,\psi)$, i.e.
\begin{equation}\label{eq: weak_geod_def}
\varphi_t := \sup_{\mathcal{S}_{(0,\ell)}(\varphi,\psi)} u_t.
\end{equation}

For general $\varphi,\psi \in \textup{PSH}(X,\theta)$ it is possible that $\varphi_t$ is identically equal to $-\infty$ for any $t \in (0,\ell)$. But in the case when $\varphi, \psi \in \mathcal{E}^1(X,\theta)$, it was shown in \cite[Theorem 2.10]{DDL16} that $P(\varphi,\psi) \in \mathcal{E}^1(X,\theta)$. Since $P(\varphi,\psi) \leq \varphi_t$, we obtain that $\varphi_t \in \mathcal{E}^1(X,\theta)$ for any $t \in [0,\ell]$ \cite[Proposition 2.14]{BEGZ10}. By $\Bbb R$-invariance each subgeodesic segment is in particular $t$-convex, hence we get that 
\begin{equation}\label{eq: conv_upper_bound}
\varphi_t\leq \left (1-\frac{t}{\ell}\right )\varphi + \frac{t}{\ell} \psi, \ \forall t\in [0,\ell]. 
\end{equation}
Consequently the upper semicontinuous regularization (with respect to both variables $x,z$) of $t \to \varphi_t$ is again in $\mathcal{S}_{(0,\ell)}(\varphi,\psi)$, hence so is $t \to \varphi_t$.  

In particular, if $\varphi$ and $\psi$ have minimal singularity type, the function $h:=|\varphi-\psi|$ is bounded and $t \to u_t:=\max\big(\varphi-\|h\|_{L^\infty}\frac{t}{\ell}, \psi-\|h\|_{L^\infty}\frac{\ell-t}{\ell}\big)$ is a subgeodesic. Therefore  $\varphi_t\geq u_t$ for any $t\in(0,\ell)$ and hence $\varphi_t\in \psh(X, \theta)$ has minimal singularity type for any $t\in (0,\ell)$. Moreover, by this last fact and \eqref{eq: conv_upper_bound} it follows that $\lim_{t \to 1} \varphi_t=\varphi$ and $\lim_{t \to \ell} \varphi_t=\psi$. Consequently, in the particular case when $\varphi,\psi$ have minimal singularity type, it is natural to extend the curves $(0,\ell) \ni t \to \varphi_t \in \textup{PSH}(X,\theta)$ at the endpoints by $\varphi_0 :=\varphi$ and $\varphi_1:=\psi$. As we will see, a similar pattern will arise when $\varphi,\psi \in \mathcal E^1(X,\omega)$.

Collecting and expanding some of the above thoughts, we recall the following lemma \cite[Lemma 3.1]{DDL16}:

\begin{lemma}
        \label{lem: boundary limit of weak geodesic}
        Let $t \to \varphi_t$ be the weak Mabuchi geodesic joining $\varphi_0,\varphi_{\ell} \in \PSH(X,\theta)$ with minimal singularity type, constructed as above. Then for $C:= \sup_X |\varphi_\ell - \varphi_0 |/\ell>0$ we have that 
        \begin{equation*}
               % \label{eq: boundary limit of weak geodesic}
                |\varphi_t -\varphi_{t'}| \leq C|t-t'|, \ \  t,t'\in [0,\ell].
        \end{equation*}
Additionally, for the complexification $\Phi(x,z):= \varphi_{\log|z|}(x)$ we have 
\begin{equation*}%\label{eq: geodesicequation}
(\pi^*\theta +i \partial \bar \partial \Phi)^{n+1}=0 \textup{ in}\; \Amp(\{\theta\}) \times D_\ell,
\end{equation*} 
where equality is understood in the weak sense of measures. 
\end{lemma}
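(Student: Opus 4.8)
The plan is to prove the two assertions in turn, deriving the Lipschitz-in-time estimate first and then feeding it into a Perron-envelope argument for the Monge--Amp\`ere equation over the ample locus.

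\textbf{Lipschitz bound.} Put $A:=\ell^{-1}\|\varphi_0-\varphi_\ell\|_{L^\infty(X)}$, which is finite since $\varphi_0,\varphi_\ell$ have minimal singularities and hence the same singularity type. First I would record the two-sided bounds $|\varphi_t-\varphi_0|\le At$ and $|\varphi_t-\varphi_\ell|\le A(\ell-t)$ as functions on $X$: the upper estimates follow from the convexity inequality $\varphi_t\le(1-t/\ell)\varphi_0+(t/\ell)\varphi_\ell$ recalled just before the lemma, while the lower estimates follow by testing the envelope defining $\varphi_t$ against the explicit subgeodesic $t\mapsto\max\bigl(\varphi_0-At,\ \varphi_\ell-A(\ell-t)\bigr)$, which indeed lies in $\mathcal{S}_{[0,\ell]}(\varphi_0,\varphi_\ell)$ by the choice of $A$. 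Since for each fixed $x$ the function $t\mapsto\varphi_t(x)$ is convex (being the value of the $S^1$-invariant $\pi^*\theta$-psh complexification $\Phi$, it is convex in $\log|z|=t$), the monotonicity of difference quotients gives, for $0<t<t'<\ell$,
\[
-A\ \le\ \frac{\varphi_t(x)-\varphi_0(x)}{t}\ \le\ \frac{\varphi_{t'}(x)-\varphi_t(x)}{t'-t}\ \le\ \frac{\varphi_\ell(x)-\varphi_{t'}(x)}{\ell-t'}\ \le\ A ,
\]
so $|\varphi_{t'}-\varphi_t|\le A|t'-t|$ pointwise; the cases $t=0$ or $t'=\ell$ are already covered by the two-sided bounds. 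This is the first assertion with $C=A$.

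\textbf{Monge--Amp\`ere equation.} For the second part I would argue locally on $\Amp(\{\theta\})\times D_\ell$. By the first part and the fact (recalled before the lemma) that each $\varphi_t$ has minimal singularities, $\Phi$ is locally bounded on $\Amp(\{\theta\})\times D_\ell$, so $(\pi^*\theta+dd^c\Phi)^{n+1}$ is a well-defined Bedford--Taylor measure there. Fix a point of this set and choose an $S^1$-invariant ``cylindrical shell'' neighbourhood $U=B\times\{a<|z|<b\}$, where $B$ is a coordinate ball relatively compact in $\Amp(\{\theta\})$ on which $\pi^*\theta=dd^c g$ for a smooth local potential $g$, and $1<a<b<e^\ell$. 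The domain $U$ is a product of hyperconvex domains, hence hyperconvex, so the Perron--Bremermann envelope $h$ with boundary values $(\Phi+g)|_{\partial U}$ is a maximal locally bounded psh function on $U$ with $(dd^c h)^{n+1}=0$, satisfying $h\ge\Phi+g$ on $U$; as $U$ and the boundary data are $S^1$-invariant and the $S^1$-action is holomorphic, $h$ is $S^1$-invariant too. Gluing, $\tilde\Phi:=h-g$ on $U$ and $\tilde\Phi:=\Phi$ elsewhere defines an $S^1$-invariant $\pi^*\theta$-psh function on $X\times D_\ell$ (a standard max-type gluing, valid even if $\Phi$ is merely bounded), i.e. a subgeodesic $t\mapsto\tilde\varphi_t$ that agrees with $\varphi_t$ near $t=0$ and $t=\ell$ because $U\Subset X\times D_\ell$; hence $\tilde\varphi\in\mathcal{S}_{[0,\ell]}(\varphi_0,\varphi_\ell)$. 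By maximality of the envelope $\tilde\varphi_t\le\varphi_t$, which together with $\tilde\Phi\ge\Phi$ forces $\tilde\Phi=\Phi$, so $h=\Phi+g$ on $U$ and $(\pi^*\theta+dd^c\Phi)^{n+1}=(dd^c h)^{n+1}=0$ on $U$. Covering $\Amp(\{\theta\})\times D_\ell$ by such shells yields the claim.

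\textbf{Main obstacle.} The subtle point is this Perron step: the Dirichlet-problem neighbourhood must be taken $S^1$-invariant (an honest ball in $X\times D_\ell$ would destroy invariance), so that the improved competitor $\tilde\varphi$ is again a subgeodesic lying in $\mathcal{S}_{[0,\ell]}(\varphi_0,\varphi_\ell)$, and one must know the homogeneous complex Monge--Amp\`ere Dirichlet problem is solvable on these cylindrical shells with only bounded (a priori non-continuous) boundary data --- which is exactly where hyperconvexity and the characterization of maximal bounded psh functions via the vanishing of their Monge--Amp\`ere measure enter. The remaining ingredients (gluing, convexity in $t$, the explicit sub/super-solutions) are routine.
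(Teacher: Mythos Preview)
The paper does not supply its own proof of this lemma; it is simply quoted from \cite[Lemma~3.1]{DDL16}. Your argument is the standard one and is correct: the Lipschitz bound via the explicit subgeodesic barrier $\max(\varphi_0-At,\varphi_\ell-A(\ell-t))$ combined with $t$-convexity, followed by a local balayage on $S^1$-invariant cylindrical shells inside $\Amp(\{\theta\})\times D_\ell$. The point you flag as the ``main obstacle'' is indeed the only delicate step, and your resolution is the right one; alternatively, one can sidestep the Dirichlet problem with discontinuous data by observing directly that $\Phi$ is maximal on such a shell (any locally bounded competitor dominated by $\Phi$ near the boundary can be $S^1$-averaged after gluing, yielding a subgeodesic in $\mathcal{S}_{[0,\ell]}(\varphi_0,\varphi_\ell)$ and hence $\le\Phi$), and then invoking the Bedford--Taylor equivalence between maximality and vanishing Monge--Amp\`ere mass for locally bounded psh functions.
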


Before proceeding we note that due to our ``Perron type" definition of weak geodesic segments \eqref{eq: weak_geod_def} we automatically get the following comparison principle:

\begin{prop}[Comparison principle]\label{prop: DDL_comp_princ_geod} Let $u_0,u_1,v_0,v_1 \in \textup{PSH}(X,\omega)$ such that $v_0 \leq u_0$ and $v_1 \leq u_1$. If $(0,1) \ni t \to u_t \in \textup{PSH}(X,\theta)$ is the weak geodesic connecting $u_0,u_1$ and $(0,1) \ni t \to v_t \in \textup{PSH}(X,\theta)$ is a weak subgeodesic connecting $v_0,v_1$ then $v_t \leq u_t$ for any $t \in [0,1]$.
\end{prop}

Due to Proposition \ref{prop: DDL_comp_princ_geod}, if $(0,\ell)\ni t \rightarrow \varphi_t \in \textup{PSH}(X,\theta)$ is a weak geodesic segment with minimal singularity type and $a,b,c,d\in (0,\ell)$, then exactly the same arguments  as in \cite[Theorem 3.4]{Dar14} give that  
\begin{equation}\label{eq: m_M_normalization}
m_\varphi:=\inf_{\textup{Amp}(\{\theta\})} \frac{\varphi_a-\varphi_b}{a-b}= \inf_{\textup{Amp}(\{\theta\})} \frac{\varphi_c-\varphi_d}{c-d}, \ \  M_\varphi:=\sup_{\textup{Amp}(\{\theta\})} \frac{\varphi_a-\varphi_b}{a-b}= \sup_{\textup{Amp}(\{\theta\})} \frac{\varphi_c-\varphi_d}{c-d}.
\end{equation}

\medskip

A curve $[0, +\infty)\ni t \rightarrow \varphi_t\in \psh(X, \theta)$ is a \emph{weak geodesic ray}, with minimal singularity type, if for any fixed $\ell>0$ $[0, \ell]\ni t\rightarrow \varphi_t \in \psh(X,\theta)$ is a weak geodesic segment joining $\varphi_0$ and $\varphi_\ell$, potentials with minimal singularity.

\section{The metric space $(\mathcal{E}^1,d_1)$}\label{section:  distance d1}

Let $u,v\in \mathcal{E}^1$. It follows from \cite[Theorem 2.10]{DDL16} that $P(u,v)$ belongs to $\Ec^1$.  In this section we will introduce and study the properties of a complete metric structure on $\mathcal E^1$. The metric will be defined by the following expression
\begin{equation}\label{eq: d_1_eq}
d_1(u,v) := \AM(u) + \AM(v) -2 \AM(P(u,v)). 
\end{equation}
Before we prove that this expression does indeed give a metric, we provide some motivation. When $\theta$ is K\"ahler, it is possible to introduce an $L^1$ Finsler structure on the space of smooth K\"ahler potentials, (see \cite[Section 1.1]{Dar15}). As shown in \cite[Corollary 4.14]{Dar15} the path length metric associated to this Finsler structure is given by \eqref{eq: d_1_eq}. We will show below that it is possible to start with  \eqref{eq: d_1_eq} and avoid inifinite dimensional Finsler geometry all together. This line of thought is especially fruitful in the case of big classes, where the space of smooth K\"ahler potentials has no analog to begin with.

\subsection{$d_1$ is a metric}
The goal of this section is to prove that $d_1$ defines a metric on $\Ec^1(X,\theta)$.
The following properties follow directly from the definition.
\begin{lemma}\label{lem: basic properties} Let $u,v\in \Ec^1(X,\theta)$. Then the following hold:\\
	(i) If $u\leq v$ then $d_1(u,v) = \AM(v)- \AM(u)$.\\
	(ii) If $u\leq v\leq w$ then $d_1(u,v)+d_1(v,w)= d_1(u,w)$. \\	
	(iii)(Pythagorean formula) $d_1(u,v) =d_1(u,P(u,v))+d_1(v,P(u,v))$. 
\end{lemma}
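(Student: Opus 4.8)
\textbf{Proof plan for Lemma \ref{lem: basic properties}.}

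The plan is to derive all three statements directly from the definition \eqref{eq: d_1_eq} together with the monotonicity and additivity properties of $\AM$ recorded in Proposition \ref{prop: basic I energy} (which extends Theorem \ref{thm: basic I energy} to $\Ec^1$), and the defining property of the rooftop envelope $P(u,v)$ as the largest $\theta$-psh function below $\min(u,v)$.

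\emph{Part (i).} If $u \leq v$, then $\min(u,v) = u$, and since $u$ is itself $\theta$-psh we get $P(u,v) = u$. Substituting into \eqref{eq: d_1_eq} gives $d_1(u,v) = \AM(u) + \AM(v) - 2\AM(u) = \AM(v) - \AM(u)$, which is nonnegative by the monotonicity of $\AM$ (Theorem \ref{thm: basic I energy}(iii), extended in Proposition \ref{prop: basic I energy}).

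\emph{Part (ii).} Suppose $u \leq v \leq w$. Applying (i) three times: $d_1(u,v) = \AM(v) - \AM(u)$, $d_1(v,w) = \AM(w) - \AM(v)$, and $d_1(u,w) = \AM(w) - \AM(u)$. Adding the first two gives $\AM(w) - \AM(u) = d_1(u,w)$, as desired. This is a pure telescoping identity, with no obstacle.

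\emph{Part (iii).} Here note that $P(u,v) \leq u$ and $P(u,v) \leq v$, so both pairs $(u, P(u,v))$ and $(v, P(u,v))$ are comparable. By (i), $d_1(u, P(u,v)) = \AM(u) - \AM(P(u,v))$ and $d_1(v, P(u,v)) = \AM(v) - \AM(P(u,v))$. Summing yields $\AM(u) + \AM(v) - 2\AM(P(u,v))$, which is exactly $d_1(u,v)$ by \eqref{eq: d_1_eq}. The only point requiring care — and the closest thing to an obstacle — is ensuring $P(u,v) \in \Ec^1$ so that $\AM(P(u,v))$ is finite and the manipulations are legitimate; but this is precisely the content of \cite[Theorem 2.10]{DDL16} cited at the start of the section, so there is nothing further to prove. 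All three parts are then immediate.
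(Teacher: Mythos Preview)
Your proof is correct and follows exactly the same approach as the paper: (i) uses $P(u,v)=u$ when $u\leq v$, (ii) telescopes via (i), and (iii) follows directly from the definition after noting $P(u,v)\leq u,v$. The paper's proof is just a terser version of what you wrote.
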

\begin{proof}
The first statement is straightforward from the definition since $P(u,v)=u$ if $u\leq v$. The second statement easily follows from $(i)$. The last statement follows from the definition of $d_1$.
\end{proof}

The following formula whose proof builds on ideas from \cite{LN15} will be crucial in the sequel.  

\begin{prop}
	\label{prop: Darvas formula 1}
	Let $u,v$ be $\theta$-psh functions with minimal singularity type. For $t\in [0,1]$ define $\varphi_t:= P((1-t)u+tv, v)$.  Then 
	\[
	\frac{d}{dt} \AM(\varphi_t) = \int_X (v-\min(u,v)) \theta_{\varphi_t}^n, \ \forall t\in [0,1]. 
	\]
\end{prop}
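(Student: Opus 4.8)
The plan is to prove the derivative formula by differentiating the Monge-Amp\`ere energy along the curve $t\mapsto \varphi_t$, using the differentiability properties of $\AM$ together with the fact that the Monge-Amp\`ere measure of a rooftop envelope is concentrated on the contact set. First I would check that $t\mapsto \varphi_t$ is a reasonable curve: since $u,v$ have minimal singularities, so do $(1-t)u+tv$ and hence $\varphi_t=P((1-t)u+tv,v)$, and moreover $\varphi_t\le v$. A Lipschitz estimate in $t$ follows from $|\varphi_t-\varphi_s|\le |t-s|\,\|u-v\|_{L^\infty}$ (the envelope of a Lipschitz-in-$t$ family is Lipschitz-in-$t$), so $t\mapsto\AM(\varphi_t)$ is locally Lipschitz and differentiable a.e.

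The core computation is the following. Fix $t$ and $s$ near $t$. By Theorem \ref{thm: basic I energy}(i),
\[
\AM(\varphi_s)-\AM(\varphi_t)=\frac{1}{n+1}\sum_{k=0}^n\int_X(\varphi_s-\varphi_t)\,\theta_{\varphi_s}^k\wedge\theta_{\varphi_t}^{n-k},
\]
and the two-sided bound in Theorem \ref{thm: basic I energy}(ii) gives
\[
\int_X(\varphi_s-\varphi_t)\,\theta_{\varphi_s}^n\ \le\ \AM(\varphi_s)-\AM(\varphi_t)\ \le\ \int_X(\varphi_s-\varphi_t)\,\theta_{\varphi_t}^n.
\]
So it suffices to show that $\frac{1}{s-t}(\varphi_s-\varphi_t)\,\theta_{\varphi_t}^n \to (v-\min(u,v))\,\theta_{\varphi_t}^n$ (and similarly with $\theta_{\varphi_s}^n$, after a continuity argument for the measures). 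The key point is the structure of $\theta_{\varphi_t}^n$: by \eqref{eq: env_measure_concentrate} applied to $\psi=(1-t)u+tv$ and $\phi=v$,
\[
\theta_{\varphi_t}^n\le \mathbbm{1}_{\{\varphi_t=(1-t)u+tv\}}\,\theta_{(1-t)u+tv}^n+\mathbbm{1}_{\{\varphi_t=v\}}\,\theta_v^n.
\]
On the set $\{\varphi_t=v\}$ we have $v\le (1-t)u+tv$, i.e. $v\le u$, so there $v=\min(u,v)$ and $v-\min(u,v)=0$. On the set $\{\varphi_t=(1-t)u+tv\}$ we have $(1-t)u+tv\le v$, i.e. $u\le v$, so there $\min(u,v)=u$ and $v-\min(u,v)=v-u=\frac{d}{dt}\big((1-t)u+tv\big)$; and pointwise on this contact set, $\varphi_s-\varphi_t\le ((1-s)u+sv)-((1-t)u+tv)=(t-s)(v-u)$ while $\varphi_s-\varphi_t\ge \varphi_t+(t-s)(v-u)_{-}\dots$ — more cleanly, one uses $\varphi_s\ge P(\varphi_t+(s-t)(v-u),v)$-type lower bounds to sandwich $\frac{\varphi_s-\varphi_t}{s-t}$ between $v-u$ (from below, against $\theta_{\varphi_t}^n$-a.e. on the contact set) and a quantity converging to $v-u$. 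Passing to the limit and recombining the two pieces yields $\frac{d}{dt}\AM(\varphi_t)=\int_X(v-\min(u,v))\,\theta_{\varphi_t}^n$.

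Concretely I would organize it as: (1) establish the Lipschitz bound and hence a.e. differentiability; (2) prove the one-sided inequalities $\liminf$ and $\limsup$ of the difference quotient of $\AM(\varphi_t)$ are squeezed by $\int_X \frac{\varphi_s-\varphi_t}{s-t}\theta_{\varphi_t}^n$ and $\int_X\frac{\varphi_s-\varphi_t}{s-t}\theta_{\varphi_s}^n$; (3) show both these integrals converge to $\int_X(v-\min(u,v))\theta_{\varphi_t}^n$, using \eqref{eq: env_measure_concentrate} to localize to the contact set where the integrand simplifies, plus weak continuity of $\theta_{\varphi_s}^n\to\theta_{\varphi_t}^n$ (from Lipschitz convergence of $\varphi_s$ and stability of Monge-Amp\`ere along uniformly convergent sequences of minimal-singularity potentials, cf. Theorem \ref{thm: basic I energy} / \cite{BEGZ10}) to handle the $\theta_{\varphi_s}^n$ version; (4) conclude the formula holds for a.e.\ $t$, then upgrade to all $t\in[0,1]$ by observing both sides are continuous in $t$ — the left side because $\AM\circ\varphi_\cdot$ is concave (it is an infimum-type construction / follows from concavity of $\AM$ and convexity-type behavior of $\varphi_t$), hence has one-sided derivatives everywhere, and the right side because $t\mapsto\int_X(v-\min(u,v))\theta_{\varphi_t}^n$ is continuous by weak convergence of the measures.

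The main obstacle I expect is step (3): carefully justifying that the difference quotient $\frac{\varphi_s-\varphi_t}{s-t}$ converges to $v-u=v-\min(u,v)$ \emph{$\theta_{\varphi_t}^n$-almost everywhere on the contact set} $\{\varphi_t=(1-t)u+tv\}$, and controlling it uniformly so as to pass the limit through the integral (dominated convergence, using the Lipschitz bound $|\varphi_s-\varphi_t|\le|s-t|\,\|u-v\|_{L^\infty}$ as the dominating function, which is fine since $\theta_{\varphi_t}^n$ is a finite measure). The subtlety is that $\varphi_s$ is itself an envelope, so the naive pointwise comparison only gives one-sided bounds; the resolution is exactly the contact-set concentration from \eqref{eq: env_measure_concentrate}, which forces $\theta_{\varphi_t}^n=\theta_{(1-t)u+tv}^n$ on the relevant set, after which the bounds from both sides pinch the quotient to $v-u$. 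This is the "ideas from \cite{LN15}" referred to in the statement, and reproducing that argument in the big setting is the real content.
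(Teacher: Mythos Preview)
Your overall strategy matches the paper's --- sandwich the difference quotient of $\AM$ between $\int_X(\varphi_s-\varphi_t)\theta_{\varphi_s}^n$ and $\int_X(\varphi_s-\varphi_t)\theta_{\varphi_t}^n$ using concavity of $\AM$, then exploit the contact-set concentration \eqref{eq: env_measure_concentrate} and weak convergence $\theta_{\varphi_s}^n\to\theta_{\varphi_t}^n$. But your step (3) has a genuine gap, and you are making it harder than necessary.

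The problem is your attempt to analyze the pointwise behaviour of $\frac{\varphi_s-\varphi_t}{s-t}$ on the contact set of $\varphi_t$. You get the upper bound $\varphi_s-\varphi_t\le(s-t)(v-u)$ there cleanly, but for the matching lower bound you only offer the vague ``$\varphi_s\ge P(\varphi_t+(s-t)(v-u),v)$-type lower bounds,'' which does not obviously yield what you need on that set (the contact set for $\varphi_s$ is not the contact set for $\varphi_t$). You also claim \eqref{eq: env_measure_concentrate} ``forces $\theta_{\varphi_t}^n=\theta_{(1-t)u+tv}^n$ on the relevant set,'' which is not what that inequality says.

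The paper's resolution avoids all pointwise limit analysis. Set $f_t:=\min((1-t)u+tv,\,v)$ and observe the \emph{exact} identity $f_{t+s}-f_t=s\,(v-\min(u,v))$. The concentration result gives $\textup{supp}\,\theta_{\varphi_t}^n\subset\{\varphi_t=f_t\}$, so on that support you may replace $\varphi_t$ by $f_t$; combined with the global inequality $\varphi_{t+s}\le f_{t+s}$ this gives
\[
\int_X(\varphi_{t+s}-\varphi_t)\,\theta_{\varphi_t}^n=\int_X(\varphi_{t+s}-f_t)\,\theta_{\varphi_t}^n\le\int_X(f_{t+s}-f_t)\,\theta_{\varphi_t}^n=s\int_X(v-\min(u,v))\,\theta_{\varphi_t}^n,
\]
which is already the upper bound for the right derivative. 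For the lower bound the paper does the \emph{symmetric} substitution: use $\textup{supp}\,\theta_{\varphi_{t+s}}^n\subset\{\varphi_{t+s}=f_{t+s}\}$ together with $\varphi_t\le f_t$ to get $\int_X(\varphi_{t+s}-\varphi_t)\,\theta_{\varphi_{t+s}}^n\ge s\int_X(v-\min(u,v))\,\theta_{\varphi_{t+s}}^n$, and then pass to the limit by weak convergence against the bounded quasi-continuous function $v-\min(u,v)$. No pointwise analysis of difference quotients, no dominated convergence argument, and --- since this proves the formula at every $t$ directly --- your step (4) (and the unproven concavity of $t\mapsto\AM(\varphi_t)$) is unnecessary.
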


\begin{proof}
	We will only prove the formula for the right derivative as the same argument can be applied to treat the left derivative. Fix $t\in [0,1]$ and $s \in \Bbb R$ small such that $s  + t \in [0,1]$. For convenience we set $f_t(x):= \min((1-t)u(x)+tv(x),v(x)), \ x \in X, \ t\in [0,1]$. It follows from  \eqref{eq: env_measure_concentrate} that $\theta_{\varphi_t}^n$ is supported on the set $\{\varphi_t=f_t\}$.  By concavity of the Monge-Amp\`ere energy $\AM$ (Theorem \ref{thm: basic I energy}$(ii)$) we have that 
	\begin{eqnarray}\label{eq: firstineq}
		 \AM(\varphi_{t+s}) - \AM(\varphi_t) &\leq & \int_X (\varphi_{t+s}-\varphi_t) \theta_{\varphi_t}^n = \int_X (\varphi_{t+s}-f_t) \theta_{\varphi_t}^n \nonumber \\
	&\leq &  \int_X (f_{t+s}-f_t) \theta_{\varphi_t}^n=s \int_X  (v-\min(u,v)) \theta_{\varphi_t}^n,
	\end{eqnarray}
where in the last inequality we used that $f_{t+s}-f_t = s (v-\min(u,v))$. We use the same argument to prove the following inequality: 
	\begin{eqnarray}\label{eq: secineq}
		 \AM(\varphi_{t+s}) - \AM(\varphi_t) &\geq & \int_X (\varphi_{t+s}-\varphi_t) \theta_{\varphi_{t+s}}^n = \int_X (f_{t+s}-\varphi_t) \theta_{\varphi_{t+s}}^n \nonumber\\
	&\geq &  \int_X (f_{t+s}-f_t) \theta_{\varphi_{t+s}}^n = s  \int_X (v-\min(u,v)) \theta_{\varphi_{t+s}}^n.
	\end{eqnarray}
To continue, we notice that there exists $C>0$ such that $V_\theta - C \leq \varphi_t \leq V_\theta + C, \  t \in [0,1]$, in particular all these potentials have minimal singularity type. In addition to this, $\varphi_{t + s} \to \varphi_t$ uniformly, as $s \to 0$.

Moreover, since $v-\min(u,v)$ is a bounded quasi continuous function on $X$, the last statement of \cite[Theorem 2.3]{DDL17} is applicable to \eqref{eq: firstineq} and \eqref{eq: secineq} as $s \to 0$, to conclude that 
	\[
	\lim_{s\to 0} \frac{ \AM(\varphi_{t+s})- \AM(\varphi_t)}{s} =  \int_X (v-\min(u,v)) \theta_{\varphi_{t}}^n. 
	\]
	This completes the proof.
\end{proof}

\begin{coro}
	\label{cor: Darvas formula 2}
	Let $u,v,\varphi_t$ as in Proposition \ref{prop: Darvas formula 1}. Then 
	\[
	\AM(v)-\AM(P(u,v)) = \int_0^1 \int_X (v-\min(u,v)) \theta_{\varphi_t}^n dt. 
	\]
\end{coro}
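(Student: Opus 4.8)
The plan is to deduce the identity directly from Proposition \ref{prop: Darvas formula 1} via the fundamental theorem of calculus; the only genuine work is to check that $t\mapsto \AM(\varphi_t)$ is regular enough to integrate its derivative. First I would identify the endpoints of the curve: since $(1-0)u+0\cdot v=u$ we have $\varphi_0=P(u,v)$, and since $v$ is itself $\theta$-psh, $\varphi_1=P(v,v)=P_\theta(v)=v$. Hence the left-hand side equals $\AM(\varphi_1)-\AM(\varphi_0)$, and the goal is to recognize this as $\int_0^1 \frac{d}{dt}\AM(\varphi_t)\,dt$.

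Next I would verify that each $\varphi_t$ has minimal singularities. Writing $f_t:=\min((1-t)u+tv,v)$ and choosing $C>0$ with $u,v\geq V_\theta-C$ (possible because $u,v$ have minimal singularities), convexity gives $(1-t)u+tv\geq V_\theta-C$, so $f_t\geq V_\theta-C$ and therefore $\varphi_t=P(f_t)\geq V_\theta-C$. In particular $\int_X\theta_{\varphi_t}^n=\vol(\{\theta\})=1$ for every $t\in[0,1]$. Since $0\leq v-\min(u,v)$ is moreover bounded on $X$, Proposition \ref{prop: Darvas formula 1} tells us that $g(t):=\AM(\varphi_t)$ is differentiable on $[0,1]$ with
\[
0\leq g'(t)=\int_X (v-\min(u,v))\,\theta_{\varphi_t}^n\leq \|v-\min(u,v)\|_{L^\infty(X)}.
\]
Thus $g$ is Lipschitz, hence absolutely continuous, so $g(1)-g(0)=\int_0^1 g'(t)\,dt$, which is exactly the asserted formula.

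I do not expect any real obstacle here beyond Proposition \ref{prop: Darvas formula 1} itself; the single point requiring care is the passage from ``differentiable everywhere'' to the Newton–Leibniz formula, which is why I would record the uniform bound on $g'$ (equivalently, the Lipschitz property of $g$). As an alternative justification one can observe that $t\mapsto f_t$ is pointwise non-decreasing — on $\{u\leq v\}$ it equals $(1-t)u+tv$, which increases in $t$, and on $\{u>v\}$ it is the constant $v$ — so $t\mapsto\varphi_t$ is non-decreasing, hence so is $g$ by monotonicity of $\AM$; combined with the everywhere-differentiability this again yields the fundamental theorem of calculus. Either way the computation is immediate once Proposition \ref{prop: Darvas formula 1} is in hand.
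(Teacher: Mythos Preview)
Your argument is correct and matches the paper's approach: the corollary is stated there without proof, as an immediate consequence of integrating the derivative formula in Proposition~\ref{prop: Darvas formula 1} over $[0,1]$. Your additional care in verifying that $\varphi_0=P(u,v)$, $\varphi_1=v$, and that $t\mapsto \AM(\varphi_t)$ is Lipschitz (so the fundamental theorem of calculus applies) is a welcome clarification of what the paper leaves implicit.
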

%\begin{proof}
%	The function $t\mapsto I(P(f_t))$ is of class $\mathcal{C}^1$ in $[0,1]$ thanks to Proposition \ref{prop: Darvas formula 1}  and \cite[Lemma 4.1]{DDL17}. 
%\end{proof}
As a consequence we obtain the following result, which is an original result in the particular case of K\"ahler structures as well. 
\begin{prop}
	\label{prop: Darvas formula 3}
	If  $u,v\in \Ec^1(X,\theta)$ then $d_1(\max(u,v),u)\geq d_1(v,P(u,v))$. 
\end{prop}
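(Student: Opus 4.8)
The plan is to reduce the inequality to the integral formula of Corollary~\ref{cor: Darvas formula 2} applied twice, once on each side, and then compare the two integrals of Monge-Amp\`ere measures along the respective envelope curves. First I would reduce to the case where $u,v$ have minimal singularities: indeed, by approximating $u$ and $v$ from above by $u^C:=\max(u,V_\theta-C)$ and $v^C:=\max(v,V_\theta-C)$, using that $P(\cdot,\cdot)$, $\max(\cdot,\cdot)$ and $\AM$ behave well under such decreasing limits (Proposition~\ref{prop: BEGZ convergence I and I1}, together with the fact that $P(u^C,v^C)\searrow P(u,v)$ and $\max(u^C,v^C)\searrow\max(u,v)$), the general inequality follows from the minimal-singularities case by passing to the limit. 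So assume from now on that $u,v$ have minimal singularities, and without loss of generality $u,v\le 0$.

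Next, I would apply Corollary~\ref{cor: Darvas formula 2} to the right-hand side: with $\varphi_t:=P((1-t)u+tv,v)$ we have
\[
\AM(v)-\AM(P(u,v)) = \int_0^1\!\!\int_X (v-\min(u,v))\,\theta_{\varphi_t}^n\,dt,
\]
and since $d_1(v,P(u,v))=\AM(v)-\AM(P(u,v))$ by Lemma~\ref{lem: basic properties}(i) (as $P(u,v)\le v$), this computes the right-hand side. For the left-hand side, note $d_1(\max(u,v),u)=\AM(\max(u,v))-\AM(u)$, again by Lemma~\ref{lem: basic properties}(i). The key observation is that the same envelope construction applies with the roles reorganized: consider $\tilde\varphi_t:=P((1-t)\max(u,v)+tu,\,u)$, a curve running from $\max(u,v)$ down to $P(\max(u,v),u)=u$ (since $u\le\max(u,v)$, the rooftop envelope is just $u$). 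Then Corollary~\ref{cor: Darvas formula 2} (applied with the pair $(\max(u,v),u)$ in place of $(u,v)$, so the ``$v$'' there is $u$ and the ``$u$'' there is $\max(u,v)$) gives
\[
\AM(\max(u,v))-\AM(u)=\int_0^1\!\!\int_X\big(u-\min(\max(u,v),u)\big)\,\theta_{\tilde\varphi_t}^n\,dt = \int_0^1\!\!\int_X\big(u-\min(u,v)\big)\,\theta_{\tilde\varphi_t}^n\,dt,
\]
using $\min(\max(u,v),u)=\min(u,v)$. Wait — here $u-\min(u,v)\ge 0$ but this is the ``wrong'' integrand; one wants $v-\min(u,v)$. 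The correct move is instead to run the curve from $\max(u,v)$ towards $v$: set $\tilde\varphi_t:=P((1-t)\max(u,v)+tv,\,v)$, which connects $\max(u,v)$ to $P(\max(u,v),v)=v$, and Corollary~\ref{cor: Darvas formula 2} yields
\[
d_1(\max(u,v),u) \ \ge\ \AM(\max(u,v))-\AM(v)\ \text{is false in general},
\]
so one must be more careful: the clean identity is $d_1(\max(u,v),v)=\AM(\max(u,v))-\AM(v)=\int_0^1\int_X(v-\min(\max(u,v),v))\theta_{\tilde\varphi_t}^n dt=0$ trivially, which is useless.

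So the right strategy is to compare along the \emph{same} curve. By Pythagoras (Lemma~\ref{lem: basic properties}(iii)) and symmetry of $d_1$, $d_1(\max(u,v),u)+d_1(\max(u,v),v)=d_1(u,v)=d_1(u,P(u,v))+d_1(v,P(u,v))$, so the claim $d_1(\max(u,v),u)\ge d_1(v,P(u,v))$ is equivalent to $d_1(\max(u,v),v)\le d_1(u,P(u,v))$; by relabeling $u\leftrightarrow v$ the two forms are the same, so it suffices to prove the inequality. The key point, then, is to exhibit for both $\AM(\max(u,v))-\AM(u)$ (which equals $d_1(\max(u,v),u)$) and for $\AM(v)-\AM(P(u,v))$ (which equals $d_1(v,P(u,v))$) an integral representation over $t\in[0,1]$ with the \emph{same} integrand $v-\min(u,v)\ge 0$ and over Monge-Amp\`ere measures $\theta_{\psi_t}^n$, $\theta_{\varphi_t}^n$ of envelope curves, and then show $\int_0^1\int_X (v-\min(u,v))\theta_{\psi_t}^n\,dt\ge\int_0^1\int_X(v-\min(u,v))\theta_{\varphi_t}^n\,dt$. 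For the right side this is Corollary~\ref{cor: Darvas formula 2} with $\varphi_t=P((1-t)u+tv,v)$. For the left side, I would use the curve $\psi_t:=P((1-t)\max(u,v)+tv,\,v)$: since $(1-t)\max(u,v)+tv\ge (1-t)u+tv$, we get $\psi_t\ge\varphi_t$, and since $\min((1-t)\max(u,v)+tv,v)=v+(1-t)\max(0,u-v)$... hmm, $\min(\max(u,v),v)=v$, so the increment $f_{t+s}-f_t$ here is $s(v-v)=0$ — again degenerate. The genuinely correct curve for the left-hand side is $\psi_t:=P((1-t)\max(u,v)+t\,u,\,u)$; then $\min((1-t)\max(u,v)+tu,u)=u$ always (since $u\le\max(u,v)$), so $f_t\equiv u$ and once more everything collapses.

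The resolution — and this is the main obstacle, requiring care rather than new ideas — is that the left-hand side must be handled with the curve based at $\max(u,v)$ but with \emph{rooftop against $v$}: consider $\rho_t := P\big((1-t)v + t\max(u,v),\,\max(u,v)\big)$, running from $v$ (at $t=0$, since $P(v,\max(u,v))=v$) to $\max(u,v)$ (at $t=1$). Here $f_t=\min((1-t)v+t\max(u,v),\max(u,v))=(1-t)v+t\max(u,v)$ for $t\in[0,1]$, and by the proof of Proposition~\ref{prop: Darvas formula 1} (adapted — it is exactly the same computation, with $\theta_{\rho_t}^n$ supported on $\{\rho_t=f_t\}$ and $f_{t+s}-f_t=s(\max(u,v)-v)=s(v-\min(u,v))^+\cdot$... more precisely $\max(u,v)-v=(u-v)^+$, and $v-\min(u,v)=v-\min(u,v)=(v-u)^+ $, so these are \emph{different} nonnegative functions). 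So even the integrands differ. At this point the honest plan is: prove the left-hand side equals $\int_0^1\int_X(\max(u,v)-v)\,\theta_{\rho_t}^n\,dt$ via the Proposition~\ref{prop: Darvas formula 1} method, prove the right-hand side equals $\int_0^1\int_X(v-\min(u,v))\,\theta_{\varphi_t}^n\,dt$ via Corollary~\ref{cor: Darvas formula 2}, observe that on the \emph{pluripolar-free} level $\max(u,v)-v$ and $v-\min(u,v)$ have disjoint supports summing to $|u-v|$, and then — the crux — use a comparison/monotonicity argument: since $\rho_t\ge\varphi_t$ for the appropriate parametrization and envelopes push Monge-Amp\`ere mass onto contact sets where the relevant functions agree, one gets the inequality between the two time-integrals. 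I expect the main difficulty to lie precisely in matching up the parametrizations and in the measure comparison $\int_0^1\int_X(\cdots)\theta_{\rho_t}^n\,dt\ge\int_0^1\int_X(\cdots)\theta_{\varphi_t}^n\,dt$, which should follow by splitting $X$ into $\{u\le v\}$ and $\{u>v\}$ (modulo pluripolar sets), using \eqref{eq: env_measure_concentrate} to localize each $\theta^n$ onto the contact set, and comparing pointwise on each piece — together with the reduction to minimal singularities at the start. Everything else (the differentiation formula, the rooftop identities $P(v,\max(u,v))=v$, the Pythagorean reduction) is routine given the results already in the excerpt.
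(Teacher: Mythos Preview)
Your proposal is not a proof: after the (correct) reduction to minimal singularities, you cycle through several candidate curves, each time observing that the construction degenerates or gives the wrong integrand, and the final ``honest plan'' with $\rho_t$ is only a sketch whose crux (``comparison/monotonicity'') is never made precise. In fact that plan cannot work as stated: the integrands $\max(u,v)-v$ and $v-\min(u,v)$ are supported on the disjoint sets $\{u>v\}$ and $\{v>u\}$, so no pointwise comparison of measures will yield an inequality between the two integrals. Also, the ``Pythagorean'' identity $d_1(\max(u,v),u)+d_1(\max(u,v),v)=d_1(u,v)$ you invoke is not established anywhere (the Pythagorean formula of Lemma~\ref{lem: basic properties}(iii) is for $P(u,v)$, and the max-identity \eqref{eq: plurifine Pythagore max} is for $I_1$, not $d_1$).

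The idea you are missing is this: for the left-hand side one should \emph{not} run an envelope curve but the plain affine curve. Writing $\varphi=\max(u,v)$ and $w_t:=(1-t)u+tv$, the key algebraic identity is
\[
(1-t)u+t\varphi \;=\; \max(w_t,u),\qquad t\in[0,1],
\]
together with $\{w_t>u\}=\{v>u\}$ for $t\in(0,1)$. Then the standard derivative formula gives $\AM(\varphi)-\AM(u)=\int_0^1\int_X(\varphi-u)\,\theta_{\max(w_t,u)}^n\,dt$, and plurifine locality converts this into $\int_0^1\int_{\{v>u\}}(v-u)\,\theta_{w_t}^n\,dt$. On the other side, Corollary~\ref{cor: Darvas formula 2} with $\varphi_t=P(w_t,v)$ and the mass estimate \eqref{eq: env_measure_concentrate} (using $\{w_t\le v\}=\{u\le v\}$ and that $v-\min(u,v)$ vanishes on $\{w_t\ge v\}$) bound $\AM(v)-\AM(P(u,v))$ above by $\int_0^1\int_{\{u<v\}}(v-u)\,\theta_{w_t}^n\,dt$. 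Both sides are now integrals of the \emph{same} integrand against the \emph{same} measures $\theta_{w_t}^n$, and the inequality is immediate.
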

\begin{proof}
Set $\varphi=\max(u,v)$, $\psi= P(u,v)$. Observe that since $v\geq \psi$ and $\varphi \geq u$, it suffices to show that $\AM(v)-\AM(\psi)\leq \AM(\varphi)-\AM(u)$. 

Recall  that for any $\chi \in \textup{PSH}(X,\theta)$ the sequence of potentials with minimal singularity type $\chi_k:= \max(\chi,V_\theta -k)$ decreases to $\chi$. 
	Consequently, using approximation (Proposition \ref{prop: BEGZ convergence I and I1}), we can assume that both $u$ and $v$ (hence also $\varphi$ and $\psi$) have minimal singularity type. Using the formula for the derivative of  $t\mapsto I((1-t)u + t\varphi)$ \cite[eq (2.2)]{BBGZ13} (or Corollary 3.3 with the choice $v:=\varphi=\max(u,v)$ in which case $P(u,v)=\min(u,v)= u$) we can write
	\[
	\AM(\varphi)-\AM(u) = \int_0^1 \int_X (\varphi-u) \theta_{(1-t)u +t\varphi}^n \,dt. 
	\] 	
Set $w_t:=(1-t)u +tv$, for $t\in [0,1]$. 
Using the trivial identity $\varphi-u=\id_{\{v>u\}}(v-u)$ and Lemma \ref{lem: plurifine_prop} we can write
\begin{equation*}
\AM(\varphi)- \AM(u) = \int_0^1 \int_{\{v>u\}} (v-u) \theta_{w_t}^n\, dt.
\end{equation*}
On the other hand, it follows from \eqref{eq: env_measure_concentrate} that 
\[
\theta_{P(w_t,v)}^n \leq \id_{\{w_t\leq v\}} \theta_{w_t}^n +  \id_{\{w_t\geq v\}} \theta_{v}^n. 
\]
Using this, Corollary \ref{cor: Darvas formula 2} and the fact that $\{w_t<  v\} = \{u < v\}$, for $t\in (0,1)$, we get
\begin{equation*}
\AM(v) -\AM(\psi)= \int_0^1 \int_X  (v-\min(u,v)) \theta_{P(w_t, v)}^n \, dt\leq \int_0^1 \int_{\{u<v\}}  (v-u) \theta_{w_t}^n\, dt,
\end{equation*}
hence the conclusion. 
\end{proof}

\begin{coro}
\label{cor: Darvas formula 3}
If $u,v,\varphi\in \Ec^1(X,\theta)$ then $d_1(u,v)\geq d_1(P(u,\varphi),P(v,\varphi))$. 
\end{coro}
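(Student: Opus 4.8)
The plan is to reduce the statement to the Pythagorean formula (Lemma 3.3(iii)) plus the key inequality of Proposition 3.6, which says $d_1(\max(a,b),a) \geq d_1(b, P(a,b))$. First I would set $\psi := P(u,v)$ and note that by the Pythagorean formula $d_1(u,v) = d_1(u,\psi) + d_1(v,\psi)$, and likewise $d_1(P(u,\varphi),P(v,\varphi)) = d_1(P(u,\varphi), w) + d_1(P(v,\varphi), w)$ where $w := P(P(u,\varphi), P(v,\varphi))$. Observe that $w = P(u,v,\varphi) = P(\psi,\varphi)$, since the rooftop envelope of several functions can be computed iteratively (the largest $\theta$-psh function below $\min(u,v,\varphi)$ equals the largest one below $\min(P(u,v),\varphi)$). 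Also note $P(u,\varphi) \geq w$ and $\psi \geq w$, with $\max(P(u,\varphi), \psi) = ?$ — this is not immediately one of the named functions, so the reduction needs a little care.

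The cleaner route is as follows. It suffices, by the two Pythagorean splittings above, to prove the two ``halves''
\[
d_1(u,\psi) \geq d_1(P(u,\varphi), w) \quad\text{and}\quad d_1(v,\psi) \geq d_1(P(v,\varphi), w),
\]
and then add them. By symmetry I only treat the first. Since $u \geq \psi$ and $P(u,\varphi) \geq w = P(\psi,\varphi)$, and moreover $P(u,\varphi) = P(u, \varphi)$ while $w = P(\psi,\varphi)$ with $\psi \leq u$, the situation is exactly that of comparing the ``drop'' from $u$ to $P(u,\varphi)$ against the ``drop'' from $\psi$ to $P(\psi,\varphi)$, where $\psi \leq u$. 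This is precisely the content one extracts from Proposition 3.6: applying that proposition with the roles ``$u$'' $\mapsto \varphi$ and ``$v$'' $\mapsto u$ gives $d_1(\max(\varphi,u),\varphi) \geq d_1(u, P(\varphi,u))$, i.e. $\AM(\varphi)-\AM(P(u,\varphi)) \leq \AM(\max(u,\varphi))-\AM(\varphi)$ is \emph{not} quite what I want; rather I want a monotonicity statement in the lower function. So the actual key lemma to invoke/prove is: \emph{if $\psi \leq u$ then $\AM(u) - \AM(P(u,\varphi)) \geq \AM(\psi) - \AM(P(\psi,\varphi))$}, equivalently $d_1(u, P(u,\varphi)) \geq d_1(\psi, P(\psi,\varphi))$ for $\psi \leq u$. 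This follows by the same Corollary 3.5 integral representation: writing $\AM(u)-\AM(P(u,\varphi)) = \int_0^1 \int_X (u - \min(u,\varphi))\, \theta_{P((1-t)\varphi+tu,\,u)}^n\, dt$ (with $u,\varphi$ in the roles of the proposition), one compares term by term using that $u - \min(u,\varphi) = (u-\varphi)_+$ is larger than $(\psi - \varphi)_+$ pointwise when $\psi \leq u$, together with a comparison-principle estimate on the Monge-Ampère masses of the relevant envelopes, exactly as in the proof of Proposition 3.6.

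The main obstacle I expect is the bookkeeping identity $P(P(u,\varphi),P(v,\varphi)) = P(P(u,v),\varphi)$ and verifying that all these envelopes land in $\mathcal{E}^1$ (so that $d_1$ and $\AM$ are finite and the formulas apply) — this is routine given \cite[Theorem 2.10]{DDL16} and the fact that rooftop envelopes of $\mathcal{E}^1$ functions stay in $\mathcal{E}^1$, but it must be stated. The genuinely substantive step, the monotonicity $d_1(u,P(u,\varphi)) \geq d_1(\psi,P(\psi,\varphi))$ for $\psi \leq u$, is where the work of Proposition 3.6 (hence Corollary 3.5 and the contact-set concentration \eqref{eq: env_measure_concentrate}) gets reused; I would expect to reduce to smooth/minimal-singularity data by Proposition 3.2 first, then pass to the limit by Proposition 3.1 at the end.
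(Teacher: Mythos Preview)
Your reduction to two ``halves'' via the Pythagorean formula and the identity $P(P(u,\varphi),P(v,\varphi))=P(u,v,\varphi)=P(\psi,\varphi)$ is exactly how the paper proceeds, so the architecture is right. The gap is in your proof of the ordered case $d_1(u,\psi)\geq d_1(P(u,\varphi),P(\psi,\varphi))$ for $\psi\leq u$.

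You tried the substitution $(u,v)\mapsto(\varphi,u)$ in Proposition~3.6, saw it did not give what you need, and then retreated to a direct comparison of the two Corollary~3.5 integrals
\[
\int_0^1\!\!\int_X (u-\varphi)_+\,\theta_{P((1-t)\varphi+tu,\,u)}^n\,dt
\quad\text{versus}\quad
\int_0^1\!\!\int_X (\psi-\varphi)_+\,\theta_{P((1-t)\varphi+t\psi,\,\psi)}^n\,dt.
\]
The integrand comparison $(u-\varphi)_+\geq(\psi-\varphi)_+$ is fine, but the measures $\theta_{P((1-t)\varphi+tu,\,u)}^n$ and $\theta_{P((1-t)\varphi+t\psi,\,\psi)}^n$ are built from \emph{different} envelopes and there is no monotonicity or comparison-principle statement that orders them in the required direction. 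The proof of Proposition~3.6 does not do this either: it compares one Corollary~3.5 integral against an \emph{affine-line} derivative integral, and the plurifine trick works because the same set $\{v>u\}$ appears for all $t$. That mechanism does not transfer to your pair of integrals, so ``exactly as in the proof of Proposition~3.6'' is not a proof here.

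What you missed is that Proposition~3.6 \emph{does} give the ordered case directly, with a different substitution. Apply it with $(u,v)\mapsto(\psi,\,P(u,\varphi))$:
\[
d_1\big(\max(\psi,P(u,\varphi)),\,\psi\big)\;\geq\; d_1\big(P(u,\varphi),\,P(\psi,P(u,\varphi))\big)
= d_1\big(P(u,\varphi),\,P(\psi,\varphi)\big),
\]
the last identity because $P(\psi,P(u,\varphi))=P(\psi,u,\varphi)=P(\psi,\varphi)$ when $\psi\leq u$. Since $\psi\leq\max(\psi,P(u,\varphi))\leq u$, Lemma~3.3(ii) gives $d_1(u,\psi)\geq d_1(\max(\psi,P(u,\varphi)),\psi)$, and chaining the two inequalities yields the ordered case. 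This is precisely the paper's argument; once you have it, your Pythagorean assembly finishes the proof.
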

\begin{proof}
	We first assume that $v\leq u$. It follows that $v\leq \max(v,P(u,\varphi))\leq u$, hence by Lemma \ref{lem: basic properties}$(iii)$ and Proposition \ref{prop: Darvas formula 3} we have
\[
		d_1(v,u) \geq d_1(v, \max(v,P(u,\varphi))) \geq  d_1(P(u,\varphi),P(P(u,\varphi),v)) = d_1(P(u,\varphi),P(v,\varphi)). 
\]
Observe that the last identity follows from the fact and $P(P(u,\varphi),v)=P(u,\varphi,v)$ and $P(u,\varphi,v)= P(\varphi, v)$ since $v\leq u$.
Now, we remove the assumption $u\geq v$. Since $\min(u,v)\geq P(u,v)$ we can use the first step to write
\[
d_1(u,P(u,v)) \geq d_1(P(u,\varphi),P(u,v,\varphi)); \quad d_1(v,P(u,v))\geq d_1(P(v,\varphi),P(u,v,\varphi)). 
\]
To finish the proof, it suffices to use Lemma \ref{lem: basic properties}(iii) and to note that $P(P(u,\varphi),P(v,\varphi))=P(u,v,\varphi)$. 
\end{proof}

\medskip

\begin{theorem}
	%\label{thm: d is a metric}
	$d_1$ is a distance on $\Ec^1(X,\theta)$.
\end{theorem}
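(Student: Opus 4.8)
The plan is to verify the three distance axioms for $d_1$. Symmetry of $d_1$ is immediate from the definition \eqref{eq: d_1_eq}, since $P(u,v)=P(v,u)$. Non-degeneracy, i.e. $d_1(u,v)=0\iff u=v$, follows from the domination principle: by Lemma \ref{lem: basic properties}(iii) and Theorem \ref{thm: basic I energy}(iii) we have $d_1(u,v)\geq \AM(u)-\AM(P(u,v))\geq \frac{1}{n+1}\int_X(u-P(u,v))\theta_{P(u,v)}^n\geq 0$, with equality forcing $u=P(u,v)$ a.e. with respect to $\theta_{P(u,v)}^n$; Proposition \ref{prop: domination} then gives $u\leq P(u,v)\leq v$, and by symmetry $v\leq u$, so $u=v$. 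The real content is the triangle inequality
\[
d_1(u,w)\leq d_1(u,v)+d_1(v,w),\qquad u,v,w\in\Ec^1.
\]

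The strategy for the triangle inequality is first to establish it in the special case where one of the three points dominates another, using the Pythagorean formula and the contraction estimate of Corollary \ref{cor: Darvas formula 3}, and then to bootstrap to the general case. Concretely, I would first prove the inequality when $w\leq u$ and $w\leq v$: here $P(u,w)=w=P(v,w)$, so $d_1(u,w)=\AM(u)-\AM(w)$, and by Lemma \ref{lem: basic properties}(iii), $d_1(u,v)=d_1(u,P(u,v))+d_1(v,P(u,v))=\bigl(\AM(u)-\AM(P(u,v))\bigr)+\bigl(\AM(v)-\AM(P(u,v))\bigr)$; since $w\leq P(u,v)\leq u$ one has $\AM(u)-\AM(w)=\bigl(\AM(u)-\AM(P(u,v))\bigr)+\bigl(\AM(P(u,v))-\AM(w)\bigr)$, and comparing, the inequality reduces to $\AM(P(u,v))-\AM(w)\leq \AM(v)-\AM(P(u,v))$, i.e. $d_1(w,P(u,v))\leq d_1(P(u,v),v)$. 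This last inequality is exactly Corollary \ref{cor: Darvas formula 3} applied with the right choice of potentials, or can be read off from Proposition \ref{prop: Darvas formula 3} combined with $w\leq P(u,v)$. For the general case, I would apply the special case to the triple $P(u,v),P(v,w),P(u,v,w)$ (each of the first two dominating the third), together with the Pythagorean decompositions $d_1(u,v)=d_1(u,P(u,v))+d_1(v,P(u,v))$ and $d_1(v,w)=d_1(v,P(v,w))+d_1(w,P(v,w))$, and the contraction estimate $d_1(u,P(u,v))\geq d_1(P(u,w),P(u,v,w))$ of Corollary \ref{cor: Darvas formula 3}, chaining these to bound $d_1(u,w)$ via $d_1(u,P(u,w))+d_1(w,P(u,w))$.

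The main obstacle I anticipate is the bookkeeping in the general-case reduction: one must carefully route the triangle inequality through the common lower bound $P(u,v,w)$ and repeatedly invoke Corollary \ref{cor: Darvas formula 3} in the correct direction so that every term on the right-hand side can be absorbed into $d_1(u,v)+d_1(v,w)$ without slack leaking out. A clean way to organize this is to first prove the auxiliary inequality $d_1(u,w)\leq d_1(u,P(u,w)) + d_1(P(u,w),P(u,v,w)) + d_1(P(u,v,w), P(v,w)) + \cdots$ and then match each summand to a piece of $d_1(u,v)$ or $d_1(v,w)$ using the contraction property and the Pythagorean formula; the inequalities all point the right way precisely because $\AM$ is monotone and the envelopes nest. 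Once the triangle inequality is in hand, together with symmetry and non-degeneracy, we conclude that $(\Ec^1,d_1)$ is a metric space.
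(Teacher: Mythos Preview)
Your treatment of symmetry and non-degeneracy is essentially the paper's argument. The issue is with the triangle inequality.

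\medskip

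\textbf{A genuine error in the special case.} In the case $w\leq u$, $w\leq v$ you write that the triangle inequality ``reduces to $\AM(P(u,v))-\AM(w)\leq \AM(v)-\AM(P(u,v))$, i.e.\ $d_1(w,P(u,v))\leq d_1(P(u,v),v)$''. This is not the correct reduction: you have dropped the term $d_1(v,w)=\AM(v)-\AM(w)$ from the right-hand side. With that term included, the inequality to be shown is simply $0\leq 2(\AM(v)-\AM(P(u,v)))$, which is immediate from monotonicity of $\AM$ and needs neither Proposition~\ref{prop: Darvas formula 3} nor Corollary~\ref{cor: Darvas formula 3}. Worse, the inequality you actually wrote down, $d_1(w,P(u,v))\leq d_1(P(u,v),v)$, is false in general: take $u=v$, so $P(u,v)=v$ and the right side vanishes, while $d_1(w,v)$ can be arbitrarily large. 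It certainly does not follow from Corollary~\ref{cor: Darvas formula 3}.

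\medskip

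\textbf{The bootstrapping is unnecessary.} Even once the special case is repaired, your plan for the general case is vague, and you yourself flag the bookkeeping as the obstacle. The paper avoids all of this by a direct one-line application of Corollary~\ref{cor: Darvas formula 3}. Unfolding the definition, the triangle inequality $d_1(u,v)\leq d_1(u,\varphi)+d_1(\varphi,v)$ is equivalent to
\[
\AM(P(\varphi,u))-\AM(P(u,v))\ \leq\ \AM(\varphi)-\AM(P(\varphi,v)).
\]
Now apply Corollary~\ref{cor: Darvas formula 3} to the ordered pair $\varphi\geq P(\varphi,v)$ with auxiliary potential $u$:
\[
\AM(\varphi)-\AM(P(\varphi,v))=d_1(\varphi,P(\varphi,v))\ \geq\ d_1(P(\varphi,u),P(\varphi,v,u))=\AM(P(\varphi,u))-\AM(P(\varphi,v,u)),
\]
and conclude using $P(\varphi,v,u)\leq P(u,v)$ together with monotonicity of $\AM$. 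No special case, no routing through $P(u,v,w)$, no bookkeeping. Your instinct to use the contraction property of Corollary~\ref{cor: Darvas formula 3} is correct; the point is that a \emph{single} application, after the algebraic reduction above, already closes the argument.
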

\begin{proof}
The quantity $d_1$ is non-negative, symmetric and finite by definition. 
Next we show that $d_1$ is non degenerate. Suppose $d_1(u,v)=0$. Lemma \ref{lem: basic properties}(iii) implies that $d_1(u,P(u,v))=d_1(v,P(u,v))=0$. Moreover, Theorem \ref{thm: basic I energy}(iii) gives that $P(u,v) \geq u$ a.e. with respect to $\theta_{P(u,v)}^n$. By the domination principle (Proposition \ref{prop: domination}) we obtain that $P(u,v) \geq u$, hence trivially $u=P(u,v)$. By symmetry $v=P(u,v)$, implying that $u=v$. 

It remains to check that $d_1$ satisfies the triangle inequality: for $u,v,\varphi\in \Ec^1(X,\theta)$ we want to prove that
	\[
	d_1(u,v)\leq d_1(u,\varphi) +d_1(v,\varphi). 
	\] 
	Using the definition of $d_1$  (see \eqref{eq: d_1_eq}) this amounts to showing that 
	\[
	 \AM(P(\varphi,u)) -\AM(P(u,v))\leq \AM(\varphi) -\AM(P(\varphi,v)). 
	\]
	But this follows from Corollary \ref{cor: Darvas formula 3}, as we have the following sequence of inequalities: 
    \begin{eqnarray*}
    \AM(\varphi) - \AM(P(\varphi,v)) &=& d_1(\varphi, P(\varphi, v)) \\
    &\geq & d_1( P(\varphi, u), P(P(\varphi, v), u))=\AM(P(\varphi, u)) - \AM( P(\varphi, v, u))\\
    &\geq &  \AM(P(\varphi,u))-\AM(P(u,v)),
    \end{eqnarray*}
	where in the last line we have used the montonicity of $\AM$ (Theorem \ref{thm: basic I energy}). 
\end{proof}

\subsection{Completeness of $(\mathcal{E}^1,d_1)$}

\noindent We first establish the following key comparison between $I_1$ and $d_1$, extending \cite[Theorem 3]{Dar15} from the K\"ahler case. This result allows to interpret $d_1$-convergence using analytic means. 
\begin{theorem}\label{thm: comparison d1 and I1}
Given $u,v \in \mathcal E^1$ the following estimates hold:
	\[
	\frac{1}{3 \cdot 2^{n+2}(n+1)}I_1(u,v) \leq d_1(u,v) \leq I_1(u,v).\]
\end{theorem}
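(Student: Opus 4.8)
The plan is to prove the two inequalities separately, exploiting the Pythagorean formula (Lemma \ref{lem: basic properties}(iii)) and the $I_1$-analog \eqref{eq: plurifine Pythagore max} to reduce everything to the comparable case $u \leq v$, together with the already-established Proposition \ref{prop: Darvas formula 3} and Corollary \ref{cor: Darvas formula 3}.

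\textbf{Upper bound $d_1(u,v) \le I_1(u,v)$.} First reduce to the comparable case: by Lemma \ref{lem: basic properties}(iii), $d_1(u,v) = d_1(u,P(u,v)) + d_1(v,P(u,v))$, and by \eqref{eq: plurifine Pythagore max} applied with roles of $\max$ replaced by $\min$ — more precisely, since $P(u,v)\le \min(u,v)$ and the Monge--Amp\`ere measures of $\theta_u^n+\theta_v^n$ split over $\{u\le v\}$ and $\{u> v\}$ — one gets an analogous decomposition of $I_1(u,v)$ dominating $I_1(u,P(u,v)) + I_1(v,P(u,v))$ up to controllable terms; actually it is cleaner to directly prove that if $w\le z$ in $\mathcal E^1$ then $d_1(w,z)\le I_1(w,z)$, and then apply this to the pairs $(P(u,v),u)$ and $(P(u,v),v)$, noting $I_1(P(u,v),u)+I_1(P(u,v),v) \le I_1(u,v)$ would need care. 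So the core claim is: for $w\le z$,
\[
\AM(z)-\AM(w) \le \int_X |z-w|(\theta_z^n+\theta_w^n).
\]
This is immediate from Theorem \ref{thm: basic I energy}(iii), which gives $\AM(z)-\AM(w)\le \int_X (z-w)\theta_w^n \le I_1(w,z)$. Then for general $u,v$, apply this to $(P(u,v),u)$ and $(P(u,v),v)$ and use that, by the plurifine locality of the Monge--Amp\`ere operator and \eqref{eq: env_measure_concentrate}, $\int_X|P(u,v)-u|\,\theta_{P(u,v)}^n + \int_X|P(u,v)-v|\,\theta_{P(u,v)}^n$ together with the $\theta_u^n,\theta_v^n$ contributions assemble into at most $I_1(u,v)$; I would carry this out by splitting $X$ into $\{u\le v\}$ and $\{u>v\}$ and using $\theta_{P(u,v)}^n \le \mathbbm 1_{\{P(u,v)=u\}}\theta_u^n + \mathbbm 1_{\{P(u,v)=v\}}\theta_v^n$.

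\textbf{Lower bound.} Here the key input is Proposition \ref{prop: Darvas formula 3}: $d_1(\max(u,v),u) \ge d_1(v,P(u,v))$, and symmetrically $d_1(\max(u,v),v)\ge d_1(u,P(u,v))$. Adding, and using the Pythagorean formula twice (once for the pair $(u,v)$ with rooftop $P(u,v)$, once for the pair $(u,v)$ with "rooftop replaced by" $\max(u,v)$, i.e. Lemma \ref{lem: basic properties}(ii) applied to $u\le\max(u,v)\ge v$), one relates $d_1(u,v)$ to $d_1(u,\max(u,v)) + d_1(v,\max(u,v))$. Concretely: $d_1(u,\max(u,v))=\AM(\max(u,v))-\AM(u)$ and $d_1(v,\max(u,v))=\AM(\max(u,v))-\AM(v)$, and each of these should be estimated below by a fixed fraction of $\int_X |u-v|(\theta_u^n+\theta_{\max(u,v)}^n)$ etc. The bridge from $d_1$ to $I_1$ in the comparable case goes through Theorem \ref{thm: basic I energy}(iii) again, which gives $\AM(z)-\AM(w)\ge \frac{1}{n+1}\int_X(z-w)\theta_w^n$ for $w\le z$ — this produces the $n+1$ factor. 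To convert $\int_X(z-w)\theta_w^n$ into the full $I_1(w,z)=\int_X(z-w)(\theta_w^n+\theta_z^n)$, one needs a reverse inequality of the form $\int_X(z-w)\theta_z^n \le 2^{n+1}\int_X(z-w)\theta_w^n$ or similar, which is exactly the content of Lemma \ref{lem: bound energy 1} (bounding the super-level sets of $\theta_z^n$ by those of $\theta_w^n$ with a factor $2^n$), integrated against $z-w$ via the layer-cake formula as in the proof of Proposition \ref{prop: BEGZ convergence I and I1}. Combining all the constants ($n+1$ from Theorem \ref{thm: basic I energy}(iii), $2^n$ from Lemma \ref{lem: bound energy 1}, a factor coming from the $\max$-to-$P$ reduction using Proposition \ref{prop: Darvas formula 3} and the plurifine splitting \eqref{eq: plurifine Pythagore max}, and a factor $3$ absorbing the cross terms) yields the stated $\frac{1}{3\cdot 2^{n+2}(n+1)}$.

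\textbf{Main obstacle.} The upper bound is essentially a bookkeeping exercise with Theorem \ref{thm: basic I energy}(iii) and \eqref{eq: env_measure_concentrate}. The genuine difficulty is the lower bound: one must pass from the rooftop envelope $P(u,v)$, which is the natural object in the definition of $d_1$, to the pointwise maximum $\max(u,v)$, which is the natural object for $I_1$, and this passage is precisely where Proposition \ref{prop: Darvas formula 3} is used in a non-trivial way. Getting the explicit constant right — in particular tracking how the $2^n$ from Lemma \ref{lem: bound energy 1} interacts with the $2$ lost in the $P$-versus-$\max$ comparison and with the $n+1$ — is the part that requires the most care; I would set up the comparable-case estimate $\frac{1}{2^{n+1}(n+1)}I_1(w,z)\le d_1(w,z)\le I_1(w,z)$ for $w\le z$ as a standalone lemma first, then feed it into the Pythagorean/Proposition \ref{prop: Darvas formula 3} machinery for the general case.
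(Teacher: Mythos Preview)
Your upper bound is fine and matches the paper's argument: by the Pythagorean formula and Theorem \ref{thm: basic I energy}(iii), $d_1(u,P(u,v))\le \int_X(u-P(u,v))\,\theta_{P(u,v)}^n$, and then \eqref{eq: env_measure_concentrate} gives $\le\int_{\{P(u,v)=v\}}(u-v)\,\theta_v^n\le\int_X|u-v|\,\theta_v^n$; adding the symmetric estimate yields $d_1(u,v)\le I_1(u,v)$.

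The lower bound, however, has a real gap. Your chain is: (a) $I_1(u,v)=I_1(\max(u,v),u)+I_1(\max(u,v),v)$ by \eqref{eq: plurifine Pythagore max}; (b) in the comparable case $I_1\le C\,d_1$; (c) bound $d_1(\max(u,v),u)+d_1(\max(u,v),v)$ by a multiple of $d_1(u,v)$ using Proposition \ref{prop: Darvas formula 3}. Step (b) is actually cheap --- for $w\le z$, Theorem \ref{thm: basic I energy}(ii)--(iii) already give $\int_X(z-w)\theta_z^n\le d_1(w,z)$ and $\int_X(z-w)\theta_w^n\le (n+1)d_1(w,z)$, so $I_1(w,z)\le(n+2)d_1(w,z)$; Lemma \ref{lem: bound energy 1} is not needed. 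The problem is step (c): Proposition \ref{prop: Darvas formula 3} says $d_1(\max(u,v),u)\ge d_1(v,P(u,v))$, and adding the symmetric inequality gives
\[
d_1(\max(u,v),u)+d_1(\max(u,v),v)\ \ge\ d_1(u,P(u,v))+d_1(v,P(u,v))=d_1(u,v),
\]
which is the \emph{wrong direction}. You need $\le$, and the paper contains no tool giving $d_1(\max(u,v),u)\lesssim d_1(u,v)$ at this stage (indeed, proving it via $d_1\le I_1$ and \eqref{eq: plurifine Pythagore max} and then $I_1\lesssim d_1$ would be circular).

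The paper circumvents this by a different idea: it introduces the midpoint $(u+v)/2$ and the auxiliary Lemma \ref{lem: halfwayest}, which gives $d_1\big(u,\tfrac{u+v}{2}\big)\le \tfrac{3(n+1)}{2}d_1(u,v)$. One then applies the Pythagorean formula to the pair $\big(u,\tfrac{u+v}{2}\big)$ and, crucially, uses the pointwise inequality $\theta_u^n\le 2^n\theta_{(u+v)/2}^n$ to convert the $\theta_{(u+v)/2}^n$-integral into a $\theta_u^n$-integral. Adding the two resulting estimates for $u-P\big(u,\tfrac{u+v}{2}\big)$ and $\tfrac{u+v}{2}-P\big(u,\tfrac{u+v}{2}\big)$ against $\theta_u^n$ produces $\tfrac12\int_X|u-v|\,\theta_u^n$, and symmetry finishes the job. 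The $\max(u,v)$ route you propose does not seem to close without an additional inequality not available in the paper.
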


\begin{proof}
It follows from  Lemma \ref{lem: basic properties}  that $d_1(u, v) = d_1(u, P(u,v))+ d_1(v, P(u,v))$. Since the Monge-Amp\`ere energy is concave along affine curve (Theorem \ref{thm: basic I energy}(iii)),
\begin{eqnarray*}
d_1(u, P(u,v))&=& \AM(u)- \AM(P(u,v)) \leq  \int_X (u-P(u,v)) \theta_{P(u,v)}^n\\
&\leq &  \int_{\{v=P(u,v)\}} (u-v) \theta_v^n \leq \int_X |u-v| \theta_v^n .
\end{eqnarray*}
Similarly we get $d_1(v, P(u,v))\leq \int_X |u-v| \theta_u^n $. Putting these two inequalities together we get $d_1(u,v)\leq I_1(u,v)$.

Next we establish the lower bound for $d_1$. 
By the next  lemma and the Pythagorean formula we can start writing
\begin{flalign*}
\frac{3(n+1)}{2}d_1(u,v) &\geq d_1\Big(u,\frac{u + v}{2}\Big) \geq d_1\Big(u,P\Big(u,\frac{u + v}{2}\Big)\Big)\\
&\geq \int_X \Big(u - P\Big(u,\frac{u + v}{2}\Big)\Big) \theta_{u}^n.
\end{flalign*}
By a similar reasoning as above, and the fact that $2^n \theta^n_{(u + v)/2} \geq \theta^n_{u}$ we can write:
\begin{flalign*}
\frac{3(n+1)}{2}d_1(u,v) &\geq d_1\Big(u,\frac{u + v}{2}\Big) \geq d_1\Big(\frac{u+v}{2},P\Big(u,\frac{u + v}{2}\Big)\Big)\\
&\geq \int_X \Big(\frac{u+v}{2} - P\Big(u,\frac{u + v}{2}\Big)\Big) \theta_{(u + v)/2}^n\\
&\geq \frac{1}{2^n} \int_X \Big(\frac{u+v}{2} - P\Big(u,\frac{u + v}{2}\Big)\Big) \theta^n_{u}.
\end{flalign*}
Adding the last two estimates we obtain
\begin{flalign*}
 3 \cdot 2^{n}(n+1) d_1(u,v) &\geq  \int_X \Big(\Big(u - P\Big(u,\frac{u + v}{2}\Big)\Big)+\Big(\frac{u+v}{2} - P\Big(u,\frac{u + v}{2}\Big)\Big) \Big) \theta^n_{u}\\
&\geq \frac{1}{2}\int_X |u - v|  \theta^n_{u}.
\end{flalign*}
By symmetry we also have $3 \cdot 2^{n+1} (n+1) d_1(u,v) \geq \int _X |u - v|\theta_{v}^n$, and adding these last two estimates together the lower bound for $d_1$ is established.
\end{proof}

According to this last result, $I_1$ satisfies the quasi-triangle inequality. For a proof of this fact using only the pluripotential comparison principle we refer to \cite{GLZ17}.

\begin{lemma} \label{lem: halfwayest} Suppose $u,v \in \mathcal E^1$. Then the following holds:
$$d_1\Big(u,\frac{u+v}{2}\Big) \leq \frac{3(n+1)}{2}d_1(u,v).$$
\end{lemma}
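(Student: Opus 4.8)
The goal is to bound $d_1(u, (u+v)/2)$ from above by $\frac{3(n+1)}{2} d_1(u,v)$. My plan is to use the Pythagorean formula and the linearity/concavity structure of $\AM$, reducing everything to estimates against the measures $\theta_w^n$ for $w$ ranging over affine combinations of $u$ and $v$, exactly in the style of Corollary \ref{cor: Darvas formula 2} and Proposition \ref{prop: Darvas formula 3}. First I would reduce to the case where $u,v$ have minimal singularities by the approximation result (Proposition \ref{prop: BEGZ convergence I and I1}): pick decreasing sequences $u_j \searrow u$, $v_j \searrow v$ with minimal singularities; then $(u_j+v_j)/2 \searrow (u+v)/2$, and $d_1$ is continuous along such sequences since $\AM$ and $P(\cdot,\cdot)$ behave well. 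So assume $u,v$ have minimal singularities and, after subtracting a constant, that $u, v \le 0$.

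Next, set $m = (u+v)/2$ and use the Pythagorean formula (Lemma \ref{lem: basic properties}(iii)): $d_1(u,m) = d_1(u, P(u,m)) + d_1(m, P(u,m))$. Note $P(u,m) = P(u, (u+v)/2)$; on the set $\{v \ge u\}$ we have $m \ge u$, and on $\{v < u\}$ we have $m < u$. The first term is $\AM(u) - \AM(P(u,m)) \le \int_X (u - P(u,m))\theta_{P(u,m)}^n$ by concavity (Theorem \ref{thm: basic I energy}(ii)), and since $\theta_{P(u,m)}^n$ is concentrated on $\{P(u,m) = u\} \cup \{P(u,m) = m\}$ by \eqref{eq: env_measure_concentrate}, this is $\le \int_{\{P(u,m)=m\}} (u - m)\theta_m^n = \frac12 \int_{\{P(u,m)=m\}} (u-v)\theta_m^n$, which is nonzero only where $m \le u$, i.e. $v \le u$; there $u - v \le 0$... so actually this term needs care. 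The cleaner route: apply Corollary \ref{cor: Darvas formula 2} directly. With $u,v$ playing the roles in Prop \ref{prop: Darvas formula 1} but with $v$ replaced by $m$: $\AM(m) - \AM(P(u,m)) = \int_0^1 \int_X (m - \min(u,m))\theta_{\varphi_t}^n\, dt$ where $\varphi_t = P((1-t)u + tm, m)$, and $m - \min(u,m) = (m-u)^+ = \frac12(v-u)^+$. Similarly bound $d_1(u, P(u,m)) = \AM(u) - \AM(P(u,m))$: here I would use the symmetric version of Corollary \ref{cor: Darvas formula 2} (swapping roles, writing $\psi_t = P((1-t)m + tu, u)$), getting $\AM(u) - \AM(P(u,m)) = \int_0^1\int_X (u - \min(u,m))\theta_{\psi_t}^n\,dt$ with $u - \min(u,m) = (u-m)^+ = \frac12(u-v)^+$.

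Then I would compare the measures $\theta_{\varphi_t}^n$ and $\theta_{\psi_t}^n$ to $\theta_{w_s}^n$ for $w_s = (1-s)u + sv$, using \eqref{eq: env_measure_concentrate} to dominate the envelope measures (e.g. $\theta_{\varphi_t}^n \le \ind_{\{w \le m\}}\theta_w^n + \ind_{\{w \ge m\}}\theta_m^n$ where $w = (1-t)u+tm$ is itself an affine combination of $u,v$), and the elementary inequality $\theta_m^n \le 2^n \theta$-type bounds are not needed here — rather $2^n \theta_m^n \ge \theta_u^n$ type bounds combined with $\int_X |u-v|\theta_u^n + \int_X|u-v|\theta_v^n \le 3\cdot 2^{n+2}(n+1) d_1(u,v)$... but this brings in a factor too large. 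The sharper bookkeeping: on $\{v > u\}$, $(m - \min(u,m)) = \frac12(v-u)$, and the envelope structure forces the relevant $\theta_{\varphi_t}^n$ mass to sit where $\varphi_t$ touches one of the two affine obstacles, both of which are affine in $u,v$; integrating $\frac12(v-u)^+$ against $\theta$ of an affine combination $w_s$ and then in $s$ recovers $\frac12[\AM(v) - \AM(P(u,v))] \le \frac12 d_1(u,v)$ via Corollary \ref{cor: Darvas formula 2} again, while the cross terms contribute the extra factor. Carefully collecting, the four pieces ($d_1(u,P(u,m))$ and $d_1(m,P(u,m))$, each split by $\{u>v\}$ vs $\{u<v\}$) sum to something bounded by a constant times $d_1(u,v)$; optimizing the constants gives $\frac{3(n+1)}{2}$.

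\textbf{Main obstacle.} The hard part is the precise constant-tracking: getting exactly $\frac{3(n+1)}{2}$ rather than a larger admissible constant requires carefully matching each envelope-measure domination to the identity in Corollary \ref{cor: Darvas formula 2}, rather than crudely invoking Theorem \ref{thm: comparison d1 and I1} (which would give a worse constant and moreover is proved *using* this lemma, so would be circular). The delicate point is that $(u+v)/2$ is itself one endpoint of the affine family used in Corollary \ref{cor: Darvas formula 2}, so one must handle nested envelopes $P((1-t)u + t\,\frac{u+v}{2}, \frac{u+v}{2})$ and verify the plurifine locality identities ($\ind_{\{v>u\}}\theta_{\max(w_t,u)}^n = \ind_{\{v>u\}}\theta_{w_t}^n$ and analogues) carry over, then reparametrize the double integral so the factor $(n+1)$ emerges from the $(n+1)$-fold sum defining $\AM$ and the factor $3/2$ from combining the two Pythagorean pieces with the $2^n$ vs the affine-interpolation bounds — I expect a clean change of variables in $t$ makes this transparent once the envelope measures are correctly dominated.
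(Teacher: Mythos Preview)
Your plan is substantially more complicated than what is needed, and in its current form it does not actually establish the constant $\tfrac{3(n+1)}{2}$: you only assert that ``optimizing the constants gives $\tfrac{3(n+1)}{2}$'' and that ``a clean change of variables in $t$'' should make this transparent, without carrying out either step. The nested envelopes $P((1-t)u+tm,m)$ and the domination of their Monge--Amp\`ere measures via \eqref{eq: env_measure_concentrate} would leave you with integrals of $\tfrac12(v-u)^+$ against $\theta_{w_s}^n$ for various affine combinations $w_s$, and it is not clear how to close this up to $d_1(u,v)$ with the stated constant without passing through an $I_1$--type bound (which, as you correctly note, would be circular here).

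The paper's argument sidesteps all of this with a single elementary observation you did not use: since $P(u,v)\le u$ and $P(u,v)\le v$, one has $P(u,v)\le \tfrac{u+v}{2}$ and hence $P(u,v)\le P\big(u,\tfrac{u+v}{2}\big)$. After the Pythagorean decomposition this lets one replace $P(u,m)$ by $P(u,v)$ throughout, using only Lemma~\ref{lem: basic properties}(ii). One is then left with $d_1(u,P(u,v))+d_1\big(\tfrac{u+v}{2},P(u,v)\big)$, and both terms are bounded above by integrals against the \emph{single} measure $\theta_{P(u,v)}^n$ via Theorem~\ref{thm: basic I energy}(iii). Expanding $\tfrac{u+v}{2}-P(u,v)=\tfrac12(u-P(u,v))+\tfrac12(v-P(u,v))$ and applying the lower bound in Theorem~\ref{thm: basic I energy}(iii) (the factor $\tfrac{1}{n+1}$) in the reverse direction immediately gives $\tfrac{3(n+1)}{2}d_1(u,P(u,v))+\tfrac{n+1}{2}d_1(v,P(u,v))\le \tfrac{3(n+1)}{2}d_1(u,v)$. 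No approximation, no Corollary~\ref{cor: Darvas formula 2}, and no plurifine locality are needed.
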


\begin{proof} Using Lemma \ref{lem: basic properties} and Theorem \ref{thm: basic I energy} multiple times we deduce the following estimates:
\begin{flalign*}
d_1\Big(u,\frac{u + v}{2}\Big) &= d_1\Big(u, P\Big(u,\frac{u + v}{2}\Big)\Big) + d_1\Big(\frac{u + v}{2},P\Big(u,\frac{u + v}{2}\Big)\Big)\\
&\leq d_1(u, P(u,v)) + d_1\Big(\frac{u+v}{2},P(u,v)\Big)\\
&\leq  \int_X (u - P(u,v))\theta^n_{P(u,v)} + \int_X\Big(\frac{u+v}{2} - P(u,v)\Big)\theta^n_{P(u,v)}\\
&\leq \frac{3}{2}\int_X (u - P(u,v)) \theta^n_{P(u,v)} + \frac{1}{2}\int_X(v - P(u,v))\theta^n_{P(u,v)}\\
& \leq \frac{3(n+1)}{2}d_1(u, P(u,v)) + \frac{n+1}{2}d_1(v,P(u,v))\\
&\leq \frac{3(n+1)}{2}d_1(u,v),
\end{flalign*}
where in the second line we have additionally used that $P(u,v) \leq P(u,(u + v)/2)$.
\end{proof}

\begin{lemma}\label{lem: control sup by d1}
There exists $A, B\geq 1$ such that for any $\varphi\in \mathcal{E}^1(X, \theta)$
$$-d_1(V_{\theta}, \varphi)\leq \sup_X \varphi \leq A d_1(V_{\theta}, \varphi)+B.$$
\end{lemma}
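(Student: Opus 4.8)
The lower bound is the easy half. Since $V_\theta$ has minimal singularities and $\varphi \le \sup_X\varphi + V_\theta$ is false in general, one instead argues via $P(V_\theta,\varphi)$. Concretely, $\max(\varphi, V_\theta) \le \sup_X\varphi + V_\theta$ is still not quite right; the clean route is: by Lemma \ref{lem: basic properties}(iii), $d_1(V_\theta,\varphi) \ge d_1(V_\theta, P(V_\theta,\varphi)) = \AM(V_\theta) - \AM(P(V_\theta,\varphi)) = -\AM(P(V_\theta,\varphi))$ (using $\AM(V_\theta)=0$). Then $P(V_\theta,\varphi) \ge \min(V_\theta,\varphi) \ge V_\theta - (\sup_X\varphi)^-$ when $\sup_X\varphi < 0$, and monotonicity plus the translation property $\AM(u+c) = \AM(u)+c$ give $\AM(P(V_\theta,\varphi)) \ge -(\sup_X\varphi)^-$, hence $d_1(V_\theta,\varphi) \ge (\sup_X\varphi)^- \ge -\sup_X\varphi$. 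When $\sup_X\varphi \ge 0$ the bound $-d_1 \le 0 \le \sup_X\varphi$ is trivial. So the first inequality follows just from monotonicity and translation-invariance of $\AM$.

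For the upper bound, the plan is to reduce to the finite-energy estimate Theorem \ref{thm: comparison d1 and I1} together with a known bound controlling $\sup_X\varphi$ by $I_1(V_\theta,\varphi)$ (or equivalently by $\int_X(V_\theta-\varphi)\,\theta_{V_\theta}^n$-type quantities). First normalize: set $M = \sup_X\varphi$; the interesting case is $M \ge 0$, and replacing $\varphi$ by $\varphi - M$ we may try to compare $d_1(V_\theta,\varphi)$ with $d_1(V_\theta,\varphi-M) = d_1(V_\theta,\varphi) $ shifted — but $d_1$ is not translation invariant in this way, so instead I would work directly. The key classical input is that for $\varphi \in \mathcal E^1$ with $\sup_X\varphi = M$, one has a bound of the shape $M \le C_1 \int_X |V_\theta - \varphi|\,\theta_{V_\theta}^n + C_2$, coming from the fact that $\theta_{V_\theta}^n$ is a fixed probability measure not charging pluripolar sets and $\varphi$ is $\theta$-psh (so $\varphi$ cannot be large on average against $\theta_{V_\theta}^n$ without being large pointwise, by the Hartogs-type/compactness estimates of Guedj–Zeriahi, cf.\ \cite{GZ05,BEGZ10}). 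Since $\int_X |V_\theta-\varphi|\,\theta_{V_\theta}^n \le I_1(V_\theta,\varphi) \le 3\cdot 2^{n+2}(n+1)\, d_1(V_\theta,\varphi)$ by Theorem \ref{thm: comparison d1 and I1}, this yields $\sup_X\varphi \le A\, d_1(V_\theta,\varphi) + B$ with $A = 3\cdot 2^{n+2}(n+1) C_1$ and $B = C_2$.

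The main obstacle is making precise the inequality $\sup_X\varphi \le C_1\int_X|V_\theta-\varphi|\,\theta_{V_\theta}^n + C_2$, i.e.\ controlling the pointwise supremum by an integral against a fixed non-pluripolar measure. In the Kähler case this is standard ($\omega^n$ is a smooth volume form and one uses the sub-mean-value property plus compactness of $\{\varphi \in \PSH(X,\omega) : \sup_X\varphi = 0\}$ in $L^1$); in the big case one must instead use that $\theta_{V_\theta}^n$ dominates a nonzero measure with an $L^p$ density on the ample locus (or invoke $\int_X \varphi\,\theta_{V_\theta}^n \ge -C(1+\dots)$ type estimates from \cite{BEGZ10}). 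I would either cite the relevant statement from \cite{BEGZ10} directly or, if a self-contained argument is wanted, observe: $\sup_X\varphi = \sup_X(\varphi - V_\theta)$ (up to the bounded function $V_\theta$), the function $\varphi - V_\theta$ is $\theta_{V_\theta}$-psh-like and bounded above, and apply the uniform integrability / $\sup$–$\int$ comparison for quasi-psh functions against a fixed measure that is well-behaved on $\Amp(\{\theta\})$. Everything else in the proof is bookkeeping with Lemma \ref{lem: basic properties}, Theorem \ref{thm: basic I energy}, and Theorem \ref{thm: comparison d1 and I1}.
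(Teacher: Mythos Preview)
Your plan for the upper bound is correct and matches the paper's proof: use $d_1 \gtrsim I_1 \geq \int_X |\varphi - V_\theta|\,\theta_{V_\theta}^n$ via Theorem \ref{thm: comparison d1 and I1}, then control $\sup_X\varphi$ by this integral. The paper resolves your ``main obstacle'' exactly as you suggest, by citing Berman's regularity result $\theta_{V_\theta}^n \leq C\,dV$ and the $L^1$-compactness of sup-normalized $b\omega$-psh functions (with $\theta \leq b\omega$) from \cite{GZ05}, which yields a uniform bound $\int_X |\varphi - \sup_X\varphi - V_\theta|\,\theta_{V_\theta}^n \leq C'$; the triangle inequality then gives $d_1 \geq D\sup_X\varphi - DC'$.

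Your lower bound argument, however, has the inequalities reversed and the implication chain breaks. You write $P(V_\theta,\varphi) \geq \min(V_\theta,\varphi) \geq V_\theta - (\sup_X\varphi)^-$, but both are backwards: $P(V_\theta,\varphi) \leq \min(V_\theta,\varphi)$ by definition of the envelope, and when $\sup_X\varphi < 0$ one has $\varphi \leq V_\theta + \sup_X\varphi$ (because $\varphi - \sup_X\varphi$ is a competitor for $V_\theta$), not $\geq$. Even granting your claimed $\AM(P(V_\theta,\varphi)) \geq -(\sup_X\varphi)^-$, the conclusion $d_1 \geq (\sup_X\varphi)^-$ would not follow, since from $d_1 \geq -\AM(P)$ and $-\AM(P) \leq (\sup_X\varphi)^-$ one cannot deduce $d_1 \geq (\sup_X\varphi)^-$. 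The correct (and simpler) argument, which is the paper's, is: when $\sup_X\varphi \leq 0$ one has $\varphi \leq V_\theta$, hence $P(V_\theta,\varphi) = \varphi$ and $-d_1(V_\theta,\varphi) = \AM(\varphi) \leq \AM(V_\theta + \sup_X\varphi) = \sup_X\varphi$ by monotonicity and the translation property. (Your aside that $V_\theta$ is a ``bounded function'' is also incorrect in the big setting; it only has minimal singularities.)
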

\begin{proof}
If $\sup_X \varphi \leq 0$, then the right-hand side inequality is trivial, while
$$
-d_1(V_{\theta},\varphi)=\AM(\varphi) \leq \sup_X (\varphi-V_\theta)=\sup_X \varphi.
$$
We therefore assume that $\sup_X \varphi \geq 0$. In this case the left-hand inequality is trivial. It follows from Lemma \ref{lem: Berman Vtheta} that  $\theta_{V_{\theta}}^n\leq C dV$, for a uniform constant $C>0$.  Let $b>0$ be a constant so that $\theta \leq b\omega$. Then all $\theta$-psh functions are $b\omega$-psh. By compactness property of the set of normalized $b\omega$-psh functions (see \cite[Proposition 2.7]{GZ05}) we have
\[
\int_X |\varphi-\sup_X \varphi -V_{\theta}| \theta_{V_{\theta}}^n \leq C', 
\]
where $C'>0$ is a uniform constant. Using Theorem \ref{thm: comparison d1 and I1} the result then follows in the following manner: 
\begin{flalign*}
d_1(V_{\theta},\varphi) & \geq  D I_1(V_{\theta},\varphi)  \geq  D\int_X |\varphi-V_{\theta}| \theta_{V_{\theta}}^n\\
&\geq  D \sup_X \varphi - D \int_X |\varphi-\sup_X \varphi -V_{\theta}| \theta_{V_{\theta}}^n \geq D \sup_X \varphi -DC'.  	
\end{flalign*}

\end{proof}

With the comparison between $d_1$ and $I_1$ (Theorem \ref{thm: comparison d1 and I1}) and Lemma \ref{lem: control sup by d1} in our hands, we follow the ideas  from the proof of \cite[Theorem 9.2]{Dar14} and the convergence results in \cite{BEGZ10} to prove the next completeness theorem. 
\begin{theorem}%\label{thm: E1 complete }
The space $\left(\mathcal{E}^1(X, \theta), d_1\right)$ is complete.
\end{theorem}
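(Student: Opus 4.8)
The plan is to show that a $d_1$-Cauchy sequence $(u_j)\subset\mathcal{E}^1$ has a limit in $(\mathcal{E}^1,d_1)$. First I would pass to a rapidly converging subsequence, say with $d_1(u_j,u_{j+1})\le 2^{-j}$, which suffices since a Cauchy sequence converges once a subsequence does. By Lemma \ref{lem: control sup by d1} the quantities $\sup_X u_j$ are uniformly bounded (the sequence is $d_1$-bounded, being Cauchy), so after normalizing we may assume $\sup_X u_j$ stays in a fixed compact interval; by the compactness property of normalized $b\omega$-psh functions the $u_j$ lie in a relatively compact subset of $L^1(X,\omega^n)$. The standard trick is then to build a decreasing and an increasing sequence sandwiching the tail: set $w_j:=P(u_j,u_{j+1},u_{j+2},\dots)=P(\inf_{k\ge j}u_k)$ and $v_{j,N}:=P(u_j,\dots,u_{j+N})$, so that $v_{j,N}\downarrow w_j$ as $N\to\infty$ and $w_j\nearrow$ some $u\in\mathrm{PSH}(X,\theta)$ (a.e.), with $u\le u_j$-ish up to the rooftop.

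Next I would estimate $d_1(u_j,w_j)$. Using the Pythagorean formula (Lemma \ref{lem: basic properties}(iii)) and Corollary \ref{cor: Darvas formula 3} repeatedly, one gets $d_1(v_{j,N},v_{j,N+1})=d_1(v_{j,N},P(v_{j,N},u_{j+N+1}))\le d_1(u_{j+N},u_{j+N+1})\le 2^{-(j+N)}$ since $v_{j,N}\le u_{j+N}$ and $P(v_{j,N},u_{j+N+1})=v_{j,N+1}$; summing the geometric tail and using Lemma \ref{lem: basic properties}(ii) (since the $v_{j,N}$ decrease) gives $d_1(u_j,v_{j,N})\le d_1(u_j,u_{j+1})+\cdots\le \sum_{k\ge j}2^{-k}\le 2^{-j+1}$, hence $d_1(u_j,w_j)\le 2^{-j+1}$ after letting $N\to\infty$ (using Proposition \ref{prop: BEGZ convergence I and I1} for the monotone convergence $v_{j,N}\downarrow w_j$, which forces $\AM(v_{j,N})\to\AM(w_j)$ and hence $d_1$-convergence). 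In particular $\AM(w_j)\ge\AM(u_j)-2^{-j+1}$ is bounded below, so each $w_j\in\mathcal{E}^1$, and the $w_j$ increase; since $\AM$ is bounded along them (again Lemma \ref{lem: control sup by d1} controls $\sup$, and monotonicity of $\AM$ plus the lower bound give two-sided control), the increasing limit $u:=(\lim_j w_j)^*$ lies in $\mathcal{E}^1$ by Proposition \ref{prop: BEGZ convergence I and I1}, which also gives $d_1(w_j,u)\to 0$.

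Finally, combining $d_1(u_j,w_j)\le 2^{-j+1}$ with $d_1(w_j,u)\to 0$ and the triangle inequality yields $d_1(u_j,u)\to 0$, and since $(u_j)$ was a subsequence of an arbitrary Cauchy sequence, the whole sequence converges to $u$; this proves completeness.

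\textbf{Main obstacle.} I expect the delicate point to be controlling $d_1(u_j,w_j)$ uniformly: one must pass from the finite rooftop envelopes $v_{j,N}=P(u_j,\dots,u_{j+N})$ (where Corollary \ref{cor: Darvas formula 3} and the Pythagorean formula apply cleanly along a decreasing chain) to the infinite envelope $w_j$, and argue that $w_j$ is not too singular — i.e. $w_j\in\mathcal{E}^1$ with $\AM(w_j)$ bounded below independently of $j$. This uses the monotone-convergence half of Proposition \ref{prop: BEGZ convergence I and I1} in an essential way, and one should be careful that the normalization (subtracting $\sup_X u_j$) is done consistently so that Lemma \ref{lem: control sup by d1} indeed keeps all the energies in a fixed band; once the sandwich $w_j\nearrow u$ with both $d_1(u_j,w_j)\to 0$ and $d_1(w_j,u)\to 0$ is in place, the conclusion is immediate.
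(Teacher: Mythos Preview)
Your proposal is correct and follows essentially the same approach as the paper: pass to a rapidly Cauchy subsequence, form the finite rooftop envelopes $v_{j,N}=P(u_j,\dots,u_{j+N})$, show $d_1(u_j,v_{j,N})\le 2^{-j+1}$, pass to the decreasing limit $w_j\in\mathcal E^1$, and then take the increasing limit $w_j\nearrow u\in\mathcal E^1$. The only cosmetic difference is in the telescoping step: the paper obtains $d_1(\varphi_j,\psi_{j,k})\le 2^{-j}+d_1(\varphi_{j+1},\psi_{j+1,k})$ directly from the Pythagorean formula and the triangle inequality, whereas you bound $d_1(v_{j,N},v_{j,N+1})\le d_1(u_{j+N},u_{j+N+1})$ via Corollary \ref{cor: Darvas formula 3} (taking $\varphi=v_{j,N}$ so that $P(u_{j+N},\varphi)=v_{j,N}$ and $P(u_{j+N+1},\varphi)=v_{j,N+1}$); both yield the same geometric estimate. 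One small point of presentation: to conclude $w_j\in\mathcal E^1$ you should first invoke the continuity of $\AM$ along decreasing sequences (stated right after the general definition of $\AM$, or \cite[Proposition 2.10]{BEGZ10}) to get $\AM(w_j)=\lim_N\AM(v_{j,N})\ge\AM(u_j)-2^{-j+1}>-\infty$, and only then apply Proposition \ref{prop: BEGZ convergence I and I1}, since the latter already assumes the limit lies in $\mathcal E^1$.
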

 
\begin{proof}
Given $\{\varphi_j\}_j \subset \mathcal{E}^1$  a Cauchy sequence for $d_1$ we want to extract a convergent subsequence. We can assume that 
$$ 
d_1(\varphi_j,\varphi_{j+1}) \leq 2^{-j}, j \geq 1. 
$$
As in the proof of \cite[Theorem 9.2]{Dar14} we introduce the following sequences 
\[
\psi_{j,k}:=P(\varphi_j,  \varphi_{j+1}, \dots,\varphi_{k}), \ j\in \mathbb{N}, k\geq j. 
\]
Observe that, for $k\geq j+1$, $\psi_{j,k}=P(\varphi_{j},  \psi_{j+1,k})$ and hence it follows from   Lemma \ref{lem: basic properties}(iii) that
$$
d_1( \varphi_j, \psi_{j,k}) \leq d_1( \varphi_j,  \psi_{j+1,k})
\leq d_1(\varphi_j, \varphi_{j+1}) +d_1( \varphi_{j+1}, \psi_{j+1,k})
\leq \frac{1}{2^j} + d_1( \varphi_{j+1}, \psi_{j+1,k}).$$
Repeating this argument several times we arrive at 
\begin{equation}
	\label{eq: completness 1}
	d_1(\varphi_j,\psi_{j,k}) \leq 2^{-j+1}, \ \forall k\geq j+1.
\end{equation}
Using the triangle inequality for $d_1$ and the above we see that
\begin{eqnarray*}
d_1(V_{\theta},\psi_{j,k}) &\leq & d_1(V_{\theta},\varphi_j) + d_1(\varphi_j,\psi_{j,k}) \leq  d_1(V_{\theta}, \varphi_1) + 2 + 2^{-j+1}
\end{eqnarray*}
is uniformly bounded. 
It follows from Theorem \ref{thm: comparison d1 and I1} that 
$I_1(V_\theta,\psi_{j,k})$ is uniformly bounded hence
 $\psi_j:=\lim_{k} \psi_{j,k} $ belongs to $\mathcal{E}^1(X,\theta)$ (\cite[Proposition 2.19]{BEGZ10}). Moreover Proposition \ref{prop: BEGZ convergence I and I1} gives that $d_1(\psi_{j,k},\psi_j) \to 0$. From 
	\eqref{eq: completness 1} we obtain that $d_1(\varphi_j,\psi_j) \leq 2^{-j+1}$, hence we only need to show that the $d_1$-limit of the increasing sequence $\{\psi_j\}_j  \subset \mathcal{E}^1$ is in $\mathcal E^1$.
  
Lemma \ref{lem: control sup by d1} gives that $\sup_X \psi_j$ is uniformly bounded, hence $\psi:= \lim_j \psi_j \in \textup{PSH}(X,\theta)$.	Now $\psi_j$ increases a.e. towards $\psi$, hence by \cite[Proposition 2.14]{BEGZ10} $\psi \in \mathcal{E}^1(X,\theta)$. By Proposition \ref{prop: BEGZ convergence I and I1} we also have that $I_1(\psi_j,\psi)\to 0$. It follows therefore from Theorem \ref{thm: comparison d1 and I1} that
$d_1(\psi_j,\psi) \rightarrow 0$.
\end{proof}

At the end of this section we show that $d_1$-convergence in fact implies $L^1$ convergence of the potentials with respect to any fixed measure $\theta_\psi^n, \ \psi \in \mathcal E^1$, this being the analog of \cite[Theorem 5(ii)]{Dar15}:

\begin{theorem} \label{theorem: int_d1_est} For any $C>0$ there exists a continuous increasing function $g_C:\Bbb R^+ \to \Bbb R^+$ with $g_C(0)=0$ such that
\begin{equation}\label{eq: int_d_1_est} 
\int_X |u-v|\theta_{\psi}^n \leq g_C(d_1(u,v)),
\end{equation}
for $u,v,\psi \in \mathcal E_1$  satisfying $d_1(V_{\theta},u),d_1(V_{\theta},v),d_1(V_{\theta},\psi) \leq C$. 
\end{theorem}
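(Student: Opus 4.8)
The strategy is to reduce the estimate \eqref{eq: int_d_1_est} to the already-established comparison between $d_1$ and $I_1$ (Theorem \ref{thm: comparison d1 and I1}) together with the stability estimate for Monge-Amp\`ere measures in Proposition \ref{prop: I_est}. The point is that $\int_X |u-v|\theta_\psi^n$ involves the fixed measure $\theta_\psi^n$, whereas $I_1(u,v)$ only controls integrals against the ``moving'' measures $\theta_u^n$ and $\theta_v^n$; so the heart of the matter is a comparison between $\int_X |u-v|\theta_\psi^n$ and $\int_X|u-v|(\theta_u^n+\theta_v^n)$, which is exactly the kind of statement Proposition \ref{prop: I_est} is designed for.

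First I would reduce to potentials with uniformly bounded $I_1$-energy: by Theorem \ref{thm: comparison d1 and I1}, the hypothesis $d_1(V_\theta,u),d_1(V_\theta,v),d_1(V_\theta,\psi)\leq C$ gives $I_1(u,V_\theta),I_1(v,V_\theta),I_1(\psi,V_\theta)\leq 3\cdot 2^{n+2}(n+1)C=:C'$. Next, using the plurifine/Pythagorean identity \eqref{eq: plurifine Pythagore max} one may as well replace $u$ by $\max(u,v)$ and handle $\int_X|u-v|\theta_\psi^n$ in two pieces, but more simply I would just write, for any $w\in\{u,v,\psi\}$,
\begin{equation*}
\int_X |u-v|\theta_\psi^n = \int_X(u-v)\theta_\psi^n - 2\int_X(u-v)\mathbbm{1}_{\{u\leq v\}}\theta_\psi^n,
\end{equation*}
and similarly with $\theta_u^n$ in place of $\theta_\psi^n$. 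Subtracting, the difference $\int_X|u-v|\theta_\psi^n-\int_X|u-v|\theta_u^n$ is a sum of two terms each of the form $\int_X h\,(\theta_\psi^n-\theta_u^n)$ where $h$ is a difference of two potentials with bounded energy (namely $u-v$, and $u-v$ truncated on a plurifine Borel set — one should instead apply Proposition \ref{prop: I_est} to honest potential differences, so I would use $h=u-\max(u,v)$ and $h=v-\max(u,v)$ after the reduction of the previous sentence). Proposition \ref{prop: I_est} then bounds each such term by $f_{C'}\big(I_1(\max(u,v),u)\big)$ or $f_{C'}\big(I_1(\max(u,v),v)\big)$, both of which are at most $f_{C'}(I_1(u,v))$ since $I_1(\max(u,v),u)+I_1(\max(u,v),v)=I_1(u,v)$ by \eqref{eq: plurifine Pythagore max} and $f_{C'}$ is increasing.

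Putting this together: $\int_X|u-v|\theta_\psi^n\leq \int_X|u-v|\theta_u^n + 2f_{C'}(I_1(u,v)) \leq I_1(u,v)+2f_{C'}(I_1(u,v))$, and then invoking the lower bound $I_1(u,v)\leq 3\cdot 2^{n+2}(n+1)d_1(u,v)$ from Theorem \ref{thm: comparison d1 and I1} and monotonicity of $f_{C'}$, we obtain \eqref{eq: int_d_1_est} with
\begin{equation*}
g_C(t) := 3\cdot 2^{n+2}(n+1)t + 2 f_{C'}\big(3\cdot 2^{n+2}(n+1)t\big), \qquad C' = 3\cdot 2^{n+2}(n+1)C,
\end{equation*}
which is continuous, increasing, and vanishes at $0$ because $f_{C'}$ does.

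\textbf{Main obstacle.} The one step that requires genuine care is the bookkeeping in the reduction of $\int_X|u-v|\theta_\psi^n - \int_X|u-v|\theta_u^n$ to a sum of integrals $\int_X h\,(\theta_\psi^n-\theta_u^n)$ with $h$ a \emph{genuine} difference of two $\mathcal E^1$-potentials having $I_1$-energy bounded in terms of $C'$ — one cannot feed truncated functions like $\mathbbm{1}_{\{u\le v\}}(u-v)$ directly into Proposition \ref{prop: I_est}. The clean fix, as indicated above, is to first split via \eqref{eq: plurifine Pythagore max} so that everything is phrased through $\max(u,v)$, for which $|u-\max(u,v)|$ and $|v-\max(u,v)|$ are themselves of the form $|{\rm potential}-{\rm potential}|$, and to check that $\max(u,v)\in\mathcal E^1$ with $I_1(\max(u,v),V_\theta)$ controlled by $C'$ (which follows since $V_\theta\ge\max(u,v)\ge$ ... is false, but $I_1(\max(u,v),V_\theta)\le I_1(u,V_\theta)+I_1(v,V_\theta)$ via \eqref{eq: plurifine Pythagore max} and $d_1\le I_1$, up to harmless constants). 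Beyond this, everything is a direct assembly of the cited results.
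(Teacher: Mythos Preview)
Your proposal is correct and follows essentially the same route as the paper: bound $I_1(\cdot,V_\theta)$ for $u,v,\psi$ via Theorem \ref{thm: comparison d1 and I1}, split $|u-v|=(\max(u,v)-u)+(\max(u,v)-v)$ using \eqref{eq: plurifine Pythagore max}, apply Proposition \ref{prop: I_est} to each piece, and finish with $I_1(u,v)\leq C_n\, d_1(u,v)$. The only point you leave slightly loose is the bound on $I_1(\max(u,v),V_\theta)$; the paper does this explicitly via the chain $I_1(\max(u,v),u)\leq I_1(u,v)\leq C_1 \Rightarrow d_1(\max(u,v),u)\leq C_1 \Rightarrow d_1(\max(u,v),V_\theta)\leq C_1+C \Rightarrow I_1(\max(u,v),V_\theta)\leq C_2$, and your parenthetical claim $I_1(\max(u,v),V_\theta)\leq I_1(u,V_\theta)+I_1(v,V_\theta)$ is in fact also true directly by the plurifine locality of $\theta_{\max(u,v)}^n$ together with $|\max(u,v)-V_\theta|\leq |u-V_\theta|+|v-V_\theta|$.
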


\begin{proof} By the triangle inequality for $d_1$ it follows that $d_1(u,v)\leq 2C$, hence $I_1(u,v)\leq C_1:=3\cdot 2^{n+3}(n+1)C$, by Theorem \ref{thm: comparison d1 and I1}.  It then follows from \eqref{eq: plurifine Pythagore max} that $I_1(\max(u,v),u) \leq C_1$, hence again Theorem \ref{thm: comparison d1 and I1} yields $d_1(\max(u,v),u)\leq C_1$. The triangle inequality for $d_1$ and  Theorem \ref{thm: comparison d1 and I1} then give $I_1(\max(u,v), V_{\theta})\leq C_2$, where $C_2$ depends on $n,C_1$. Therefore,    $I_1(V_{\theta},u),I_1(V_{\theta},v),I_1(V_{\theta},\psi),I_1(\max(u,v), V_{\theta})$ are uniformly bounded by a constant  $A>0$ depending on $C$. Consequently \eqref{eq: I_script_est2} and \eqref{eq: plurifine Pythagore max} give that
\begin{equation}\label{eq: interm_est}
\int_X (\max(u,v)-v)\theta_{\psi}^n \leq f_A(I_1(\max(u,v),v)) + \int_X |u-v|\theta_u^n \leq f_A(I_1(u,v)) + I_1(u,v).
\end{equation}
Similarly, 
\begin{equation}\label{eq: interm_est 2}
\int_X (\max(u,v)-u)\theta_{\psi}^n \leq f_A(I_1(u,v)) + I_1(u,v).
\end{equation}
Since $|u - v|= (\max(u,v) - v) + (\max(u,v)-u)$, from Theorem \ref{thm: comparison d1 and I1},   \eqref{eq: interm_est} and \eqref{eq: interm_est 2} we obtain \eqref{eq: int_d_1_est}.
\end{proof}

\subsection{Geodesic segments in $(\mathcal{E}^1,d_1)$}

In this subsection we show that the weak geodesics introduced in Section 2.3 give rise to metric geodesics with respect to the $d_1$ metric geometry.
We first establish the following elementary result. 
\begin{lemma}\label{lem: stability of geodesic segments}
	Assume that $\varphi,\psi \in \mathcal{E}^1$. Let $\varphi^j, \psi^j$ be sequences of $\theta$-psh functions with minimal singularity type decreasing to $\varphi$ and $\psi$ respectively. For each $j$, let $t \mapsto \varphi_t^j$ be the Mabuchi geodesic segment connecting $\varphi^j$ and $\psi^j$. Then $\varphi_t^j$ decreases  to $\varphi_t, t \in [0,1]$, the Mabuchi geodesic segment connecting $\varphi$ and $\psi$. 
\end{lemma}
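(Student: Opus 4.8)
The plan is to prove the two assertions of Lemma \ref{lem: stability of geodesic segments} separately: first, monotonicity of the family $\{\varphi^j_t\}_j$, and second, that the decreasing limit is indeed the weak geodesic connecting $\varphi$ and $\psi$.

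\textbf{Monotonicity.} Since $\varphi^{j+1}\leq \varphi^j$ and $\psi^{j+1}\leq\psi^j$, every subgeodesic in $\mathcal{S}_{[0,1]}(\varphi^{j+1},\psi^{j+1})$ is also a subgeodesic in $\mathcal{S}_{[0,1]}(\varphi^j,\psi^j)$ (the boundary conditions become weaker when the endpoints decrease). Taking the upper envelope of the respective families yields $\varphi^{j+1}_t\leq \varphi^j_t$ for all $t\in[0,1]$. Thus $\psi_t:=\lim_j\varphi^j_t$ exists. Because the $\varphi^j$ have minimal singularities, each $t\mapsto\varphi^j_t$ is a weak geodesic with minimal singularities; moreover, a standard comparison (using $A_j=\|\,|\varphi^j-\psi^j|\,\|_{L^\infty}$ and the subgeodesic $\max(\varphi^j-A_jt,\psi^j-A_j(1-t))$ from Section 2.3) together with the convexity bound $\varphi^j_t\leq(1-t)\varphi^j+t\psi^j$ shows the $\varphi^j_t$ stay within a controlled range. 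In particular $\psi_t$ is not identically $-\infty$ provided $P(\varphi,\psi)\in\mathcal{E}^1$, which holds by \cite[Theorem 2.10]{DDL16} since $\varphi,\psi\in\mathcal{E}^1$; indeed $\varphi^j_t\geq P(\varphi^j,\psi^j)\geq P(\varphi,\psi)$.

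\textbf{Identification of the limit.} Let $t\mapsto\varphi_t$ denote the weak Mabuchi geodesic connecting $\varphi$ and $\psi$, i.e.\ the upper envelope of $\mathcal{S}_{[0,1]}(\varphi,\psi)$. For one inclusion: for each $j$, the curve $t\mapsto\varphi^j_t$ is a subgeodesic (its complexification is $\pi^*\theta$-psh) with $\limsup_{t\to 0}\varphi^j_t\leq\varphi^j$ and $\limsup_{t\to1}\varphi^j_t\leq\psi^j$; but $\varphi^j\geq\varphi$ and $\psi^j\geq\psi$ are the wrong direction, so instead we observe that any $u_t\in\mathcal{S}_{[0,1]}(\varphi,\psi)$ is automatically in $\mathcal{S}_{[0,1]}(\varphi^j,\psi^j)$, giving $\varphi_t\leq\varphi^j_t$ for every $j$, hence $\varphi_t\leq\psi_t$. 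For the reverse inequality $\psi_t\leq\varphi_t$: the complexification $\Psi(x,z)=\psi_{\log|z|}(x)$ is a decreasing limit of the $\pi^*\theta$-psh functions $\Phi^j(x,z)=\varphi^j_{\log|z|}(x)$, hence is $\pi^*\theta$-psh on $X\times D_1$ (being not identically $-\infty$ by the previous paragraph), so $t\mapsto\psi_t$ is a subgeodesic; and $\limsup_{t\to0}\psi_t\leq\limsup_{t\to0}\varphi^j_t\leq\varphi^j$ for all $j$, so letting $j\to\infty$, $\limsup_{t\to0}\psi_t\leq\varphi$ (using upper semicontinuity / the fact that $\varphi^j\searrow\varphi$), and similarly at $t=1$. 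Therefore $t\mapsto\psi_t\in\mathcal{S}_{[0,1]}(\varphi,\psi)$, giving $\psi_t\leq\varphi_t$. Combining, $\psi_t=\varphi_t$.

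\textbf{Main obstacle.} The delicate point is checking the boundary behavior: showing that $\limsup_{t\to0}\psi_t\leq\varphi$ (and analogously at $t=1$), because the $\limsup$ need not commute with the decreasing limit in $j$. The clean way is to use the convexity estimate $\varphi^j_t\leq(1-t)\varphi^j+t\psi^j$, which passes to the limit to give $\psi_t\leq(1-t)\varphi+t\psi$; evaluating the $\limsup$ as $t\to0$ of the right-hand side and using that $\psi\in\mathcal{E}^1$ (so $\psi$ is locally bounded above away from its $-\infty$ set, and in any case $(1-t)\varphi+t\psi\to\varphi$ in $L^1$ with the usc regularization controlled) yields $\limsup_{t\to0}\psi_t\leq\varphi$. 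This convexity bound is exactly what makes the argument go through without any further regularity input, and it also re-confirms that $\psi_t$ is a genuine subgeodesic with the correct endpoints.
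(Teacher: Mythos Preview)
Your proof is correct and follows essentially the same approach as the paper's: the inclusion $\mathcal{S}_{[0,1]}(\varphi,\psi)\subset\mathcal{S}_{[0,1]}(\varphi^j,\psi^j)$ gives $\varphi_t\leq\varphi_t^j$, and the decreasing limit of the $\varphi_t^j$ is itself a subgeodesic with the right boundary data, hence a candidate for $\varphi_t$. The paper's version is a three-sentence sketch invoking Lemma~\ref{lem: boundary limit of weak geodesic} for the $t$-Lipschitz property; your ``Main obstacle'' paragraph overstates the difficulty, since the argument you already gave---$\limsup_{t\to0}\psi_t\leq\limsup_{t\to0}\varphi_t^j=\varphi^j$ for each fixed $j$, then let $j\to\infty$---needs no commutation of limits and is exactly what the paper has in mind.
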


\begin{proof}
	Any candidate defining the geodesic $\varphi_t$ is also a candidate defining $\varphi_t^j$ since $\varphi\leq \varphi^j$ and $\psi \leq \psi^j$. Hence by the definition of weak geodesics in Section \ref{section ray} it follows that $\varphi_t\leq \varphi_t^j, \ t \in [0,1]$ for all $j$. Moreover the decreasing limit of the $t \to \varphi_t^j$, as $j\to +\infty$,   is a candidate in the definition of $t \to \varphi_t$, hence the conclusion. 
\end{proof}

As remarked in the preliminaries, if $\varphi,\psi \in \mathcal{E}^1$ then $P(\varphi,\psi) \in \mathcal{E}^1$ as proved in \cite[Theorem 2.10]{DDL16} (in the K\"ahler case this was addressed in \cite[Corollary 3.5]{Dar14}). Since the constant geodesic $t \to P(\varphi,\psi)$ is a candidate for $t \to \varphi_t$, the weak geodesic connecting $\varphi,\psi$, it follows that $P(\varphi,\psi) \leq \varphi_t \in \mathcal E^1$ \cite[Proposition 2.14]{BEGZ10}, hence  we may call $t \to \varphi_t$ the \emph{finite energy geodesic} connecting $\varphi,\psi$. Next we show that not only does $t \to \varphi_t$ stay inside $\mathcal E^1$, but it also has special geometric properties inside this space.

\begin{prop}
 	Let $[0,1]\ni t \mapsto \varphi_t \in \mathcal E^1$ be the finite energy geodesic connecting $\varphi_0,\varphi_1$ in $\Ec^1$. Then $t \to \varphi_t$ is a geodesic in the metric space $(\Ec^1(X, \theta),d_1)$, i.e., for any $t, s \in [0,1]$ we have
\begin{equation*}%\label{eq: d_1_geod_property}
d_1(\varphi_{t},\varphi_s) = |t-s|d_1(\varphi_0,\varphi_1),\quad  \forall t,s\in [0,1].
\end{equation*}
\end{prop}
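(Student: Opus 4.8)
The plan is to reduce the geodesic equality to a pair of inequalities and then leverage the already-established estimates on the Monge-Amp\`ere energy along geodesics.

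\medskip

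\noindent\textbf{Proof proposal.} First I would reduce to the case where $\varphi_0,\varphi_1$ have minimal singularities: by Lemma \ref{lem: stability of geodesic segments} we can take decreasing sequences $\varphi_0^j\searrow \varphi_0$, $\varphi_1^j\searrow\varphi_1$ with minimal singularities, and then $\varphi_t^j\searrow\varphi_t$; by Proposition \ref{prop: BEGZ convergence I and I1} and Theorem \ref{thm: comparison d1 and I1} the $d_1$-distances converge, so it suffices to prove the identity in the minimal singularities case. Next, since $d_1$ is a genuine metric (triangle inequality), the standard reduction applies: if we can show $d_1(\varphi_t,\varphi_s)\le |t-s|\,d_1(\varphi_0,\varphi_1)$ for all $t,s$, then summing over a partition $0=t_0<t_1<\dots<t_N=1$ and using the triangle inequality forces equality everywhere. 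So the whole problem is the single inequality $d_1(\varphi_t,\varphi_s)\le (s-t)\,d_1(\varphi_0,\varphi_1)$ for $0\le t\le s\le 1$, and by a reparametrization (affine substitutions of $t$, which preserve the subgeodesic property) it is enough to treat $d_1(\varphi_0,\varphi_1)\ge d_1(\varphi_t,\varphi_{t'})$ for the endpoints, i.e. to bound $d_1$ between the two endpoints of any subinterval by the total.

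\medskip

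\noindent The heart of the matter is to estimate $d_1(\varphi_0,\varphi_1)=\AM(\varphi_0)+\AM(\varphi_1)-2\AM(P(\varphi_0,\varphi_1))$ from below, and the key geometric input is Lemma \ref{lem: boundary limit of weak geodesic} together with the normalization constants $m_\varphi, M_\varphi$ from \eqref{eq: m_M_normalization}. Since along a weak geodesic with minimal singularities $t\mapsto\varphi_t$ is affine-Lipschitz with slopes between $m_\varphi$ and $M_\varphi$, I expect one can compute the derivative $\frac{d}{dt}\AM(\varphi_t)$ using the formula $\frac{d}{dt}\AM(\varphi_t)=\int_X\dot\varphi_t\,\theta_{\varphi_t}^n$ (valid for curves with bounded $t$-derivative, cf. the formula cited from \cite{BBGZ13} used in the proof of Proposition \ref{prop: Darvas formula 3}), and that the Monge-Amp\`ere equation $(\pi^*\theta+dd^c\Phi)^{n+1}=0$ forces $\frac{d^2}{dt^2}\AM(\varphi_t)=0$, so $t\mapsto\AM(\varphi_t)$ is affine. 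Combined with the Pythagorean formula (Lemma \ref{lem: basic properties}(iii)) and the fact that the restriction of the geodesic to $[t,s]$ is again the weak geodesic between $\varphi_t$ and $\varphi_s$ (a standard consequence of uniqueness of the Perron envelope), one gets $d_1(\varphi_t,\varphi_s)$ expressed in terms of $\AM(\varphi_t)+\AM(\varphi_s)-2\AM(P(\varphi_t,\varphi_s))$, and then one needs to relate $\AM(P(\varphi_t,\varphi_s))$ to the affine function $t\mapsto\AM(\varphi_t)$.

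\medskip

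\noindent The concrete route I would actually follow: establish that $d_1(\varphi_0,\varphi_t)=\AM(\varphi_0)-\AM(\varphi_t)+2(\AM(\varphi_t)-\AM(P(\varphi_0,\varphi_t)))$ and show, using that $\varphi_t$ dominates the constant subgeodesic $P(\varphi_0,\varphi_1)$ and the affinity of $\AM$ along the geodesic, that $\AM(\varphi_t)-\AM(P(\varphi_0,\varphi_t))$ is controlled linearly. Alternatively — and this may be cleaner — I would first prove the upper bound $d_1(\varphi_t,\varphi_s)\le|t-s|\,d_1(\varphi_0,\varphi_1)$ directly: the curve $r\mapsto\varphi_{t+r(s-t)}$ on $[0,1]$ is the geodesic between $\varphi_t,\varphi_s$, so it suffices to bound the ``speed'' $\lim_{s\to 0^+}\frac{1}{s}d_1(\varphi_0,\varphi_s)$ by $d_1(\varphi_0,\varphi_1)$ and check this speed is constant; both follow from affinity of $\AM$ along the geodesic plus the fact that $\theta_{\varphi_s}^n\to\theta_{\varphi_0}^n$ weakly and $\dot\varphi_0$ has sign-definite behavior governed by $m_\varphi\le\dot\varphi_0\le M_\varphi$. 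The main obstacle I anticipate is justifying the differentiation of $\AM$ along the geodesic and the weak convergence $\theta_{\varphi_s}^n\to\theta_{\varphi_0}^n$ as $s\to 0$ in the big setting with only minimal (not bounded relative to $V_\theta$, but we have reduced to minimal singularities so this is fine) singularities — this requires the partial-comparison and plurifine machinery already set up in the preliminaries, but the boundary behavior of the geodesic (Lemma \ref{lem: boundary limit of weak geodesic}) and the Lipschitz bound should make the weak convergence and the energy computation go through.
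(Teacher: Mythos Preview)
Your overall architecture matches the paper: reduce to minimal singularities via Lemma \ref{lem: stability of geodesic segments} and Proposition \ref{prop: BEGZ convergence I and I1}, use that $t\mapsto \AM(\varphi_t)$ is affine along weak geodesics (this is \cite[Theorem 3.12]{DDL16}, so you don't need to re-derive it from $(\pi^*\theta+dd^c\Phi)^{n+1}=0$), prove the one-sided bound $d_1(\varphi_0,\varphi_t)\le t\,d_1(\varphi_0,\varphi_1)$, and then conclude by symmetry and the triangle inequality. That reduction is exactly what the paper does.

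The gap is in the one step that matters: controlling $\AM(P(\varphi_0,\varphi_t))$ from below. Your proposal to use the \emph{constant} subgeodesic $P(\varphi_0,\varphi_1)$ only yields $P(\varphi_0,\varphi_t)\ge P(\varphi_0,\varphi_1)$, hence
\[
d_1(\varphi_0,\varphi_t)\le \AM(\varphi_0)+\AM(\varphi_t)-2\AM(P(\varphi_0,\varphi_1))
= t\,d_1(\varphi_0,\varphi_1)+2(1-t)\big(\AM(\varphi_0)-\AM(P(\varphi_0,\varphi_1))\big),
\]
which carries an unwanted $(1-t)$ term. Your alternative ``speed'' approach requires computing $\lim_{s\to 0^+}\tfrac{1}{s}\big(\AM(\varphi_0)-\AM(P(\varphi_0,\varphi_s))\big)$; this involves the psh envelope $P(\varphi_0,\varphi_s)$, not merely $\min(\varphi_0,\varphi_s)$, and neither weak convergence of $\theta_{\varphi_s}^n$ nor the bounds $m_\varphi\le\dot\varphi_0\le M_\varphi$ tell you what this limit is or why it equals $d_1(\varphi_0,\varphi_1)$ in the big setting.

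The missing idea, which the paper supplies, is to compare with the \emph{geodesic} $[0,1]\ni t\mapsto\psi_t$ joining $\varphi_0$ to $P(\varphi_0,\varphi_1)$ rather than the constant curve. The comparison principle gives $\psi_t\le\varphi_t$ and $\psi_t\le\varphi_0$, hence $\psi_t\le P(\varphi_0,\varphi_t)$; affinity of $\AM$ along $\psi_t$ then yields exactly
\[
\AM(P(\varphi_0,\varphi_t))\ge \AM(\psi_t)=(1-t)\AM(\varphi_0)+t\,\AM(P(\varphi_0,\varphi_1)),
\]
which is the sharp lower bound needed for $d_1(\varphi_0,\varphi_t)\le t\,d_1(\varphi_0,\varphi_1)$. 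Once you insert this auxiliary geodesic, your outline becomes a complete proof.
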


\begin{proof}
Let $\varphi_0^j, \varphi_1^j $ be sequences of $\theta$-psh functions with minimal singularity type decreasing to $\varphi_0, \varphi_1$, respectively. Combining Lemma \ref{lem: stability of geodesic segments}, Theorem \ref{thm: comparison d1 and I1} and Proposition \ref{prop: BEGZ convergence I and I1} we obtain that $d_1(\varphi_t^j, \varphi_s^j)\rightarrow d_1(\varphi_t, \varphi_s)$ as $j$ goes to $+\infty$.
Consequently, we can then assume that $\varphi_0, \varphi_1$ have minimal singularity type and that $t \to \varphi_t$ is a weak geodesic segment with minimal singularity type. 

Since for each $t\in (0,1]$ the curve $[0,t] \ni \ell \mapsto \varphi_{\ell}$ is a weak geodesic segment connecting $\varphi_0$ and $\varphi_t$, it suffices to treat  the case when $s=0, t\in (0,1]$. It follows from \cite[Theorem 3.12]{DDL16} that $\AM$ is linear along $\varphi_t$, i.e. $\AM(\varphi_t)= t\AM(\varphi_1)+(1-t)\AM(\varphi_0)$. Hence by definition of $d_1$ we have 
	\begin{equation}\label{constant speed}
	\frac{1}{2}(d_1(\varphi_t,\varphi_0) - td_1(\varphi_0,\varphi_1)) = t\AM(P(\varphi_0,\varphi_1)) + (1-t) \AM(\varphi_0) - \AM(P(\varphi_t,\varphi_0)). 
	\end{equation}
	Let $[0,1]\ni t\mapsto \psi_t$ be the weak geodesic segment connecting $\varphi_0$ and $P(\varphi_0,\varphi_1)$. By the comparison principle (Proposition \ref{prop: DDL_comp_princ_geod}) we have that $\psi_t\leq \varphi_t$ and $\psi_t\leq\varphi_0$ for any $t\in [0,1]$, hence $\psi_t\leq P(\varphi_t,\varphi_0), \ t \in [0,1]$.  The fact that $t\mapsto \AM(\psi_t)$ is affine together with the monotonicity of $I$ give
	\begin{equation}\label{eq 1}
	t\AM(P(\varphi_0,\varphi_1)) + (1-t) \AM(\varphi_0) - \AM(P(\varphi_t,\varphi_0))\leq t\AM(P(\varphi_0,\varphi_1)) + (1-t) \AM(\varphi_0) - \AM(\psi_t)=0. 
	\end{equation}
Combining \eqref{constant speed} and \eqref{eq 1} we get $d_1(\varphi_t,\varphi_0)\leq t d_1(\varphi_1,\varphi_0)$. By symmetry it follows that $d_1(\varphi_t,\varphi_1) \leq (1-t) d_1(\varphi_0,\varphi_1)$. These two inequalities combined with the triangle inequality imply
$$t d_1(\varphi_1,\varphi_0)\geq d_1(\varphi_t,\varphi_0) \geq d_1(\varphi_0,\varphi_1)- d_1(\varphi_t,\varphi_1)\geq t d_1(\varphi_0,\varphi_1),$$
hence $d_1(\varphi_t,\varphi_0)=t d_1(\varphi_1,\varphi_0)$.
\end{proof}

\section{Construction of weak geodesic rays}\label{section: rays}

In the K\"ahler case Ross and Witt Nystr\"om described a very general method to construct weak geodesic rays with bounded potentials \cite{RWN}. In this section we show that their construction generalizes to the big case to construct weak geodesic rays with potentials of minimal singularity type. We fix from now on a potential $\phi \in \PSH(X,\theta)$ with minimal singularity type.   
\subsection{From test curves to subgeodesic rays and back}
 We first start with a few definitions.

\begin{definition}\ref{def: test curve}
We say that a (weak) subgeodesic ray $t\mapsto h_t$ with minimal singularity type is $t$-Lipschitz if there exists $L>0$ such that 
$$
h_t(x) \leq h_s(x) +  L |t-s|, \ \forall t,s\in \mathbb{R}^+, \forall x\in X. 
$$
\end{definition}
Examples of $t$-Lipschitz  subgeodesics are $t\rightarrow \max(\psi,\phi-t)$, where $\psi \in  \PSH(X,\theta)$ with  minimal singularity type. Also,  note that weak geodesic rays with minimal singularity type $t \to \psi_t$ are automatically $t$-Lipschitz with $L=\max\{|m_{\psi_t}|,|M_{\psi_t}|\}$ (see \eqref{eq: m_M_normalization}).

Following the terminology of Ross and Witt Nystr\"om \cite{RWN} we introduce test curves:

\begin{definition}\label{def: test curve}
A map $\mathbb{R}\ni \tau \rightarrow \psi_\tau\in \PSH(X, \theta)$ is a \emph{test curve} if
\begin{itemize}
\item[(i)] $\tau\rightarrow \psi_\tau(x)$ is concave for any $x\in X$,
\item [(ii)] there exists $C_{\psi}>0$ such that $\psi_\tau$ is equal to some potential with minimal singularity type $\phi\in \psh(X, \theta)$ for $\tau < -C_\psi$, and $\psi_\tau\equiv -\infty$ if $\tau >C_\psi$. 
\end{itemize}
\end{definition}

In the case when $\theta$ is K\"ahler, treated in \cite{RWN}, the additional assumption that each $\psi_\tau \in \psh(X,\theta)$ has small unbounded locus was also included in the above definition. Ross and Witt Nystr\"om later observed that this assumption is not necessary (see the proof of \cite[Theorem 2.9]{Dar13}) and we work with this more general definition here as well.

We recall the \emph{Legendre transform}, adjusted to our special case of interest. Given a convex function  $[0, +\infty)\ni t\rightarrow f(t) \in \Bbb R$, its Legendre transform is defined as \[\hat f(\tau):= \inf_{t \geq 0} (f(t)-t\tau), \ \tau\in \mathbb{R}.
 \]
The \emph{(inverse) Legendre transform} of a decreasing concave function $\mathbb{R}\ni\tau\rightarrow g(\tau) \in \mathbb{R}\cup \{-\infty\}$ is
\[
\check{g}(t):=\sup_{\tau \in \Bbb R} (g(\tau)+t\tau), \ t \geq 0.
\]
We point out that there is a sign difference in our choice of Legendre transform compared to the literature, however this particular choice will be more suited in the context of our investigations. 

As it is well known, for every $\tau \in \mathbb{R}$ we have that $\hat{\check{g}}(\tau) \geq g(\tau)$ with equality if and only if $g$ is additionally upper semicontinuous at $\tau$. Similarly, $\check{\hat{f}}(t) \leq f(t)$ for all $t\geq 0$ with equality if and only if $f$ is lower semicontinuous at $t$.   
We will refer to these identities as the involution property of the Legendre transform. For a detailed treatment of Legendre transforms we refer to \cite[Chapter 26]{Rock}.

Starting with a test curve $\tau \to \psi_\tau$, our goal will be to construct a geodesic/subgeodesic ray by taking the $\tau$ inverse Legendre transform of $\tau \to \psi_\tau$. As shown below,  the resulting curve $t \to \check \psi_t$ is a subgeodesic, and under additional conditions it will be a weak geodesic. 

Before we detail our constructions, let us first address one annoying technical issue. Let $\tau \to \psi_\tau$ be a test curve, and $x \in X$. By Definition \ref{def: test curve}, the concave function $\tau \to \psi_\tau(x)$ may not be $\tau$-usc (usc in the the $\tau$ direction), hence the involution property may not hold for it, i.e., $\hat {\check \psi}_\tau \neq \psi_\tau$. We address this with the next simple lemma, that points out that by changing at most one K\"ahler potential along the curve $\tau \to \psi_\tau$, we get a new $\tau$-usc test curve, whose (inverse) Legendre transform coincides with the one of $\tau \to \psi_\tau$. Consequently, \emph{there is no loss of generality in considering $\tau$-usc test curves in our constructions below.}

To start, for a test curve $\tau \to \psi_\tau$ we introduce the following two constants:
$$\tau_\psi^+ := \inf\{\tau \in \Bbb R \ | \ \psi \equiv -\infty\},$$
$$\tau_\psi^- := \sup\{\tau \in \Bbb R \ | \ \psi \equiv \phi\}.$$

\begin{lemma}\label{lem: uscify}
	Let $\tau \to \psi_{\tau}$ be a test curve and $x \in X$. Then $(-\infty,\tau_\psi^+) \ni \tau \to \psi_\tau(x)$ is $\tau$-usc. Additionally, the test curve $\tau \to \overline\psi_\tau$ defined below is $\tau$-usc, and $\check{\overline \psi}_t = \check\psi_t$ for all $t \geq 0$.
$$
\overline \psi_\tau=
\begin{cases}
\psi_\tau \  \ \ \ \ \ \ \ \ \ \  \textup{ if } \ \tau < \tau_\psi^+,\\
\lim_{s \nearrow \tau_\psi^+} \psi_s \ \textup{ if } \ \tau = \tau_\psi^+,\\
-\infty  \  \  \  \ \  \ \ \ \ \textup{ if } \ \tau > \tau_\psi^+.
\end{cases}
$$
\end{lemma}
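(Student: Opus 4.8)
The statement has two parts: first, that $\tau \mapsto \psi_\tau(x)$ is automatically $\tau$-usc on the open ray $(-\infty,\tau_\psi^+)$; and second, that the modified curve $\overline\psi$ is a $\tau$-usc test curve with the same inverse Legendre transform. I would handle the first part by a direct convexity argument. Fix $x \in X$ and $\tau_0 < \tau_\psi^+$. Since $\tau \mapsto \psi_\tau(x)$ is concave and finite on a neighbourhood of $\tau_0$ (because $\tau_0$ is strictly below $\tau_\psi^+$, where the curve is identically $-\infty$, and the curve is finite for $\tau$ small), a finite concave function on an open interval is continuous there, so in particular usc at $\tau_0$. The only subtlety is near $\tau_\psi^+$ itself, but that point is excluded from the open ray, so there is nothing to check. (One should note that at $\tau$ slightly below $\tau_\psi^+$ the value $\psi_\tau(x)$ is still finite: if it were $-\infty$ on an interval $(\tau_\psi^+ - \delta, \tau_\psi^+)$ for some $x$, that is fine for the pointwise statement; what matters is concavity gives continuity wherever the function is real-valued, and one-sided behaviour at an endpoint of its domain of finiteness is automatically usc for a concave function.)

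\textbf{The $\tau$-usc modification.} For the second part, I would verify in turn that $\overline\psi$ is a test curve, that it is $\tau$-usc, and that $\check{\overline\psi} = \check\psi$. That $\overline\psi_\tau \in \PSH(X,\theta)$ for $\tau = \tau_\psi^+$: this is a decreasing limit of $\theta$-psh functions $\psi_s$ as $s \nearrow \tau_\psi^+$ (decreasing because of concavity and the fact that the curve eventually hits $-\infty$), so either it is $\theta$-psh or identically $-\infty$; in the latter degenerate case $\overline\psi = \psi$ and there is nothing to prove. Concavity of $\tau \mapsto \overline\psi_\tau(x)$ is inherited since we only possibly raised the value at the single point $\tau_\psi^+$ to the limit from the left, and a concave function's left-limit at a point always dominates or equals the function value there, so replacing the value by the left-limit preserves concavity. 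The minimal-singularities condition for $\tau < -C_\psi$ is untouched. For $\tau$-upper-semicontinuity: on $(-\infty,\tau_\psi^+)$ it holds by the first part; at $\tau_\psi^+$ we have $\overline\psi_{\tau_\psi^+}(x) = \lim_{s\nearrow \tau_\psi^+}\psi_s(x) = \limsup_{s \to \tau_\psi^+}\overline\psi_s(x)$ (the limit from the right is $-\infty$), which is exactly the usc condition; and for $\tau > \tau_\psi^+$ it is the constant $-\infty$.

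\textbf{Equality of Legendre transforms.} Finally, $\check{\overline\psi}_t(x) = \sup_{\tau\in\mathbb{R}}(\overline\psi_\tau(x) + t\tau)$ versus $\check\psi_t(x) = \sup_{\tau\in\mathbb{R}}(\psi_\tau(x) + t\tau)$. Since $\overline\psi \geq \psi$ pointwise, we get $\check{\overline\psi}_t \geq \check\psi_t$ immediately. For the reverse, the only place the two curves differ is $\tau = \tau_\psi^+$, so it suffices to show that the single term $\overline\psi_{\tau_\psi^+}(x) + t\tau_\psi^+$ does not exceed $\check\psi_t(x)$; but $\overline\psi_{\tau_\psi^+}(x) + t\tau_\psi^+ = \lim_{s \nearrow \tau_\psi^+}(\psi_s(x) + s\tau_\psi^+) \leq \liminf_{s\nearrow\tau_\psi^+}\big(\psi_s(x) + s\cdot t + s(\tau_\psi^+ - t)\big)$ — more cleanly, for $s < \tau_\psi^+$ one has $\psi_s(x) + s\tau_\psi^+ = (\psi_s(x) + st) + s(\tau_\psi^+ - t)$, and taking $s \nearrow \tau_\psi^+$ the first bracket is $\leq \check\psi_t(x)$ while the correction $s(\tau_\psi^+-t) \to \tau_\psi^+(\tau_\psi^+-t)$; so instead I would argue directly: $\overline\psi_{\tau_\psi^+}(x)+t\tau_\psi^+ = \lim_{s\nearrow\tau_\psi^+}(\psi_s(x)+t s)\cdot$ —no. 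The correct clean argument: $\overline\psi_{\tau_\psi^+}(x) + t\tau_\psi^+ = \lim_{s \nearrow \tau_\psi^+}\psi_s(x) + t\tau_\psi^+ = \lim_{s\nearrow\tau_\psi^+}\big(\psi_s(x) + ts\big) + \lim_{s\nearrow\tau_\psi^+} t(\tau_\psi^+ - s) = \lim_{s\nearrow\tau_\psi^+}\big(\psi_s(x)+ts\big) \leq \sup_{s}(\psi_s(x)+ts) = \check\psi_t(x)$, using $t(\tau_\psi^+-s)\to 0$. Hence $\check{\overline\psi}_t(x) = \max\big(\check\psi_t(x),\, \overline\psi_{\tau_\psi^+}(x)+t\tau_\psi^+\big) = \check\psi_t(x)$. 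I do not expect any serious obstacle here; the only point requiring a little care is the degenerate case where the left-limit at $\tau_\psi^+$ is $-\infty$, which must be disposed of separately (it is trivial), and making sure the left-limit of the $\theta$-psh functions is genuinely a decreasing limit so that Guedj--Zeriahi-type closedness of $\PSH(X,\theta)$ under decreasing limits applies.
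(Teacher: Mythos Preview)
Your treatment of the second assertion (that $\overline\psi$ is a $\tau$-usc test curve with the same inverse Legendre transform) is correct and in fact more detailed than the paper's one-line dismissal; the limit argument $\overline\psi_{\tau_\psi^+}(x)+t\tau_\psi^+=\lim_{s\nearrow\tau_\psi^+}(\psi_s(x)+ts)\leq\check\psi_t(x)$ is exactly right once you clean up the presentation.

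The genuine gap is in the first part. Your claim that ``one-sided behaviour at an endpoint of its domain of finiteness is automatically usc for a concave function'' is false. For a concave function $g:\mathbb{R}\to[-\infty,\infty)$ that is finite on $(-\infty,\tau^*)$ and satisfies $g(\tau^*)=-\infty$, the left limit $\lim_{\tau\nearrow\tau^*}g(\tau)$ can perfectly well be finite (take $g=0$ on $(-\infty,0)$ and $g=-\infty$ on $[0,\infty)$), and then $g$ fails to be usc at $\tau^*$. In the situation at hand this could a priori occur at any point $x$ lying in the polar set of $\psi_{\tau_0}$ for some $\tau_0<\tau_\psi^+$: your purely pointwise concavity argument cannot rule out $\lim_{\tau\nearrow\tau_0}\psi_\tau(x)>-\infty=\psi_{\tau_0}(x)$.

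The paper's proof supplies the missing ingredient by using the global $\theta$-psh structure rather than pointwise concavity alone. Given $s<\tau_\psi^+$, the decreasing limit $u_s:=\lim_{\tau\nearrow s}\psi_\tau$ is itself $\theta$-psh. Now choose any $t\in(s,\tau_\psi^+)$ (this is where $s<\tau_\psi^+$ is used); for every $x_0$ with $\psi_t(x_0)>-\infty$, the concave function $\tau\mapsto\psi_\tau(x_0)$ is finite on $(-\infty,t)$, hence continuous at $s$, so $u_s(x_0)=\psi_s(x_0)$. Thus $u_s=\psi_s$ almost everywhere, and since both are quasi-psh they agree identically. This forces $\lim_{\tau\nearrow s}\psi_\tau(x)=\psi_s(x)$ even at the poles of $\psi_s$, which is precisely what your argument could not reach.
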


\begin{proof}
Suppose $s \in (-\infty, \tau^+_\psi)$. By concavity and the fact that  $\psi_{\tau} = \phi$ for $\tau \leq -C_{\psi}$ it follows that $\tau \to \psi_{\tau}(x)$ is decreasing. Let $u_s$ be the decreasing limit of $\psi_{\tau}, \tau <s$, which is $\theta$-psh. To show that $\tau \to \psi_\tau(x)$ is upper semicontinuous  at $s$, it suffices to prove that $u_s=\psi_s$ everywhere on $X$. Fix $t\in (s, \tau^+_\psi)$ and $x_0\in X$ such that $\psi_{t}(x_0)>-\infty$. It follows that $(-\infty, t)\ni \tau \to \psi_{\tau}(x_0)$ is continuous, hence $u_s(x_0)=\psi_s(x_0)$. Thus $\psi_s = u_s$  almost everywhere in $X$. Since $u_s$ and $\psi_s$ are both quasi-plurisubharmonic, this implies that $\psi_s = u_s$ everywhere (see Lemma \ref{lem: aeineq_everywhere}). 

By the above, we obtain that $\tau \to \overline \psi_\tau(x_0)$ is $\tau$-usc on $\Bbb R$ for any $x_0 \in X$. Additionally, comparing with Definition \ref{def: test curve}, $\tau \to \overline \psi_\tau$ is also test curve and by the definition of the (inverse) Legendre transform we get that $\check{\overline \psi}_t = \check \psi_t$ for all $t \geq 0$. 
\end{proof}

We are ready to establishing the duality between test curves and subgeodesic rays:

\begin{prop}\label{prop: subgeodesic vs test curve}
 The  map $ \psi \to \check \psi$ gives a bijection between $\tau$-usc test curves $\tau \to \psi_\tau$ and $t$-Lipschitz subgeodesic rays $t \to h_t$, with inverse $h \to \hat h$.
\end{prop}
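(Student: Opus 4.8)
The strategy is to establish the bijection by verifying three things: (1) if $\tau\to\psi_\tau$ is a $\tau$-usc test curve, then $t\to\check\psi_t$ is a $t$-Lipschitz subgeodesic ray; (2) if $t\to h_t$ is a $t$-Lipschitz subgeodesic ray, then $\tau\to\hat h_\tau$ is a $\tau$-usc test curve; (3) the two assignments are mutually inverse, which will follow from the involution property of the Legendre transform once (1) and (2) are in place.

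For (1), fix $t_0\geq 0$; since each $\psi_\tau$ is $\theta$-psh and $\tau\to\psi_\tau(x)$ is concave, the finite supremum $\check\psi_{t_0}=\sup_\tau(\psi_\tau+t_0\tau)$ is an increasing union (in $t$) of suprema over compact $\tau$-intervals of $\theta$-psh functions, so $\check\psi_{t_0}\in\PSH(X,\theta)$ after usc regularization; the upper bound $\psi_\tau\leq\phi$ (minimal singularities) combined with $\tau\leq C_\psi$ shows $\check\psi_{t_0}$ has minimal singularities, and the linear-in-$\tau$ shift gives the Lipschitz bound $|\check\psi_t-\check\psi_s|\leq C_\psi|t-s|$. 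To see the complexified curve is $\pi^*\theta$-psh on $X\times D_{+\infty}$, note that for each fixed $\tau$ the function $(x,z)\mapsto\psi_\tau(x)+\tau\log|z|$ is $\pi^*\theta$-psh (it is a pullback of a $\theta$-psh function plus a pluriharmonic term), and $\check\psi_{\log|z|}(x)$ is the usc regularization of the supremum of this family over $\tau$, hence $\pi^*\theta$-psh by the standard Bedford–Taylor/Lelong argument for envelopes; concavity in $\tau$ is exactly what guarantees the sup is locally bounded and the regularization changes nothing in the interior. For (2), given a $t$-Lipschitz subgeodesic ray $h$, convexity of $t\to h_t(x)$ (from $\pi^*\theta$-psh-ness of the $S^1$-invariant complexification) makes $\hat h_\tau=\inf_{t\geq0}(h_t-t\tau)$ a concave-in-$\tau$ family; the Lipschitz constant $L$ forces $\hat h_\tau=h_0+$ (bounded) to stabilize to a minimal-singularities potential for $\tau<-L$ and to be $-\infty$ for $\tau>L$; that each $\hat h_\tau$ is $\theta$-psh follows because an infimum over $t$ of the $\theta$-psh functions $h_t-t\tau$ is $\theta$-psh (decreasing limits / general infima of a family with a uniform lower bound, using that $h_t-t\tau$ is monotone-ish in $t$ by convexity). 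The $\tau$-usc property of $\hat h$ is automatic since a Legendre transform (infimum of affine functions of $\tau$) is always concave and usc in $\tau$.

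The main obstacle I expect is the regularity bookkeeping in step (1): showing that $(x,z)\mapsto\check\psi_{\log|z|}(x)$ is genuinely $\pi^*\theta$-psh on the whole cylinder — in particular that no usc regularization is needed in the $(x,z)$ variables and that the curve is a \emph{subgeodesic} in the precise sense of Definition 2.11 — requires care because the $\psi_\tau$ may have unbounded locus (the paper explicitly drops the small-unbounded-locus hypothesis), so one cannot appeal to the cleanest envelope theorems. The fix is to use that the family $\{\psi_\tau+\tau\log|z| : \tau\in[-C_\psi,C_\psi]\}$ is uniformly bounded above by $\phi+C_\psi\log|z|$ (locally bounded on compact subsets of the cylinder), invoke the standard fact that the usc regularization of such a sup is $\pi^*\theta$-psh and differs from the sup only on a pluripolar set, and then show this pluripolar set is empty using the concavity in $\tau$ together with Lemma \ref{lem: uscify} applied fiberwise — exactly as in the proof of \cite[Theorem 2.9]{Dar13} referenced just above. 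The mutual-inverse claim (3) is then short: $\check{\hat h}=h$ because $t\to h_t(x)$ is convex and continuous (hence lower semicontinuous), and $\hat{\check\psi}=\psi$ because $\tau\to\psi_\tau(x)$ is concave and $\tau$-usc by hypothesis, so the involution property of the Legendre transform applies pointwise in $x$, and both sides are $\theta$-psh so pointwise equality suffices.
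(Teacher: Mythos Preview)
Your plan is essentially the paper's own strategy, and steps (1) and (3) are fine. The one genuine gap is in step (2), where you claim that $\hat h_\tau=\inf_{t\geq 0}(h_t-t\tau)$ is $\theta$-psh ``because an infimum over $t$ of the $\theta$-psh functions $h_t-t\tau$ is $\theta$-psh (decreasing limits / general infima of a family with a uniform lower bound, using that $h_t-t\tau$ is monotone-ish in $t$ by convexity).'' This reasoning does not work: infima of psh functions are \emph{not} psh in general, and convexity in $t$ only tells you that for each fixed $x$ the function $t\mapsto h_t(x)-t\tau$ is first decreasing then increasing (or monotone), with a minimizer $t^*(x)$ that varies with $x$. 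There is no way to realise $\hat h_\tau$ as a decreasing limit of $\theta$-psh functions from this alone.

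The correct tool here, and the one the paper uses, is Kiselman's minimum principle: since the complexification $H(x,z)=h_{\log|z|}(x)$ is $\pi^*\theta$-psh on $X\times D_{+\infty}$ and $S^1$-invariant in $z$, the partial infimum $x\mapsto\inf_{t\geq 0}\bigl(h_t(x)-t\tau\bigr)$ is $\theta$-psh. This is a nontrivial theorem precisely because naive infima fail. Once you invoke Kiselman, the rest of your step (2) (concavity in $\tau$, the stabilization at $\pm L$, and $\tau$-usc from the infimum-of-affine representation) goes through exactly as you wrote. As a minor point, for the usc issue you flag in step (1) the paper simply proves Proposition~\ref{prop: usc} directly (a short compactness-and-concavity argument) rather than routing through \cite{Dar13}; either works, but you need not anticipate as much difficulty there as you do.
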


\begin{proof}
Assume that $\tau \to \psi_{\tau}$ is a test curve such that $\psi_{-\infty}=\phi$. We want to prove that its inverse Legendre transform
$$
 h_t:= \sup_{\tau \in \mathbb{R}} \left(\psi_\tau+t\tau \right)
$$
is a $t$-Lipschitz subgeodesic ray.  By concavity of $\tau \mapsto \psi_{\tau}$ we have that $\psi_{\tau} \leq \phi, \forall \tau$, thus $h_0=\phi$. Moreover, Proposition \ref{prop: usc} below shows that $(t,x)\mapsto h(t,x)$ is $(t,x)$-upper semicontinuous. For each $\tau \in \mathbb{R}$ the curve $t\mapsto \psi_{\tau} + t\tau$ is a subgeodesic ray. Hence, as a supremum of subgeodesic rays that is upper semicontinuous, the curve $t\mapsto h_t$ is also a subgeodesic ray.  It remains to prove that $h_t$ is uniformly Lipschitz in $t$ which is equivalent to showing that $|h_t-\phi| \leq Ct, \forall t\geq 0$, for some positive constant $C>0$. But the latter follows since $\tau \to \psi_{\tau}$ is a test curve:
$$\phi -C_{\psi} t \leq \sup_{\tau \in \mathbb{R}} \left(\psi_\tau+t\tau \right)=\sup_{\tau \in (-\infty, C_{\psi}]} \left(\psi_\tau+t\tau \right)\leq \phi+C_{\psi}t.$$
Since $\tau \to \psi_\tau$ is assumed to be $\tau$-usc, by the involution property we have that $\check{\hat{\psi}}=\psi$. 

To finish the proof, we only have to argue that $\hat{h}$ is a $\tau$-usc test curve for all $t$-Lipschitz subgeodesics $t \to h_t$. Since $t \to h_t$ is $t$-convex and $t$-continuous, it is clear that $\tau \to \hat h_\tau$ is $\tau$-concave and $\tau$-usc. On the other hand, Kiselman's minimum principle (Theorem \ref{thm: Kiselman}) implies that $\hat h_\tau \in \textup{PSH}(X,\theta)$. Since $t \to h_t$ is $t$-Lipschitz (with Lipschitz constant equal to $L$), it follows that property (ii) of test curves also holds for $\tau \to \hat h_\tau$ with $C_{\hat h}=L$. Indeed, if $\tau >L$ we have 
$$
\inf_{t\geq 0} (h_t-t\tau)\leq \inf_{t\geq 0} (\phi+t(L-\tau))=-\infty, 
$$ 
while for $\tau <-L$ we have 
$$
\inf_{t\geq 0} (h_t-t\tau)\geq \inf_{t\geq 0} (\phi-t(L+\tau))=\phi.
$$
\end{proof}

\begin{prop}\label{prop: usc}
Let $\tau\rightarrow \psi_\tau$ be a test curve. Then the function  
\[
 [0,+\infty) \times X \ni (t,x) \mapsto   \sup_{\tau\in \mathbb{R}} ({\psi}_\tau + t\tau)
\] 
is $(t,x)$-upper semicontinuous. 
\end{prop}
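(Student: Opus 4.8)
The plan is to exploit the two defining features of a test curve --- concavity in $\tau$ and the ``clipping'' property (ii), that $\psi_\tau \equiv \phi$ for $\tau < -C_\psi$ and $\psi_\tau \equiv -\infty$ for $\tau > C_\psi$ --- in order to reduce the supremum over all of $\Bbb R$ to a supremum over the compact parameter interval $[-C_\psi, C_\psi]$, and then to invoke a standard upper-semicontinuity result for envelopes of $\theta$-psh functions. First I would observe that for $(t,x) \in [0,+\infty)\times X$ the admissible range of $\tau$ in the sup can be cut down: since $\psi_\tau = -\infty$ for $\tau > C_\psi$ those values contribute nothing, and since $\psi_\tau = \phi$ (a fixed minimal-singularity potential) for $\tau \leq -C_\psi$, while $t \geq 0$, on the half-line $\tau \leq -C_\psi$ the function $\tau \mapsto \psi_\tau(x) + t\tau = \phi(x) + t\tau$ is non-decreasing in $\tau$, so its supremum over $(-\infty, -C_\psi]$ equals $\phi(x) - tC_\psi = \psi_{-C_\psi}(x) - tC_\psi$, which is the same as the value of the integrand at $\tau = -C_\psi$ (up to the obvious boundary bookkeeping, using Lemma \ref{lem: uscify} if one wants $\tau \mapsto \psi_\tau(x)$ to be $\tau$-usc and hence continuous on the interior). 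Hence
\[
\sup_{\tau \in \Bbb R}\big(\psi_\tau(x) + t\tau\big) = \sup_{\tau \in [-C_\psi, C_\psi]}\big(\psi_\tau(x) + t\tau\big).
\]

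Next I would fix a sequence $(t_j, x_j) \to (t_0, x_0)$ and pick, for each $j$, a near-maximizer $\tau_j \in [-C_\psi, C_\psi]$, i.e. $\psi_{\tau_j}(x_j) + t_j \tau_j \geq \sup_\tau (\psi_\tau(x_j) + t_j\tau) - 1/j$. By compactness of $[-C_\psi, C_\psi]$ we may pass to a subsequence with $\tau_j \to \tau_\infty \in [-C_\psi, C_\psi]$. The goal is then
\[
\limsup_{j\to\infty}\big(\psi_{\tau_j}(x_j) + t_j \tau_j\big) \leq \sup_{\tau\in\Bbb R}\big(\psi_\tau(x_0) + t_0\tau\big),
\]
and since $t_j\tau_j \to t_0\tau_\infty$ it suffices to show $\limsup_j \psi_{\tau_j}(x_j) \leq \psi_{\tau_\infty}(x_0)$. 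This is where I expect the main obstacle: one needs joint upper semicontinuity in $(\tau, x)$ of $(\tau, x) \mapsto \psi_\tau(x)$, and the only regularity we are handed is concavity in $\tau$ (plus $\theta$-psh, hence usc, in $x$ for each fixed $\tau$). The clean way through is to introduce the function $\Psi$ on $X \times S$, where $S = \{ \zeta \in \Bbb C : e^{-C_\psi} < |\zeta| < e^{C_\psi}\}$ (or a suitable strip), by $\Psi(x, \zeta) := \psi_{\log|\zeta|}(x)$; concavity in $\tau = \log|\zeta|$ together with $\psi_\tau \in \psh(X,\theta)$ is precisely the statement that $\Psi$ is $\pi^*\theta$-psh on $X \times S$ (this is the standard ``subgeodesic'' correspondence, cf. the complexification discussion in Section 2.3). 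An honest $\pi^*\theta$-psh function is usc, hence $\limsup_{(x,\zeta)\to(x_0,\zeta_0)}\Psi(x,\zeta) \leq \Psi(x_0,\zeta_0)$, which translates back to $\limsup_j \psi_{\tau_j}(x_j)\leq \psi_{\tau_\infty}(x_0)$ as soon as $\tau_\infty$ lies in the open interval $(-C_\psi, C_\psi)$; the two endpoint cases $\tau_\infty = \pm C_\psi$ are handled separately using the explicit description of $\psi_\tau$ there (on the left, $\psi_\tau = \phi$ is usc in $x$ alone and monotone in $\tau$; on the right one uses Lemma \ref{lem: uscify} to replace $\psi_{C_\psi}$ by $\lim_{s\nearrow C_\psi}\psi_s$ without changing the Legendre transform).

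Putting these together: the restriction to $\tau \in [-C_\psi, C_\psi]$ kills the non-compactness, the complexification $\Psi$ supplies joint upper semicontinuity of the integrand on the relevant region, and a standard argument --- sup over a compact parameter set of a jointly-usc family is usc --- finishes the proof. I would phrase the last step as: $h(t,x) = \sup_{\tau\in K}\Phi(t,x,\tau)$ with $K$ compact and $\Phi$ usc on $[0,\infty)\times X\times K$ is usc, because if $(t_j,x_j)\to(t_0,x_0)$ and $\tau_j\to\tau_\infty$ realize the $\limsup$ of $h(t_j,x_j)$, then $\limsup_j h(t_j,x_j) = \limsup_j \Phi(t_j,x_j,\tau_j) \leq \Phi(t_0,x_0,\tau_\infty) \leq h(t_0,x_0)$. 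The only genuinely nontrivial input is the passage from ``concave in $\tau$ and $\theta$-psh in $x$'' to ``jointly usc'', which is exactly the $\pi^*\theta$-plurisubharmonicity of the complexification, and this is the step I would spend the most care on.
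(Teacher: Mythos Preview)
Your overall strategy --- reduce the supremum to the compact interval $[-C_\psi, C_\psi]$, pick near-maximizers $\tau_j$, extract a convergent subsequence $\tau_j \to \tau_\infty$, and reduce to the joint estimate $\limsup_j \psi_{\tau_j}(x_j) \leq \psi_{\tau_\infty}(x_0)$ --- is exactly the skeleton of the paper's proof. The difficulty, as you correctly identify, is this last joint upper-semicontinuity statement. However, your proposed mechanism for it has a genuine gap.

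You claim that concavity of $\tau \mapsto \psi_\tau(x)$ together with $\psi_\tau \in \psh(X,\theta)$ ``is precisely the statement that $\Psi(x,\zeta)=\psi_{\log|\zeta|}(x)$ is $\pi^*\theta$-psh on $X\times S$''. This is not correct: a radial function $\zeta \mapsto f(\log|\zeta|)$ is subharmonic iff $f$ is \emph{convex}, and test curves are \emph{concave} in $\tau$. So $\Psi$ is $\theta$-psh on each $x$-slice but \emph{superharmonic} on each $\zeta$-slice; it is not $\pi^*\theta$-psh, and you cannot read off joint usc from plurisubharmonicity. The subgeodesic correspondence of Section~2.3 goes the other way: it is the $t$-variable (convex) curves whose complexifications are $\pi^*\theta$-psh, and the passage from the concave $\tau$-curve to the convex $t$-curve is precisely the Legendre transform whose regularity is the content of this proposition --- so invoking it here would be circular. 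More fundamentally, upper semicontinuity is part of the \emph{definition} of a psh function; separate regularity in $x$ and $\tau$ does not by itself yield joint usc, and that is exactly what has to be proved.

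The paper closes this gap by a direct three-point concavity inequality instead of complexification. After first observing that $h_t$ is $t$-Lipschitz (so one may freeze $t$ and deal only with $x_j\to x$), one fixes $\ell_1\leq -C_\psi$ and $\ell_1<\ell_2<\tau$ and writes, for $j$ large,
\[
\psi_{\tau_j}(x_j)\ \leq\ \frac{1}{1-\alpha_j}\,\psi_{\ell_2}(x_j)\ -\ \frac{\alpha_j}{1-\alpha_j}\,\psi_{\ell_1}(x_j),\qquad \alpha_j=\frac{\tau_j-\ell_2}{\tau_j-\ell_1}.
\]
The point is that $\ell_2$ is \emph{fixed}, so the usc of $\psi_{\ell_2}$ in $x$ alone applies, while $\psi_{\ell_1}=\phi$ allows one to control the second term once the case $\phi(x_j)\to -\infty$ has been disposed of trivially. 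Passing to the limit in $j$ and then letting $\ell_2\nearrow \tau$ gives the required inequality. This concavity trick is the substantive input you are missing; your endpoint discussion for $\tau_\infty=\pm C_\psi$ is fine, but the interior case needs this argument (or an equivalent one) rather than the complexification.
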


\begin{proof}
Set $h_t:= \sup_{\tau \in \mathbb{R}}(\psi_{\tau}+ t\tau)$. Since $\psi$ is a test curve the supremum can be taken for $\tau\in I$, where $I$ is a compact interval of $\mathbb{R}$ (however the supremum may not be attained as $\tau \to \psi_\tau(x)$ may fail to be upper semicontinuous for some $x \in X$). It follows that $h_t$ is uniformly Lipschitz in $t$, i.e. $|h_t(x)-h_s(x) | \leq C |t-s|$, for all $t,s \in \mathbb{R}^+, x\in X$. Now the upper semicontinuity of $h(t,x)$ reduces to upper semicontinuity of $x\mapsto h_t(x)$ for each $t\geq 0$ fixed. Assume that $X \ni x_j \to x\in X$, and pick $\tau_j \in I$ such that
\[
h_t(x_j) -\frac{1}{j} \leq \psi_{\tau_j}(x_j) + \tau_j t \leq h_t(x_j).
\] 
If $\tau$ is any cluster point of $\tau_j$ then, after possibly extracting a subsequence, we may assume that $\tau_j$ converges to $\tau$. Fix $\ell_1\leq -C_\psi$, $\ell_1<\ell_2<\tau$ and $\alpha_j:= \frac{\tau_j-\ell_2}{\tau_j-\ell_1}$. For $j$ big enough $\alpha_j \in (0,1)$. Note that $(1-\alpha_j)\tau_j+\alpha_j \ell_1=\ell_2$. Hence from the concavity of 
$\tau\rightarrow \psi_\tau$ we get that 
\begin{equation}\label{conc}
\psi_{\tau_j}(x_j)\leq  \frac{1}{1-\alpha_j} \psi_{\ell_2}(x_j)- \frac{\alpha_j}{1-\alpha_j}\psi_{\ell_1}(x_j).
\end{equation}
If $\phi (x_j)\rightarrow -\infty$ then $h_t(x_j)\rightarrow -\infty$. In this case it is trivial that 
$$\limsup_{j\to +\infty} h_t(x_{j}) \leq h_t(x).$$
Consequently, after possibly extracting another subsequence, we can assume that there exists $C>0$ such that $\phi (x_j)\geq -C$. Since $\ell_1 \leq -C_\psi$, this means that $\psi_{\ell_1}(x_j)\geq -C$. %Since $(1-\alpha_j)\tau_j + \alpha_j \ell_1 = \ell_2$ 
Using \eqref{conc} we obtain that
$$
\psi_{\tau_j}(x_j)+t \tau_j \leq  \frac{1}{1-\alpha_j} \left(\psi_{\ell_2}(x_j) +t\ell_2 \right)+t \left(\tau_j - \frac{1}{1-\alpha_j}\ell_2 \right) +C \frac{\alpha_j}{1-\alpha_j}.$$
Using the upper semicontinuity of $\psi_{\ell_2}$ in $x$, we can continue to write:
\begin{eqnarray*}
\limsup_{j\to +\infty} h_t(x_{j}) &\leq &  \frac{1}{1-\alpha} \left(\psi_{\ell_2}(x) +t\ell_2 \right)+t \left(\tau - \frac{1}{1-\alpha}\ell_2 \right) +C \frac{\alpha}{1-\alpha} \\
&\leq & \frac{1}{1-\alpha}  h_t(x) +t \left(\tau - \frac{1}{1-\alpha}\ell_2 \right) +C \frac{\alpha}{1-\alpha},
\end{eqnarray*}
where $\alpha= \frac{\tau-\ell_2}{\tau-\ell_1}.$
Letting $\ell_2\rightarrow \tau$ we get the conclusion.
\end{proof}

\subsection{From maximal test curves to geodesic rays and back}

In this subsection we generalize and slightly extend the construction of weak geodesic rays from \cite{RWN} to the setting of big cohomology classes.

Partially following the terminology of Ross and Witt Nystr\"om \cite{RWN}, a test curve $\tau \to \psi_\tau$ is said to be \emph{maximal} if 
$$P[\psi_\tau](\phi)=\psi_\tau \  \textup{ for all } \  \tau \in \Bbb R,$$ where $\phi = \psi_{-\infty}$, and we use notation and terminology from \cite[Section 1]{DDL17}, as elaborated in \eqref{eq: P_sing_def}.  

In the next result we describe a method to attach a maximal $\tau$-usc test curve to an arbitrary test curve $\tau \to \psi_\tau$. As we will see, taking the (inverse) Legendre transform of the former curve will give a weak geodesic ray.

\begin{prop}\label{prop: maximization} Suppose $\tau \to \psi_\tau$ is a test curve. Then $\tau \to \psi^M_\tau := \overline{P[\psi_\tau](\phi)}$ is a maximal $\tau$-usc test curve.
\end{prop}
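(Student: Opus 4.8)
The goal is to verify the three defining properties of a maximal $\tau$-usc test curve for $\tau \to \psi^M_\tau := \overline{P[\psi_\tau](\phi)}$, namely $\tau$-concavity, property (ii) of Definition \ref{def: test curve}, and the maximality identity $P[\psi^M_\tau](\phi) = \psi^M_\tau$. The key observation is that the operation $\psi \mapsto P[\psi](\phi)$ is monotone in the singularity type of $\psi$, is idempotent (i.e. $P[P[\psi](\phi)](\phi) = P[\psi](\phi)$, since $P[\psi](\phi)$ and $\psi$ have the same singularity type by the properties recalled from \cite{DDL17}), and behaves well with respect to the concavity structure. So the strategy is to first establish that $\tau \to P[\psi_\tau](\phi)$ is already $\tau$-concave and satisfies property (ii), then apply Lemma \ref{lem: uscify} to $\tau$-uscify it without changing anything else, and finally check that uscification and the overline operation do not destroy maximality.

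\textbf{Step 1: $\tau$-concavity of $\tau \to P[\psi_\tau](\phi)$.} Fix $\tau_0 < \tau_1$ and $s \in (0,1)$, and set $\tau_s = (1-s)\tau_0 + s\tau_1$. Since $\tau \to \psi_\tau$ is concave, $\psi_{\tau_s} \geq (1-s)\psi_{\tau_0} + s\psi_{\tau_1}$. The function $(1-s)P[\psi_{\tau_0}](\phi) + sP[\psi_{\tau_1}](\phi)$ is $\theta$-psh, lies below $(1-s)\psi_{\tau_0} + s\psi_{\tau_1} + C$ for any constant $C$ absorbed appropriately (using that $P[\psi_{\tau_i}](\phi) \leq \psi_{\tau_i} + O(1)$ fails in general — rather $P[\psi_{\tau_i}](\phi)$ has the \emph{same singularity type} as $\psi_{\tau_i}$ when $\psi_{\tau_i}$ itself is the envelope, but in general $P[\psi](\phi) \leq \phi$ and it is the maximal $\theta$-psh function with the singularity type of $\min(\psi,\phi)$). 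The clean way is to note $P[\psi_{\tau_i}](\phi) = \mathrm{usc}(\lim_C P(\psi_{\tau_i}+C,\phi))$ and work with the approximants $P(\psi_\tau + C, \phi)$: the convex combination $(1-s)P(\psi_{\tau_0}+C,\phi) + sP(\psi_{\tau_1}+C,\phi)$ is $\theta$-psh and lies below $\min((1-s)\psi_{\tau_0}+s\psi_{\tau_1}+C, \phi) \leq \min(\psi_{\tau_s}+C,\phi)$, hence below $P(\psi_{\tau_s}+C,\phi)$; letting $C \to \infty$ and taking usc gives $(1-s)P[\psi_{\tau_0}](\phi) + sP[\psi_{\tau_1}](\phi) \leq P[\psi_{\tau_s}](\phi)$, which is $\tau$-concavity.

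\textbf{Step 2: property (ii), uscification, and maximality.} For property (ii): when $\tau < -C_\psi$ we have $\psi_\tau = \phi$ with minimal singularities, so $P[\psi_\tau](\phi) = P[\phi](\phi) = \phi$; when $\tau > C_\psi$, $\psi_\tau \equiv -\infty$ so $P[\psi_\tau](\phi) \equiv -\infty$. Thus $\tau \to P[\psi_\tau](\phi)$ is a test curve, and $\tau^+_{\psi^M} \leq C_\psi$. Now apply Lemma \ref{lem: uscify} to this test curve: the overline operation replaces the single potential at $\tau = \tau^+$ by the decreasing limit of the potentials below it and sets $\psi^M_\tau = -\infty$ for $\tau > \tau^+$, producing a $\tau$-usc test curve whose inverse Legendre transform is unchanged. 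It remains to check maximality, $P[\psi^M_\tau](\phi) = \psi^M_\tau$ for all $\tau$. For $\tau < \tau^+_{\psi^M}$ this is idempotency of the envelope: $P[P[\psi_\tau](\phi)](\phi) = P[\psi_\tau](\phi)$, which holds because $P[\psi_\tau](\phi)$ has the same singularity type as $\min(\psi_\tau,\phi)$ and applying $P[\cdot](\phi)$ again returns the maximal element of that same class — this is exactly \cite[Section 1]{DDL17}. For $\tau = \tau^+_{\psi^M}$: the potential there is the decreasing limit $u = \lim_{s \nearrow \tau^+} P[\psi_s](\phi)$, and one checks $P[u](\phi) = u$ by passing to the limit in $P[P[\psi_s](\phi)](\phi) = P[\psi_s](\phi)$ using that $P[\cdot](\phi)$ is continuous along decreasing sequences with uniformly bounded mass (or directly: $u$ is a decreasing limit of maximal envelopes, each $P[\psi_s](\phi) \geq u$, so $P[u](\phi) \leq P[P[\psi_s](\phi)](\phi) = P[\psi_s](\phi)$ for all $s$, giving $P[u](\phi) \leq u$, and $\geq$ is automatic). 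For $\tau > \tau^+$ both sides are $-\infty$.

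\textbf{Main obstacle.} The routine parts are property (ii) and the $\tau < \tau^+$ range of maximality (pure idempotency). The genuinely delicate point is the behavior at the single endpoint $\tau = \tau^+_{\psi^M}$ after uscification: one must confirm that the "limiting" potential inserted by the overline construction is itself a fixed point of $P[\cdot](\phi)$, which requires a continuity/monotonicity argument for the envelope operator along the decreasing sequence $\psi_s \searrow$ as $s \nearrow \tau^+$, and one should be careful that the masses $\int_X \theta^n_{P[\psi_s](\phi)}$ stay controlled so that the limit does not escape $\mathrm{PSH}(X,\theta)$ trivially. I expect the proof to invoke the relevant continuity statement for $P[\cdot](\phi)$ from \cite{DDL17} to close this gap cleanly.
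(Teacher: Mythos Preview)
Your overall architecture matches the paper's: first verify that $\tau \mapsto P[\psi_\tau](\phi)$ is a test curve (concavity and property (ii)), then uscify via Lemma \ref{lem: uscify}, then check maximality. Your concavity argument in Step 1 is correct and spells out what the paper compresses into ``simply by concavity''. Your handling of the endpoint $\tau = \tau^+_{\psi^M}$ is also essentially identical to the paper's.

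However, you have misidentified the main obstacle, and this produces a genuine gap. You treat the range $\tau < \tau^+$ as ``routine'' idempotency, asserting that $P[P[\psi_\tau](\phi)](\phi) = P[\psi_\tau](\phi)$ follows directly from the general properties in \cite[Section 1]{DDL17}. It does not. The idempotency statement you need is exactly the paper's Lemma \ref{lem: domination principle}, whose proof requires the domination principle of \cite[Proposition 3.11]{DDL17}, and that domination principle carries the hypothesis $\int_X \theta_\chi^n > 0$. For an arbitrary $\theta$-psh function $\chi$ with zero non-pluripolar mass there is no such idempotency available. So before you can invoke it you must show $\int_X \theta^n_{P[\psi_\tau](\phi)} > 0$ for every $\tau < \tau^+_\psi$.

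The paper extracts this positivity from the very concavity you established in Step 1: picking $s \in (\tau,\tau^+_\psi)$ and some $\tau_0 \leq -C_\psi$, concavity gives $P[\psi_\tau](\phi) \geq \alpha\,\phi + (1-\alpha)\,P[\psi_s](\phi)$ for some $\alpha \in (0,1)$, and then monotonicity of the non-pluripolar mass (\cite[Theorems 2.2, 2.3]{DDL17}, \cite{WN17}) together with the multilinear bound $\theta^n_{\alpha\phi + (1-\alpha)v} \geq \alpha^n \theta^n_\phi$ yield $\int_X \theta^n_{P[\psi_\tau](\phi)} \geq \alpha^n \int_X \theta^n_\phi > 0$. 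Only then does Lemma \ref{lem: domination principle} apply. In short: the delicate point is not the limiting behavior at $\tau^+$ (which, as you correctly argue, reduces by monotonicity to the interior case), but rather securing the positive-mass hypothesis on the interior so that idempotency is even available.
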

 
\begin{proof} We first prove that $\tau \to \chi_\tau := P[\psi_\tau](\phi)$ is a test curve.  Fix $t<s<r \in \mathbb{R}$. Let $\lambda\in (0,1)$ be such that $s =\lambda t + (1-\lambda) r$. We want to prove that  $\chi_{s} \geq \lambda \chi_t + (1-\lambda) \chi_r$. Fix $C>0$. By $\tau$-concavity of $\tau \mapsto \psi_{\tau}$ we have 
$$
\psi_s + C \geq \lambda (\psi_t +C) +(1- \lambda) (\psi_{r}+C).
$$
Using this estimate  we see that  the function $\lambda P(\psi_t+C,\phi) + (1-\lambda) P(\psi_r+C, \phi)$  is $\theta$-psh and it is not greater than $\min(\psi_s+C,\phi)$.  Therefore, 
$$
P(\psi_r+C, \phi) \geq \lambda P(\psi_t +C,\phi) +(1- \lambda) P(\psi_{r}+C,\phi). 
$$
Letting $C\to +\infty$ we obtain $\chi_{s} \geq \lambda \chi_t + (1-\lambda) \chi_r$ a.e. on $X$. Lemma \ref{lem: aeineq_everywhere} then gives the desired concavity property. We clearly have $\chi_{\tau} = \phi$ for $\tau < \tau_{\psi}^-$ and $\chi_{\tau} =-\infty$ for $\tau > \tau_{\psi}^+$. Hence $\chi_{\tau}$ is a test curve. 
 
 By the definition of $\tau \to \psi^M_\tau$ we have that this curve is $\tau$-usc and that
$$\psi^M_\tau=
\begin{cases}
P[\psi_\tau](\phi) \  \ \ \ \ \ \ \ \ \ \ \ \; \, \textup{ if } \ \tau < \tau_\psi^+,\\
\lim_{s \nearrow \tau_\psi^+} P[\psi_s](\phi) \ \ \, \textup{ if } \ \tau = \tau_\psi^+,\\
-\infty  \  \  \  \ \  \ \ \ \ \ \ \ \ \ \ \ \ \ \; \, \textup{ if } \ \tau > \tau_\psi^+.
\end{cases}$$ 
To show that $\tau \to \psi^M_\tau$ is maximal we need to show that $P[\psi_\tau^M](\phi)=\psi_\tau^M$ for each $\tau \in \Bbb R$. For $\tau > \tau^+_\psi$ this is trivial.

Now we address the case $\tau < \tau^+_\psi$. Pick $s\in (\tau, \tau_\psi^+)$. By concavity in the $\tau$-variable, we have that $P[\psi_\tau](\phi) \geq  \alpha \phi + (1-\alpha) P[\psi_s](\phi)$ for some $\alpha \in (0,1)$. By the monotonicity of Monge-Amp\`ere mass  (see \cite[Theorem 1.2]{WN17} ) we then have 
$$
\int_X \theta_{P[\psi_\tau](\phi)}^n \geq \int_X \left (\theta + i\ddbar (\alpha \phi + (1-\alpha) P[\psi_{s}](\phi)) \right )^n \geq \alpha^n \int_X \theta_{\phi}^n= \alpha^n,
$$
where the last equality follows from the fact that $\phi$ has minimal singularity. By Lemma \ref{lem: mass envelope} we then have $\int_X \theta_{\psi_{\tau}}^n >0$. Consequently, it follows from Lemma \ref{lem: domination principle} below  that $P[\psi_\tau^M](\phi)=\psi_\tau^M.$

Lastly, we address the case $\tau := \tau^+_\psi$. If $s < \tau=\tau^+_\psi$, then by the above we can write
$$P[\psi_\tau^M](\phi) \leq P[\psi_s^M](\phi) = \psi_s^M.$$
Letting $s \nearrow \tau^+_\psi$, by the definition of $\tau \to \psi^M_\tau$, we obtain that $P[\psi_\tau^M](\phi) \leq \psi_\tau^M$. Since the reverse inequality is trivial, we obtain $P[\psi_\tau^M](\phi) = \psi_\tau^M$, hence the result follows.
\end{proof}

\begin{lemma}\label{lem: domination principle}
Suppose that $\phi \in \textup{PSH}(X,\theta)$ has minimal singularity, and $\chi \in \textup{PSH}(X,\theta)$ satisfies $\int_X \theta^n_\chi >0$.  Then $P[\chi](\phi)=P[P[\chi](\phi)](\phi)$.
\end{lemma}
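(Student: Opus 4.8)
Write $\eta:=P[\chi](\phi)$ and $\mu:=P[\eta](\phi)$; the assertion to prove is $\mu=\eta$. The plan is to establish the two inequalities separately: the inclusion $\eta\le\mu\le\phi$ is formal, while the reverse inequality $\mu\le\eta$ comes from the domination principle applied on the contact set of $\eta$.

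For the easy direction, observe first that $\eta\le\phi$, since $P(\chi+C,\phi)\le\phi$ for every $C$. As $\eta$ is itself $\theta$-psh and $\eta\le\phi$, we have $P(\eta,\phi)=\eta$, and hence $P(\eta+C,\phi)\ge P(\eta,\phi)=\eta$ for every $C\ge 0$ by monotonicity of the envelope operation $P_\theta$. Letting $C\to+\infty$ and using that $\eta$ is already upper semicontinuous yields $\mu\ge\eta$, while $P(\eta+C,\phi)\le\phi$ gives $\mu\le\phi$.

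For $\mu\le\eta$ I would invoke two facts. First, mass preservation of the envelope: since $\phi$ has minimal singularities and $\int_X\theta_\chi^n>0$, \cite[Theorems 2.2 and 2.3]{DDL17} together with \cite{WN17} give $\int_X\theta_\eta^n=\int_X\theta_\chi^n>0$. Second, that $\theta_\eta^n$ is carried by the contact set, that is, $\theta_\eta^n(\{\eta<\phi\})=0$; this is the analogue, for an arbitrary minimal-singularity $\phi$, of the fact $\theta^n_{P[\psi]}\le\id_{\{P[\psi]=0\}}\theta^n$ recalled in Section \ref{section: envelope}, and it follows from \cite[Theorem 3.8]{DDL17}. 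Alternatively one argues directly: writing $\eta_C:=P(\chi+C,\phi)\nearrow\eta$, inequality \eqref{eq: env_measure_concentrate} gives $\theta_{\eta_C}^n\le\id_{\{\eta_C=\chi+C\}}\theta_\chi^n+\id_{\{\eta_C=\phi\}}\theta_\phi^n$, and $\{\eta_C=\chi+C\}\subset\{\chi\le\phi-C\}$ shrinks to a pluripolar set, so $\int_{\{\eta_C=\chi+C\}}\theta_\chi^n\to 0$ as $C\to+\infty$, in the spirit of \eqref{eq: E1 by mass} and using that $\theta_\chi^n$ is a finite measure not charging pluripolar sets. Granting these two facts, the conclusion follows at once: on $\{\eta=\phi\}$ one has $\mu=P[\eta](\phi)\le\phi=\eta$, so $\mu\le\eta$ holds $\theta_\eta^n$-almost everywhere; since $\int_X\theta_\eta^n>0$, the domination principle in the form valid for potentials of positive total mass (see \cite[Proposition 2.4]{DDL16} and \cite[Proposition 5.9]{BL12}) upgrades this to $\mu\le\eta$ on all of $X$. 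Combined with $\mu\ge\eta$ this proves $\mu=\eta$.

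The only step needing genuine care is the concentration of $\theta_\eta^n$ on $\{\eta=\phi\}$ for a general $\phi$ with minimal singularities, since the preliminaries record this only for $\phi=V_\theta$; everything else is formal envelope manipulation plus a single use of the domination principle, and it is precisely the hypothesis $\int_X\theta_\chi^n>0$ (hence $\int_X\theta_\eta^n>0$) that makes this last principle applicable without an $\mathcal E^1$ assumption on $\eta$ and $\mu$.
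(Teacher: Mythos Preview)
Your proof is correct and follows essentially the same route as the paper: the trivial inequality $\eta\le\mu$, concentration of $\theta_\eta^n$ on $\{\eta=\phi\}$ via \cite[Theorem 3.8]{DDL17}, and then the domination principle to obtain $\mu\le\eta$. The one difference worth flagging is that the paper first invokes \cite[Theorem 3.12]{DDL17} to show that $\eta$ and $\mu$ have the \emph{same singularity type} before applying the relative domination principle \cite[Proposition 3.11]{DDL17}, whereas you rely only on $\int_X\theta_\eta^n>0$; since the domination principle you cite (\cite[Proposition 2.4]{DDL16}, \cite[Proposition 5.9]{BL12}) is recorded in this paper only for $\mathcal E^1$ potentials, the paper's extra singularity-type step (or a direct appeal to \cite[Proposition 3.11]{DDL17}) is what makes the final application precise.
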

\begin{proof} Since $\phi$ has minimal singularity type, from \cite[Theorem 3.12]{DDL17} it follows that the singularity type of $P[\chi](\phi)$ and $P[P[\chi](\phi)](\phi)$ is the same (because $P[\chi](V_\theta)=P[P[\chi](V_\theta)](V_\theta)$). 

Trivially $P[\chi](\phi) \leq  P[P[\chi](\phi)](\phi)$, however \cite[Theorem 3.8]{DDL17} implies that
$$\theta_{P[\chi](\phi)}^n  \leq \mathbbm{1}_{\{P[\chi](\phi)=\phi\}} \theta_\phi^n.$$
In particular, since $\{P[P[\chi](\phi)](\phi)=\phi\} \subset \{P[\chi](\phi)=\phi\}$, we have that $P[\chi](\phi) \geq  P[P[\chi](\phi)](\phi)$ a.e. with respect to $\theta_{P[\chi](\phi)}^n$. The domination principle \cite[Proposition 3.11]{DDL17} implies that $P[\chi](\phi)\geq P[P[\chi](\phi)](\phi)$, hence in fact $P[\chi](\phi)= P[P[\chi](\phi)](\phi)$.
\end{proof}

\begin{theorem}\label{thm: from test curve to ray} The  map $ \psi \to \check \psi$ gives a bijection between $\tau$-usc maximal test curves $\tau \to \psi_\tau$, and weak geodesic rays with minimal singularity type $t \to u_t$, with inverse map $u \to \hat u$.
\end{theorem}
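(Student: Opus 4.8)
The plan is to build on Proposition \ref{prop: subgeodesic vs test curve}, which already provides a bijection between $\tau$-usc test curves and $t$-Lipschitz subgeodesic rays. What remains is to identify, under this correspondence, which test curves correspond to \emph{geodesic} rays rather than merely subgeodesic ones, and to show that the answer is exactly the maximal ones. So the first step is: given a $\tau$-usc maximal test curve $\tau \to \psi_\tau$, show that $t \to \check\psi_t := \sup_{\tau}(\psi_\tau + t\tau)$ is a weak geodesic ray with minimal singularities. That $t \to \check\psi_t$ has minimal singularities follows from the estimate $\phi - C_\psi t \leq \check\psi_t \leq \phi + C_\psi t$ already recorded in Proposition \ref{prop: subgeodesic vs test curve}, together with the fact that $\phi = \psi_{-\infty}$ has minimal singularities. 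The substantive point is geodesicity: one must show that for every $\ell > 0$, the restriction $[0,\ell] \ni t \to \check\psi_t$ is the weak Mabuchi geodesic between its endpoints, i.e. it dominates every competitor subgeodesic. The idea, following \cite{RWN}, is that maximality of the test curve is precisely what forces the Legendre transform to be \emph{maximal} among subgeodesics with the given boundary data: if $t \to g_t$ is any subgeodesic on $[0,\ell]$ with $g_0 \leq \check\psi_0$ and $g_\ell \leq \check\psi_\ell$, then its own (inverse) Legendre transform $\tau \to \hat g_\tau$ restricted to the appropriate $\tau$-range is a test curve lying below $\tau \to \psi_\tau$, and applying $P[\cdot](\phi)$ and using $P[\psi_\tau](\phi) = \psi_\tau$ recovers the domination $g_t \leq \check\psi_t$. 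Here one uses the concentration of Monge--Amp\`ere mass on contact sets \eqref{eq: env_measure_concentrate} and the domination principle to show that passing to the envelope $P[\cdot](\phi)$ does not destroy the inequality at the level of the reconstructed rays.

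The second step is the converse: given a weak geodesic ray $t \to u_t$ with minimal singularities, show that $\tau \to \hat u_\tau := \inf_{t \geq 0}(u_t - t\tau)$ is a $\tau$-usc maximal test curve. By Proposition \ref{prop: subgeodesic vs test curve} we already know $\tau \to \hat u_\tau$ is a $\tau$-usc test curve (since a geodesic ray is in particular a $t$-Lipschitz subgeodesic ray, by the remark after Definition \ref{def: test curve} using \eqref{eq: m_M_normalization}). The content is maximality: $P[\hat u_\tau](\phi) = \hat u_\tau$ for all $\tau$. The strategy is to apply the maximization operator of Proposition \ref{prop: maximization} to get a maximal $\tau$-usc test curve $\tau \to (\hat u)^M_\tau$ with $(\hat u)^M_\tau \geq \hat u_\tau$, hence $\check{(\hat u)^M}_t \geq \check{\hat u}_t = u_t$ (the last equality by the involution property, since $t \to u_t$ is $t$-continuous, hence $t$-lsc). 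By Step 1, $t \to \check{(\hat u)^M}_t$ is a weak geodesic ray with the same endpoints on each $[0,\ell]$ as $t \to u_t$ (both agree at $t = 0$, equal to $\phi$, and one must check the $t = \ell$ values coincide — this is where one compares $\check{(\hat u)^M}_\ell$ with $u_\ell$ via the sandwich and the fact that $u$ is itself geodesic, hence maximal among subgeodesics). Since a weak geodesic is the \emph{largest} subgeodesic with given endpoints and $\check{(\hat u)^M}_t \geq u_t$ with equal endpoints, we get equality $\check{(\hat u)^M}_t = u_t$, and then applying $\hat{\cdot}$ and using involution forces $(\hat u)^M_\tau = \hat u_\tau$, i.e. $\tau \to \hat u_\tau$ was already maximal.

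Finally, bijectivity follows formally once both maps are shown to land in the right spaces: $\check{\hat u} = u$ holds because geodesic rays are $t$-continuous (involution property of the Legendre transform), and $\hat{\check\psi} = \psi$ holds because maximal test curves are $\tau$-usc (again involution). These two identities, together with Steps 1 and 2, show $\psi \to \check\psi$ and $u \to \hat u$ are mutually inverse bijections between the two classes. The main obstacle I anticipate is Step 1 — specifically, rigorously proving that $t \to \check\psi_t$ is a genuine Mabuchi geodesic (not just a subgeodesic) by showing it dominates all competitors; this requires carefully commuting the Legendre transform with the envelope operation $P[\cdot](\phi)$ and invoking the mass-concentration identity \eqref{eq: env_measure_concentrate} together with the domination principle (Proposition \ref{prop: domination}) at the right moment, exactly as in the delicate part of the Ross--Witt Nystr\"om argument, now in the big setting. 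A secondary technical nuisance will be keeping track of the precise $\tau$-intervals and the boundary values $\tau_\psi^\pm$ when taking Legendre transforms back and forth, which is why the uscification Lemma \ref{lem: uscify} and Proposition \ref{prop: maximization} were set up in advance.
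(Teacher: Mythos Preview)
Your overall plan is sound in structure but has genuine gaps in both steps, and in Step 1 your approach diverges from the paper's in a way that does not obviously work.

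\textbf{Step 1.} You propose to show directly that $\check\psi_t$ dominates every subgeodesic competitor $t\to g_t$ on $[0,\ell]$ with $g_0\leq \phi$, $g_\ell\leq \check\psi_\ell$, by taking its Legendre transform and claiming $\hat g_\tau\leq \psi_\tau$. But the boundary data only give $\hat g_\tau\leq g_0\leq \phi$ and $\hat g_\tau\leq g_\ell-\ell\tau\leq \check\psi_\ell-\ell\tau$; neither of these yields $\hat g_\tau\leq \psi_\tau$. The passage ``applying $P[\cdot](\phi)$ and using $P[\psi_\tau](\phi)=\psi_\tau$ recovers the domination'' is the crux, and you have not explained why it holds. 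The paper does \emph{not} argue this way. Instead it introduces, for each $D>0$, the envelope $w_t^D$ of all subgeodesic rays lying below $\min(\phi+C_\psi t,\, h_t+D)$ (where $h_t=\check\psi_t$), computes via Kiselman and the involution that $w_t^D=\sup_\tau(P(\phi,\psi_\tau+D)+t\tau)$, and then invokes Lemma~\ref{lem: balyage} to conclude that the increasing limit $w_t=\textup{usc}\lim_D w_t^D$ is a genuine weak geodesic ray. Maximality is used only at the end, to identify $w_t$ with $\check\psi_t$ via $P(\phi,\psi_\tau+D)\nearrow P[\psi_\tau](\phi)=\psi_\tau$. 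The point is that Lemma~\ref{lem: balyage} (which crucially uses that $t\to \phi+C_\psi t$ is already a geodesic ray, together with the comparison principle and linearity of $\AM$ along geodesics) is the engine producing geodesicity; you have no substitute for it.

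\textbf{Step 2.} Your plan is to maximize $\hat u$ to $(\hat u)^M$, note $\check{(\hat u)^M}_t\geq u_t$, and conclude equality because both are geodesics with the same endpoints. But you only know they agree at $t=0$; at $t=\ell$ you have $\check{(\hat u)^M}_\ell\geq u_\ell$, and nothing in your sketch forces equality there. You flag this (``one must check the $t=\ell$ values coincide'') but give no mechanism. Without equal endpoints the comparison of geodesic segments gives nothing new. The paper sidesteps this entirely: it simply invokes \cite[Lemma 3.17]{DDL16}, which directly shows that the partial Legendre transform of a weak geodesic ray with minimal singularities is maximal.
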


\begin{proof} Let $\tau \to \psi_\tau$ be a $\tau$-usc maximal test curve. Denote by $t \to h_t$ the inverse Legendre transform of $\psi_\tau$, i.e. $h_t:= \sup_\tau (\psi_\tau+t\tau)$, $t\geq 0$. By Proposition \ref{prop: subgeodesic vs test curve}, $t \to h_t$ is a $t$-Lipschitz subgeodesic ray with minimal singularity type emanating from $\phi$. In particular $\phi-C_{\psi} t\leq h_t \leq \phi+ C_{\psi}t$, $t\geq 0$.

For each $D>0$, let $t \to w_t^D$ be the upper envelope of all subgeodesic rays lying below $\min(\phi+ C_{\psi} t, h_t+D)$. Then $t\mapsto w_t^D$ is a weak subgeodesic ray emanating from $\phi$ which is uniformly Lipschitz in $t$. It follows from Proposition \ref{prop: subgeodesic vs test curve} that the Legendre transform
$\tau \to {\hat{w}_\tau^{D}}$ is a $\tau$-usc test curve.

As $w^D_t \leq \min( \phi+C_\psi t , h_t+D)$, by the involution property of the Legendre transform  we obtain that $ \hat w^D_\tau = \inf_{t\geq 0}(w^D_t - t\tau) \leq \min(\phi, \psi_\tau + D)$. 
By the Kiselman minimum principle (Theorem \ref{thm: Kiselman}),  ${\hat{w}^D_\tau} \in \textup{PSH}(X,\theta)$,  hence  we conclude that
$${\hat{w}^D_\tau} \leq P(\phi, \psi_\tau + D), \   \tau \in \Bbb R.$$
We apply the inverse Legendre transform to this inequality, and use the involution property to conclude  that 
$$
{w}_t^D\leq \sup_{\tau \in \mathbb{R}} \left(P(\phi,\psi_\tau+D ) +t\tau \right), \ t \geq 0. 
$$
As we now argue, this inequality is in fact an equality. Indeed, by construction $\tau \mapsto P(\phi, \psi_{\tau}+D)$ is a test curve,  and by Proposition \ref{prop: subgeodesic vs test curve} and Lemma \ref{lem: uscify}  the curve 
 $$
 t\to  \sup_{\tau \in \mathbb{R}} \left(P(\phi,\psi_\tau+D ) +t\tau \right) =  \sup_{\tau \leq C_{\psi}} \left(P(\phi,\psi_\tau+D ) +t\tau \right)
 $$
 is a subgeodesic ray which is a candidate in the definition of $t \to {w}_t^D$. Thus
\begin{equation}\label{ray 0}
{w}_t^D=\sup_{\tau \in \mathbb{R}} \left(P(\phi,\psi_\tau+D ) +t\tau \right), \ t \geq 0.
\end{equation}
Now, observe that $D\rightarrow w_t^D$ is an increasing sequence and define 
$w_t:= \textup{usc}\left( \lim_{D\rightarrow +\infty} w_t^D\right)$. From Lemma \ref{lem: balyage} below it follows that $w_t$ is a weak geodesic ray emanating from $\phi$. 
By maximality of $\tau \to \psi_\tau$, $P(\phi, \psi_{\tau}+D) \leq P[\psi_{\tau}](\phi)=\psi_\tau$, hence letting $D\to \infty$  in \eqref{ray 0} we get that
\begin{equation}\label{ray 1}
w_t \leq  \sup_{\tau\in \mathbb{R}} \left( P[\psi_\tau](\phi)+t\tau \right)=\sup_{\tau \in \mathbb{R}} \left(\psi_\tau+t\tau \right).
\end{equation}
By construction $w_t\geq w_t^D$ and for each $\tau \in \mathbb{R}$ we have 
\[
w_{t}^D \geq P(\phi,\psi_{\tau} +D) + t\tau.
\]
Letting $D\to +\infty$ we arrive at 
\[
w_t \geq P[\psi_{\tau}](\phi) + t\tau=\psi_{\tau} + t\tau, \ \forall \tau \in \mathbb{R}.
\]
Taking the supremum over all $\tau\in \mathbb{R}$ we have the reverse inequality of \eqref{ray 1}.
It then follows that $t\rightarrow \sup_\tau \left(\psi_\tau +t\tau \right)$ is a weak geodesic ray, because so is $t \to w_t$.

By the involution property, it follows that $\hat {\check \psi}_\tau =\psi_\tau$ for any $\tau$-usc maximal test curve and $\check {\hat u}_t =u_t$ for any weak geodesic $t \to u_t$ with minimal singularity type.

From \eqref{eq: m_M_normalization} it follows that a weak geodesic ray $t \to u_t$ with minimal singularity is automatically $t$-Lipschitz, hence (via Proposition \ref{prop: subgeodesic vs test curve}) $\tau \to \hat u_\tau$ is a $\tau$-usc test curve. Lastly, \cite[Lemma 3.17]{DDL16} implies that $\tau \to \hat u_\tau$ is maximal.
\end{proof}

\begin{lemma}\label{lem: balyage}
	Assume that $[0,+\infty) \ni t \mapsto u_t, v_t \in \PSH(X,\theta)$ is a  geodesic and a subgeodesic ray respectively, both having potentials with minimal singularity type, both  emana\-ting from $\phi$. For each $C>0$ let $t \to w_t^C$ be the upper envelope of all subgeodesic rays lying below $t \to \min(u_t,v_t+C)$. If $t\mapsto v_t$ is $t$-Lipschitz then
	\[
	[0,+\infty) \ni t \mapsto w_t := \textup{usc}\Big( \lim_{C\to +\infty} w_t^C\Big) \in \textup{PSH}(X,\theta)
	\]
	is a weak geodesic ray with minimal singularity type emanating from $\phi$. 
\end{lemma}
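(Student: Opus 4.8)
The plan is to show that $t \to w_t$ is simultaneously a subgeodesic ray and lies below every geodesic candidate in a suitable family, forcing it to be the geodesic; the heart of the matter is checking the correct boundary behavior at $t=0$ and at spatial infinity in the $z$-variable, so that $t \to w_t$ is genuinely a \emph{geodesic segment} on each $[0,\ell]$ and not just a subgeodesic.

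First I would record that each $w^C_t$ is a well-defined subgeodesic ray emanating from $\phi$: it is an upper envelope of subgeodesics bounded above by $\min(u_t, v_t + C)$, and since $u$ and $v$ are $t$-Lipschitz with minimal singularities, the function $t \to \min(u_t, v_t + C)$ dominates a genuine subgeodesic (e.g. $t \to \max(\phi - At, \text{something})$-type barriers as in Section 2.3), so $w^C_t \not\equiv -\infty$; moreover $w^C_0 = \phi$ because $\min(u_0,v_0+C) = \phi$ for $C\geq 0$ and $\phi$ itself is a subgeodesic. The complexifications $W^C$ are $\pi^*\theta$-psh on $X \times D_{+\infty}$, and $C \mapsto w^C_t$ is increasing and bounded above by $u_t$ (since $u_t$ is one of the competitors, being a geodesic hence subgeodesic lying below $\min(u_t,v_t+C)$ once $C$ is large — wait, more carefully: $u_t \leq \min(u_t, v_t+C)$ fails in general, so instead I bound $w^C_t \leq u_t$ directly because $u_t \geq \min(u_t,v_t+C)$ is false; rather I use $w^C_t \leq \min(u_t,v_t+C) \leq u_t$). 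Thus $\lim_{C\to\infty} w^C_t \leq u_t < +\infty$, so $w_t := \mathrm{usc}(\lim_C w^C_t)$ is a genuine $\theta$-psh function for each $t$, its complexification $W$ is $\pi^*\theta$-psh (increasing limits of psh functions, regularized), and $w_0 = \phi$. Hence $t \to w_t$ is at least a subgeodesic ray emanating from $\phi$ with minimal singularities (it is squeezed between the $t$-Lipschitz barriers and $u_t$).

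Next, the key point: on each fixed interval $[0,\ell]$ I must show $t \to w_t$ \emph{equals} the weak geodesic joining $w_0 = \phi$ and $w_\ell$. For the lower bound, $t \to w_t$ is a subgeodesic with the correct endpoint data, so it lies below that weak geodesic. For the upper bound I would use the standard $\varepsilon$-perturbation / maximality trick from \cite{RWN} and \cite{DDL16}: let $t \to g_t$ be the weak geodesic on $[0,\ell]$ from $\phi$ to $w_\ell$; for $\delta > 0$ the curve $t \to g_t + \delta(\text{affine in }t\text{ vanishing at }t=\ell) $, or rather a reparametrized/extended subgeodesic ray agreeing with $g$ on $[0,\ell]$ and continuing $t$-Lipschitz-ly, is a competitor dominated by $\min(u_t, v_t + C)$ for $C$ large (here I use that $g_t \leq u_t$, which follows since $u$ is itself a geodesic candidate above $g$ after matching endpoints — this needs the comparison $w_\ell \leq u_\ell$ already established — and that $g_t$, being $t$-Lipschitz, stays below $v_t + C$ once $C$ exceeds the relevant Lipschitz constants). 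Letting $C \to \infty$ gives $g_t \leq w_t$ on $[0,\ell]$, hence equality, so $t \to w_t$ restricted to $[0,\ell]$ is the weak geodesic segment. Since $\ell$ was arbitrary, $t \to w_t$ is a weak geodesic ray emanating from $\phi$.

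The main obstacle I anticipate is the bookkeeping in the $C \to \infty$ limit together with the upper semicontinuous regularization: one must verify that regularizing in $(t,x)$ (equivalently in $(z,x)$ on $X \times D_{+\infty}$) does not destroy the boundary value $w_0 = \phi$ nor the $t$-Lipschitz control, and that the increasing limit of the $w^C$ is still a \emph{geodesic} (i.e. has the right degenerate Monge--Amp\`ere behavior on $\mathrm{Amp}(\{\theta\}) \times D_\ell$) rather than merely a subgeodesic — this is exactly where the comparison with the genuine weak geodesic $g$, via the trapped-between argument above, is essential. I would also need Lemma \ref{lem: boundary limit of weak geodesic} to guarantee the uniform Lipschitz constant of $g$ depends only on $\phi, w_\ell$ so the choice of $C$ is legitimate.
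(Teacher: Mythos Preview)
Your overall architecture is close to the paper's, but the ``upper bound'' step contains a genuine gap. You want the geodesic segment $g_t$ on $[0,\ell]$ joining $\phi$ to $w_\ell$ to be (after extension) a competitor in the envelope defining $w_t^C$ for $C$ large, so that $g_t\le w_t^C\le w_t$. The difficulty is that $w_t^C$ is an envelope over subgeodesic \emph{rays} on $[0,+\infty)$, so you must extend $g$ past $t=\ell$ to a subgeodesic ray still dominated by $\min(u_t,v_t+C)$. The natural gluings fail: gluing with $w_t^C$ at $t=\ell$ gives a downward jump since $g_\ell=w_\ell\ge w_\ell^C$ (so the glued function is not $\pi^*\theta$-psh), while gluing with $w_t$ matches at $t=\ell$ but $w_t$ need not satisfy $w_t\le v_t+C$ for $t>\ell$ and fixed $C$. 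A linear continuation $g_\ell+A(t-\ell)$ either produces a concave kink (if $A$ is small) or eventually exceeds $v_t+C$ (if $A$ is large). Your phrase ``continuing $t$-Lipschitz-ly'' does not resolve this.

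The paper sidesteps the extension problem by taking the geodesic segment $\varphi_t$ on $[0,s]$ from $\phi=w_0^C$ to $w_s^C$ \emph{rather than to} $w_s$. Since $\varphi_s=w_s^C$ exactly and $\varphi_t\ge w_t^C$ on $[0,s]$, the gluing $\varphi_t$ on $[0,s]$, $w_t^C$ on $[s,+\infty)$ is a legitimate subgeodesic ray; for $C$ large it lies below $\min(u_t,v_t+C)=u_t$ on $[0,s]$ by the comparison principle for geodesics, and below $\min(u_t,v_t+C)$ for $t\ge s$ trivially. This shows $w_t^C$ is already a geodesic on $[0,s]$ for $C$ large. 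The limit $C\to+\infty$ is then handled not by a direct comparison with $g$, but by noting that $t\mapsto\AM(w_t^C)$ is affine, hence so is $t\mapsto\AM(w_t)$ (Proposition~\ref{prop: BEGZ convergence I and I1}), and then invoking the domination principle to conclude $w_t$ equals the geodesic from $w_0$ to $w_s$. If you want to salvage your direct comparison with $g$, you would in effect need stability of geodesic segments under \emph{increasing} limits of endpoints, which is not available here (Lemma~\ref{lem: stability of geodesic segments} treats decreasing sequences), and is precisely what the energy/domination argument replaces.
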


\begin{proof}
We first observe that the set of subgeodesic rays lying below $\min(u_t,v_t)$ is non empty. Indeed, since $u_t$ has minimal singularity type, it follows from \eqref{eq: m_M_normalization} that $u_t \geq \phi-Dt$ for some positive constant $D$. Hence the curve $t\mapsto v_t-Dt$ is a subgeodesic ray lying below $\min(u_t,v_t)$. We deduce, in particular, that $t \to w_t$ is a subgeodesic ray emanating from $\phi$ and $w_t$ has minimal singularity  for all $t\geq 0$.

	Fix $s>0$. We prove that the curve $[0,s] \ni t \mapsto w_t^C$ is actually a geodesic segment when $C$ is large enough. Indeed, let $[0,s] \ni t \mapsto \varphi_t \in \textup{PSH}(X,\theta)$ be the geodesic segment connecting $w_0^C$ and $w_s^C$ and extend $\varphi_t$ to $[0,+\infty)$, by setting $\varphi_t=w_t^C$, for $t\geq s$. By Proposition \ref{prop: DDL_comp_princ_geod} we have that $\varphi_t \geq w_t^C$ for all $t\geq 0$. It follows from basic properties of plurisubharmonic functions (see \cite[Proposition 1.30]{GZbook}) that $t \to \varphi_t$ thus constructed is a  subgeodesic ray.
    
For $C>0$ big enough we have $u_t \leq v_t+C$ for all $t \in [0,s]$, since these functions have minimal singularity type. In particular $w_s^C \leq  u_s$. Recall that $[0,s]\ni t \mapsto u_t$ is the geodesic segment connecting $\phi$ to $u_s$.  Thus, for such $C$, the comparison principle (Proposition \ref{prop: DDL_comp_princ_geod}) gives $\varphi_t  \leq u_t$, for all $t\in [0,s]$. Since $u_t\leq \min(u_t,v_t+C)$, for $t\in [0,s]$ it then follows that $\varphi_t\leq \min(u_t,v_t+C)$, for all $t\geq 0$. Therefore $\varphi_t$ is a candidate defining $t\mapsto w_t^C$. This implies that $\varphi_t \leq w_t^C$ for all $t\geq 0$, hence $\varphi_t = w_t^C$, for all $t\geq 0$ (in the previous paragraph we proved the reverse inequality).  In particular $w_t^C$ is a geodesic segment in $[0,s]$.  

Letting $C \to +\infty$, by convergence of $\AM$ along increasing sequences (see Proposition \ref{prop: BEGZ convergence I and I1}) one sees that, $[0, s]\ni t \to \AM(w_t)$ is affine (\cite[Theorem 3.12]{DDL16}). Now, let $[0,s]\ni t\to \phi_t \in \mathcal E^1$ be the geodesic segment joining $w_0$ and $w_s$. Then $\phi_t\geq w_t$ and $\AM(w_t)=\AM(\phi_t)$ for any $t\in [0,s]$. In particular, we have that $\int_X (\phi_t - w_t) \theta_{w_t}^n =0$. Hence, the domination principle (Proposition \ref{prop: domination}) reveals that $w_t = \phi_t$ for all $t \in [0,s]$, i.e., $t \to w_t$ is a geodesic ray. 
\end{proof}

Finally let us state and prove the big version of the main analytic result of \cite{RWN}:

\begin{coro}\label{cor: from test curve to ray} Let $\tau\rightarrow \psi_\tau$ be a test curve such that $\psi_{-\infty} =\phi$.
Define $$w_t= \sup_{\tau\in \mathbb{R}} (P[\psi_\tau](\phi) + t\tau), \ \ t \geq 0.$$ Then the curve $t\rightarrow w_t$ is a weak geodesic ray, with minimal singularity type, emanating from $\phi$.
\end{coro}
\begin{proof} To start, from  the first step in the proof of Proposition \ref{prop: maximization} we know that $\tau \to \chi_\tau := P[\psi_\tau](\phi)$ is a test curve, and from Lemma \ref{lem: uscify} it follows that $\check \chi_t = \check \psi^M_t$, for every $t\geq 0$. Moreover Proposition \ref{prop: maximization} insures that $\tau\rightarrow \psi_\tau^M$ is a maximal  $\tau$-usc test curve. By Theorem \ref{thm: from test curve to ray} above, $t \to \check \psi^M_t$ is a weak geodesic ray with minimal singularity type. Hence, so is $t \to \check \chi_t$.
\end{proof}

\let\OLDthebibliography\thebibliography 
\renewcommand\thebibliography[1]{
  \OLDthebibliography{#1}
  \setlength{\parskip}{1pt}
  \setlength{\itemsep}{1pt plus 0.3ex}
}

\vspace{0.1cm}
 
 \noindent{\sc University of Maryland}\\
{\tt tdarvas@math.umd.edu}\vspace{0.1in}, \href{http://www.math.umd.edu/~tdarvas/}{http://www.math.umd.edu/\~{}tdarvas}   \newpage
\noindent{\sc IHES}\\
{\tt dinezza@ihes.fr}, \href{https://sites.google.com/site/edinezza/home}{https://sites.google.com/site/edinezza/home}\vspace{0.1in}\\
 \noindent {\sc Universit\'e Paris-Sud}\\
{\tt hoang-chinh.lu@math.u-psud.fr}, \href{https://www.math.u-psud.fr/~lu/}{https://www.math.u-psud.fr/\~{}lu}

\end{document}